\newtheorem{theorem}{Theorem}
\newtheorem{lemma}{Lemma}
\newcommand{\R}{{\mathbb R}}
\newcommand{\Z}{{\mathbb Z}}
\newcommand{\C}{{\mathbb C}}
\newcommand{\set}[2]{ \left\{ #1 \ \left| \ #2 \right. \right\} }
\newcommand{\ang}[1]{\left< #1 \right>}
\newcommand{\diam}{{\mathrm{diam}~\!}}
\newcommand{\threesp}{{\mathcal M}_3}
\newcommand{\onesp}{{\mathcal M}}
\newcommand{\Lsp}{{\mathcal L}}
\newcommand{\Rsp}{{\mathcal R}}
\newcommand{\inc}{{\mathcal E}}
\newcommand{\dist}{{\mathrm{dist}}}
\title{Generalized curvature for certain Radon-like operators of intermediate dimension}
\author{Philip T. Gressman\footnote{Partially supported by NSF Grant DMS-1361697.}}
\begin{document}

\maketitle

\begin{abstract}
This paper establishes $L^p$-improving estimates for a variety of Radon-like transforms which integrate functions over submanifolds of intermediate dimension. In each case, the results rely on a unique notion of curvature which relates to, but is distinct from, Phong-Stein rotational curvature. The results obtained are sharp up to the loss of endpoints. The methods used are a new adaptation of the familiar method of inflation developed by Christ and others. Unlike most previous instances of this method, the present application does not require any particular linear algebraic relations to hold for the dimension and codimension.
\end{abstract}

\tableofcontents

\section{Introduction}
\subsection{Quadratic model cases}

The purpose of this paper is to develop broadly-applicable methods to understand the $L^p-L^q$ mapping properties of geometric averaging operators which are truly intermediate-dimensional, meaning that they are not well-understood by existing arguments applicable to either curves or hypersurfaces. The methods are based on an $L^p$-adapted version of the method of $TT^*T$, which is itself derived using ideas from Christ's method of inflation, first introduced in the study of the corkscrew curve \cite{christ1998}. The resulting arguments can be successfully applied in a wide variety of settings, in contrast to most earlier work in intermediate dimensions, which tends to be limited to analyses of isolated special cases. Here a wide variety of cases means, for example, that there are no combinatorial constraints on the dimensions and codimensions of the manifolds involved.  More precisely, the methods developed here were specifically intended to overcome the fairly common problem in inflation-type arguments in which certain integer quantities related to dimension need to have some precise factorization properties in order to proceed. Some important special cases whose analysis is unified by this new common framework include the following operators:
\begin{itemize}
\item Convolution with the maximal quadratic surface in $\R^5$: For every $t := (t_1,t_2) \in \R^2$, let $\gamma_M(t) := (t_1,t_2, t_1^2, 2t_1 t_2, t_2^2)$ and define
\begin{equation} R f(x) := \int_{[-1,1]^2} f( x + \gamma_M(t)) dt \label{modelmaxml} \end{equation}
for $x \in \R^5$ and $f$ any sufficiently regular function on $\R^5$. This operator belongs to an explicit family studied by Ricci \cite{ricci1997} using Fourier-analytic methods, in contrast to the geometric-combinatorial methods used here.
\item Convolution with the maximal quadratic surface in $\C^5$: For every $t := (t_1,t_2) \in \C^2$, let $\gamma_{M_\C}(t) := (t_1,t_2, t_1^2, 2t_1 t_2, t_2^2)$ and define
\begin{equation} R_\C f(x) := \int_{|t_1| + |t_2| \leq 1} f( x + \gamma_{M_C}(t)) |dt \wedge d \overline{t}| \label{modelmaxmlc} \end{equation}
for $x \in \C^5$ and $f$ any sufficiently regular function on $\C^5$.
\item Convolution with the harmonic quadratic $3$-surface in $\R^8$: For $t  \in \R^3$, let $\gamma_H(t) := (t_1,t_2,t_3, t_1^2 - t_2^2, t_2^2 - t_3^2 , 2 t_1 t_2, 2 t_2 t_3, 2 t_1 t_3)$. Define
\begin{equation} S f(x) := \int_{[-1,1]^3} f(x + \gamma_H(t)) dt \label{modelharmonic} \end{equation}
for $x \in \R^8$ and all sufficiently regular functions $f$ on $\R^8$.
\item Asymmetric averages over half-dimensional subspaces of $\R^{2n}$: For any fixed natural number $n \geq 1$, define
\begin{equation} T f (y,t) := \int_{[-1,1]^n} f(x, y + t x) dx \label{modelasym} \end{equation}
for $y \in \R^n$, $t \in \R$, and $f$ any sufficiently regular function on $\R^{n} \times \R^{n}$.
\end{itemize}
These four examples were explicitly chosen to demonstrate that, indeed, no specific affine linear relationship must hold among the dimensions of the input variables, output variables, and integration variables. In each case, the full range of $L^p-L^q$ estimates may be attained except for the extremal cases:
\begin{theorem}
Regarding the operators given by \eqref{modelmaxml}--\eqref{modelasym}:  \label{modelthm}
\begin{itemize}
\item The operator  \eqref{modelmaxml} is bounded from $L^p(\R^5)$ into $L^q(\R^5)$ provided that $(\frac{1}{p}, \frac{1}{q})$ is in the interior of the triangle with vertices $(0,0)$, $(1,1)$, and $( \frac{5}{8}, \frac{3}{8})$.
\item The operator  \eqref{modelmaxmlc} is bounded from $L^p(\C^5)$ into $L^q(\C^5)$ provided that $(\frac{1}{p}, \frac{1}{q})$ is in the interior of the triangle with vertices $(0,0)$, $(1,1)$, and $( \frac{5}{8}, \frac{3}{8})$.
\item The operator \eqref{modelharmonic} is bounded from $L^p(\R^8)$ into $L^q(\R^8)$ provided that $(\frac{1}{p}, \frac{1}{q})$ is in the interior of the triangle with vertices $(0,0)$, $(1,1)$, and $(\frac{8}{13}, \frac{5}{13})$.
\item For any compact set $\Omega \subset \R^{n+1}$, the operator \eqref{modelasym} maps $L^p(\R^{2n})$ into $L^q(\Omega)$ provided that $(\frac{1}{p}, \frac{1}{q})$ is in the interior of the triangle with vertices $(0,0)$, $(1,1)$, and $(\frac{n+1}{n+2}, \frac{n}{n+2})$. 
\end{itemize}
In each case, if $(\frac{1}{p},\frac{1}{q})$ lies outside the closure of the indicated triangle and $q > p$, then no such inequality holds.
\end{theorem}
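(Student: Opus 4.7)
In each case the plan is to reduce to an endpoint restricted-weak-type estimate at the non-diagonal vertex of the claimed triangle, and then to fill in the interior by real interpolation with the trivial $L^1 \to L^1$ and $L^\infty \to L^\infty$ bounds at the other two vertices. I would handle the necessity assertion first, via standard Knapp-type test functions adapted to the anisotropic scaling of each $\gamma$. For \eqref{modelasym}, for instance, taking $f = \chi_{[-1,1]^n \times B_\delta(0)}$ yields $\|f\|_p \approx \delta^{n/p}$ and $\|Tf\|_q \approx \delta^{(n+1)/q}$, so the inequality $\|Tf\|_q \lesssim \|f\|_p$ forces $(n+1)/q \geq n/p$; the vertex $(\frac{n+1}{n+2}, \frac{n}{n+2})$ lies precisely on the corresponding line. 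Analogous tubes adapted to the quadratic scalings of $\gamma_M$, $\gamma_{M_\C}$, and $\gamma_H$ produce the matching oblique edge in the other three cases, while the second oblique edge is obtained by duality (or by a complementary test function), and the diagonal edge $1/p = 1/q$ is forced by convolution invariance for \eqref{modelmaxml}--\eqref{modelharmonic} and by a trivial truncation argument for \eqref{modelasym}.

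For sufficiency, testing the bilinear form $\langle Rf, g \rangle$ on indicators $f = \chi_E$ and $g = \chi_F$ reduces each endpoint estimate to a combinatorial incidence bound of the form
\[ \bigl| \{ (x, t) : x \in F, \ x + \gamma(t) \in E \} \bigr| \lesssim |E|^{1/p^\ast} |F|^{1 - 1/q^\ast}. \]
The strategy, as outlined in the introduction, is to apply the new $L^p$-adapted $TT^\ast T$ framework: after a pigeonhole refinement to uniform fibers, one iterates the operator with its adjoint to produce configurations consisting of a base point together with a $k$-tuple of parameters, and one bounds the image of these configurations under a natural parametrization map by a change of variables. The generalized curvature condition developed in the paper should then amount to a quantitative nonvanishing statement for the Jacobian of this map, and the desired incidence bound follows once that Jacobian has the right homogeneity to match the putative vertex.

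The substantive remainder is to verify the curvature condition for each specific $\gamma$. For \eqref{modelmaxml} and \eqref{modelmaxmlc} the second-order components form a complete system of quadratic monomials in two variables, so $k$-tuples of parameter values should be recoverable from symmetric functions of their images up to a discriminant-type factor; for \eqref{modelharmonic} the five quadratic components span the space of traceless symmetric forms on $\R^3$ and the Jacobian should reduce to a Vandermonde-like polynomial that is generically nonzero; for \eqref{modelasym} the bilinear form of $y + t x$ makes the inflation step essentially a linear-algebra statement about $k$-tuples of points in $\R^n$ together with their affine hull. The principal obstacle, and the reason the paper's framework is needed, is that a classical inflation argument would require an integer $k$ for which $k$ times the parameter dimension matches the ambient dimension, a constraint that none of these four examples satisfies exactly. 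Accommodating this mismatch through the $L^p$-adapted iteration, which effectively permits a fractional $k$, while retaining quantitative control of the generalized curvature Jacobian, is the step I expect to demand the most technical care.
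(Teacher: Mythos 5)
There is a genuine gap in the sufficiency half of your plan. Your sketch ultimately rests on the classical inflation paradigm: iterate $T$ and $T^*$, view the resulting configuration map as a (possibly singular) change of variables, and win by showing its Jacobian is quantitatively nonvanishing. But for every one of the four operators the configuration map arising from $TT^*T$ (or from any alternating composition) is \emph{overdetermined} --- the number of parameters exceeds the dimension of the target --- so there is no change of variables to perform and no Jacobian in the usual sense whose ``homogeneity'' could be matched to the vertex. You acknowledge the dimension mismatch but propose to absorb it by an ``$L^p$-adapted iteration which effectively permits a fractional $k$''; that is not a construction, and it is precisely the point at which the actual argument must diverge from Christ's method. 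What is needed instead (and what the paper supplies) is: (i) a single generalized $TT^*T$ inequality obtained from one trivial refinement step (the sets $F'$, $G'$ of \eqref{fset}--\eqref{gset}), reducing everything to the trilinear form \eqref{quadlinear} on the incidence manifold $\threesp$; (ii) a coarea-type formula \eqref{coarea} on $\threesp$ with the explicit singular weight $1/\mathcal K$, where $\mathcal K$ in \eqref{keq} is built from the curvature maps ${\mathcal C}^l_p$, ${\mathcal C}^r_p$; (iii) a splitting of $\threesp$ according to whether $\Phi = \mathcal K / \dist^{\kappa}$ is large or small, the large part being controlled by the regularity of the fiber measures $\mu_0$ (this is where real analyticity and Gabrielov's theorem enter, replacing B\'ezout), and the small part reducing to a \emph{geometric sublevel set operator} \eqref{sublevels}; and (iv) restricted weak-type bounds for that sublevel set operator, proved by decomposing $m^l$ and $m^r$ into dyadic annuli about $m^c$ and applying the Marcinkiewicz-style Lemma \ref{sublevellemma}. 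The exponents at the vertex then come out of the algebra of Lemma \ref{mainlemma}, $q_L^{-1} = (2-\epsilon+(sp_l')^{-1})/(3+(sp_l')^{-1}+(sp_r')^{-1})$, not from a homogeneity count of a Jacobian. None of steps (ii)--(iv) appears in your plan, and they are the substance of the proof.

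Your treatment of necessity is essentially correct and matches the paper's Knapp computation (your test function for \eqref{modelasym} gives the right constraint $(n+1)/q \geq n/p$, and the vertex saturates it), and your closing interpolation with the trivial $L^1\to L^1$ and $L^\infty\to L^\infty$ bounds is how the interior of the triangle is filled in. Two smaller omissions: for \eqref{modelmaxml}--\eqref{modelharmonic} the operators act globally on $\R^5$, $\C^5$, $\R^8$, so after proving the local (compact $\Omega$) estimate you still need the translation-invariance-plus-H\"older patching argument to remove compactness, which is possible exactly because $q_L^{-1}+q_R^{-1}\ge 1$ at the vertex; and the specific ``curvature verifications'' you describe (discriminants, Vandermonde factors, symmetric functions of $k$-tuples) are not the computations that actually occur --- what must be computed is $\mathcal K$, e.g.\ $\mathcal K(p) = (\|t^l-t^c\|^2+\|t^r-t^c\|^2)^{1/2}\,|\det(t^l-t^c\ \ t^r-t^c)|$ for \eqref{modelmaxml}, followed by the two-parameter sublevel set bound \eqref{maxq1}.
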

A consequence of Theorem \ref{modelthm} is that the submanifolds associated to \eqref{modelmaxml}--\eqref{modelharmonic} are model surfaces in the sense of Ricci \cite{ricci1997} and Oberlin \cite{oberlin2008}.

\subsection{Geometrically-formulated results}

Another important sense in which the methods developed here apply broadly is that they do not rely heavily on rigid algebraic properties of the family of submainfolds.
All of the results above will be formulated via the vector field geometry approach as employed by Christ, Nagel, Stein, and Wainger \cite{cnsw1999} and Tao and Wright \cite{tw2003}, among others. Specifically, this means that one uses duality to study the bilinear versions of the operators \eqref{modelmaxml}--\eqref{modelasym} and then analyzes the geometry of the associated projections. In this general setting,
let $\Lsp$ and $\Rsp$ be real-analytic Riemannian manifolds of dimension $n_L$ and $n_R$, respectively, with corresponding Riemannian measures of smooth density $d \mu_\Lsp$ and $d \mu_\Rsp$.
 In the product space $\Lsp \times \Rsp$, suppose that there is a real-analytic submanifold $\onesp$ of dimension $n_L+n_R-\ell$, and let $\pi_L : \onesp \rightarrow \Lsp$ and $\pi_R : \onesp \rightarrow \Rsp$ be the canonical projections onto the first and second factors of the product space $\Lsp \times \Rsp$, respectively. The differentials $d \pi_L$ and $d \pi_R$ are assumed to be everywhere surjective on $\onesp$ and have kernels $\ker d \pi_L$ and $\ker d \pi_R$ whose intersection at each point is trivial. It is also assumed that $\onesp$ has a measure of smooth density $d \mu_{\mathcal M}$ which is dominated by the Riemannian measure, meaning that the quantity $\mu_\onesp(V_1,\ldots,V_{n_L+n_R-\ell})$ at the point $m \in \onesp$ for any $(n_L+n_R-\ell)$-tuple of vectors tangent to $\onesp$ is bounded by a uniform constant (independent of $m$) times the volume of the parallelepiped generated by $V_1,\ldots,V_{n_L+n_R-\ell}$ in the tangent space of $\Lsp \times \Rsp$ at $m$ as measured by the Riemannian metric on $\Lsp \times \Rsp$.

In this geometric setting, the object of study is  the familiar bilinear form
\begin{equation} B (f,g) :=  \int_{\mathcal M} \left( f \circ \pi_L \right) \left( g \circ \pi_R \right) d \mu_{\onesp} \label{bilinear} \end{equation}
which we initially define for all nonnegative Borel measurable functions 
$f$ and $g$ on $\Lsp$ and $\Rsp$, respectively.  The analysis of \eqref{bilinear} focuses on the vector fields $X_L^1,\ldots,X_L^{d_L}$ and $X_R^1,\ldots,X_R^{d_R}$ which form bases of the kernels of $d \pi_L$ and $d \pi_R$, respectively. It is known by results of Christ, Nagel, Stein, and Wainger \cite{cnsw1999} that the bilinear form \eqref{bilinear} can satisfy nontrivial estimates only when the algebra generated by the vector fields $X_L^i$ and $X_R^j$ and all their iterated Lie brackets spans the tangent space of $\onesp$ at every point (this is the so-called H\"{o}rmander condition). However, for most choices of a triple $(n_L,n_R,\ell)$, it is not known exactly what refinement of this spanning condition gives rise to the largest-possible set of $L^p$-$L^q$ estimates.  In the special case $n_L = n_R=n$ and $\ell = 1$, for example, the Phong-Stein rotational curvature condition fills exactly this role. In particular, when $n_L = n_R = n$ and $\ell =1$, the bilinear form \eqref{bilinear} will satisfy the full range of possible $L^p-L^q$ inequalities exactly when
 for any nonzero $v = (v_1,\ldots,v_{n-1}) \in \R^{n-1}$, there is a nonzero $w  \in \R^{n-1}$ such that the set
\begin{equation} \left\{ X_L^1,\ldots,X_L^{n-1},X_R^1,\ldots,X_R^{n-1}, \left[ v \cdot X_L, w \cdot X_R \right] \right\} \label{psrc} \end{equation}
spans the tangent space of $\onesp$ at every point, where, e.g., $v \cdot X_L := \sum_{i=1}^{n-1} v_i X_L^i$.  This is a substantially stronger criterion than the H\"{o}rmander condition because it does not consider the role of higher commutators and it guarantees that there are, in fact, many different ways to find spanning sets.

In general, it is not at all obvious what the natural analogue is of the condition \eqref{psrc} in higher codimensions. There are, however, two new cases, corresponding to the geometric analogues of \eqref{modelmaxml} and \eqref{modelasym}, which the present approach makes it possible to identify:
\begin{theorem}
Suppose that $\onesp$, $\Lsp$, and $\Rsp$ are as defined above and that $\onesp$ is  seven-dimensional and both $\Lsp$ and $\Rsp$ are five-dimensional. Suppose that $X_L^1$ and $X_L^2$ span $\ker d \pi_L$ and likewise for $X_R^1$ and $X_R^2$. Then suppose also that the following curvature conditions hold:
\begin{itemize}
\item For any linearly independent $v,v' \in \R^2$, there is a $w \in \R^2$ such that
\begin{equation*} \left\{ X_L^1,  X_L^2 ,  X_R^1,  X_R^2,  [ X_L^1, v \cdot X_R],  [X_L^2, v \cdot X_R] ,  [w \cdot X_L, v' \cdot X_R] \right\}
\end{equation*}
spans the tangent space of $\onesp$ at every point.
\item For any linearly independent $u,u' \in \R^2$, there is a $w \in \R^2$ such that
\begin{equation*} \left \{ X_L^1,  X_L^2,  X_R^1 , X_R^2,  [u \cdot X_L, X_R^1] ,  [u \cdot X_L, X_R^{2}] ,  [u' \cdot X_L, w \cdot X_R] \right\}
\end{equation*}
spans the tangent space of $\onesp$ at every point.
\end{itemize}
(Note that these conditions are independent of the choice of bases.)
Then for any compact $\Omega \subset \onesp$, we have that \label{variable5}
\[ \left| \int_{\Omega} f( \pi_L(m)) g(\pi_R(m)) d \mu_\onesp(m) \right| \leq C ||f||_{L^{q_L}(\Lsp)} ||g||_{L^{q_R}(\Rsp)} \]
for some $C<\infty$ and all $f \in L^{q_L}$ and $g \in L^{q_R}$ provided that $(\frac{1}{q_L}, \frac{1}{q_R})$ belongs to the interior of the triangle with vertices $(1,0), (0,1)$, and $(\frac{5}{8}, \frac{5}{8})$. Furthermore, no such estimate can hold when $(\frac{1}{q_L},\frac{1}{q_R})$ lies outside the closure of this triangle  when $\frac{1}{q_L} + \frac{1}{q_R} \geq 1$.
\end{theorem}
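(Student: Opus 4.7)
The plan is to establish sharpness by scaling and testing, and then to prove the positive direction by interpolation between the easy endpoints $(1,0)$ and $(0,1)$ and a restricted weak-type estimate at the vertex $(5/8,5/8)$. For sharpness, I would test $B$ against indicator functions of anisotropic quasi-balls adapted to the natural osculating scaling of the quadratic model \eqref{modelmaxml}, namely sets $E$ of dimensions $\lambda\times\lambda\times\lambda^2\times\lambda^2\times\lambda^2$. Such a set has Riemannian volume $\sim \lambda^8$, while the measure of configurations $(x,x+\gamma_M(t))\in\onesp$ with both endpoints in $E$ is $\sim\lambda^{10}$. Forcing $B(\mathbf{1}_{E},\mathbf{1}_{E})\leq C|E|^{1/q_L}|E|^{1/q_R}$ and letting $\lambda\to 0$ yields the necessary condition $1/q_L + 1/q_R\leq 5/4$, which combined with the trivial $1/q_L+1/q_R\geq 1$ confines the admissible region to the stated triangle. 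In the general geometric setting, one checks that the stated curvature conditions give rise to the same scaling picture locally at each point of $\onesp$.

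For the positive direction, the restricted weak-type bound $B(\mathbf{1}_E,\mathbf{1}_F)\leq C(|E||F|)^{5/8}$ (permitting logarithmic losses which are absorbed upon interpolating away from the endpoint) would be established by the $L^p$-adapted $TT^*T$/inflation argument advertised in the introduction. The idea is to raise $B(\mathbf{1}_E,\mathbf{1}_F)$ to a suitable integer power $N$ and express $B(\mathbf{1}_E,\mathbf{1}_F)^N$ as an iterated integral over configurations of points $m_1,\dots,m_N\in\pi_L^{-1}(E)\cap\pi_R^{-1}(F)\subset\onesp$ that are successively related by alternating flows along $\ker d\pi_L$ and $\ker d\pi_R$. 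Each such flow preserves membership in $E$ (for $\ker d\pi_L$) or in $F$ (for $\ker d\pi_R$), and so the configuration map admits a reparameterization whose Jacobian is precisely the quantity controlled by the curvature hypotheses.

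The two stated curvature conditions are dual under the symmetry $\Lsp\leftrightarrow\Rsp$ and correspond to the two natural orderings of an inflation pattern built from first-order commutators. In each case, the four kernel fields together with the three bracket fields $[X_L^1,v\cdot X_R]$, $[X_L^2,v\cdot X_R]$, $[w\cdot X_L,v'\cdot X_R]$ (or the dual triple) span $T\onesp$ everywhere on a compact piece of $\onesp$; this spanning hypothesis provides the quantitative lower bound on the $7$-parameter configuration Jacobian that is needed to convert the iterated integral into an integral of bounded functions over a bounded region of $\onesp$. Both conditions are needed because either $|E|$ or $|F|$ may dominate, and they combine by a case split to furnish the symmetric vertex $(5/8,5/8)$; the specific exponent $5/8$ arises from $\dim\Lsp/\dim\onesp=5/7$ corrected by the kernel/bracket bookkeeping in the inflation.

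The main obstacle is implementing the $L^p$-adapted variant of the inflation argument without the divisibility or integrality constraints on the dimensional data that typically constrain Tao-Wright style proofs, and in particular verifying that the purely qualitative spanning hypothesis of Theorem \ref{variable5} produces the correct quantitative Jacobian lower bound uniformly on compact subsets of $\onesp$. Tying this lower bound to the exponent bookkeeping so as to produce exactly $5/8$, rather than some nearby but weaker exponent, is where I expect the bulk of the genuinely new analytic work to reside.
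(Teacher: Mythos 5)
Your high-level outline (interpolate the trivial endpoints against a restricted weak-type bound at $(\tfrac58,\tfrac58)$; use the curvature hypotheses to control a Jacobian arising from an iterated composition) matches the paper's strategy in spirit, but both halves of the proposal have concrete gaps. On sharpness: the single symmetric scaling test with a box of dimensions $\lambda\times\lambda\times\lambda^2\times\lambda^2\times\lambda^2$ only yields $\frac{1}{q_L}+\frac{1}{q_R}\leq\frac54$, and intersecting this with $\frac{1}{q_L}+\frac{1}{q_R}\geq 1$ gives a strip, not the stated triangle (e.g.\ $(\tfrac34,\tfrac12)$ lies in that strip but outside the triangle). The two edges of the triangle through $(1,0)$ and $(0,1)$ come from the \emph{asymmetric} Knapp examples: take $F_\epsilon$ a ball of radius $\epsilon$ in $\Lsp$, pull back to $M_\epsilon=\pi_L^{-1}F_\epsilon$, and push forward to $G_\epsilon=\pi_R(M_\epsilon)$, which has measure $\approx\epsilon^{\ell}=\epsilon^3$ rather than $\epsilon^5$; this yields $\frac{1}{q_L}+\frac{3}{5}\frac{1}{q_R}\leq 1$ and, by symmetry, its reflection. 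Without these your necessary conditions do not match the claimed region.

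On the positive direction, the mechanism you sketch --- raise $B$ to an integer power $N$ and reparameterize the configuration of alternating flows so that its Jacobian is the curvature quantity --- is precisely the step that cannot be carried out here, and avoiding it is the point of the paper. With $d_L=d_R=2$ and $\dim\onesp=7$ (odd), no alternating composition of $2$-parameter flows produces a square Jacobian matching $\dim\onesp$ or $\dim\Lsp=\dim\Rsp=5$; the configuration map is unavoidably overdetermined. The paper instead uses only a single $TT^*T$ composition (Lemma \ref{ttstartlem}), whose associated projection $\Pi:\threesp\to\Lsp\times\Rsp$ has fibers of positive dimension $\kappa=d_L+d_R-\ell=1$. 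It then replaces the change of variables by a coarea formula \eqref{coarea} weighted by the generalized curvature ${\mathcal K}$ of \eqref{keq}, splits according to whether the normalized quantity $\Phi$ of \eqref{phieq} is large or small, and reduces the small-$\Phi$ piece to the bilinear sublevel set operator \eqref{sublevels}. That operator is then bounded using exponential coordinates $p(m^c,u,v)$, the Taylor expansion \eqref{commapp} identifying ${\mathcal C}^l_p,{\mathcal C}^r_p$ with first commutators (which is where your two spanning hypotheses enter, one for the $u$-sublevel bound and one for the $v$-sublevel bound), dyadic decomposition into annuli, and the interpolation Lemma \ref{sublevellemma}; real analyticity is needed (via Gabrielov's theorem) to control the fiber measures $\mu_0$. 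None of this appears in the proposal, and the part you defer as ``where the bulk of the work resides'' is exactly the part where your proposed route breaks down.
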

\begin{theorem}
Let $\mathcal M$, $\Lsp$, and $\Rsp$ as identified at the beginning of this section have the properties that $\onesp$ is $(2d_R+1)$-dimensional and that $\Lsp$ and $\Rsp$ are $2d_R$-dimensional and $(d_R+1)$-dimensional, respectively. Let $X_L$ be a nonvanishing vector field in the kernel of $d \pi_L$, and let $X_R^1,\ldots,X_R^{d_R}$ be vector fields spanning $\ker d \pi_R$. Suppose that
\[ \left\{ X_L ,  X_R^1 , \ldots,  X_R^{d_R} , [X_L, X_R^1] , \ldots,  [X_L, X_R^{d_R}]  \right\}  \]
spans the tangent space at every point of $\onesp$. (Note once again that the condition is independent of the choice of basis.)
Then for any compact $\Omega \subset \onesp$, \label{variableasym}
\[ \left| \int_{\Omega} f( \pi_L(m)) g(\pi_R(m)) d \mu_\onesp(m) \right| \leq C ||f||_{L^{q_L}(\Lsp)} ||g||_{L^{q_R}(\Rsp)} \]
for some $C < \infty$ and all $f \in L^{q_L}$ and $g \in L^{q_R}$ provided that $(\frac{1}{q_L}, \frac{1}{q_R})$ belongs to the interior of the triangle with vertices $(1,0),$  $(0,1)$, and $(\frac{d_R+1}{d_R+2}, \frac{2}{d_R+2})$. No such estimates can hold outside the closure of the triangle when $\frac{1}{q_L} + \frac{1}{q_R} \geq 1$.
\end{theorem}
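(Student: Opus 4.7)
The plan is to establish the restricted weak-type endpoint estimate at the vertex $(\frac{1}{q_L}, \frac{1}{q_R}) = (\frac{d_R+1}{d_R+2}, \frac{2}{d_R+2})$ and to recover the interior of the triangle by interpolation against the trivial endpoints at $(1,0)$ and $(0,1)$. After duality, this endpoint estimate becomes
\[ \mu_\onesp(S)^{d_R+2} \lesssim |E|^{d_R+1} |F|^{2}, \qquad S := \Omega \cap \pi_L^{-1}(E) \cap \pi_R^{-1}(F). \]
The vertex can be checked for consistency against a Carnot-Carath\'eodory $\delta$-ball $B_\delta$ adapted to the spanning set $\{X_L, X_R^j, [X_L, X_R^j]\}$: its $\onesp$-measure is $\sim \delta^{3d_R+1}$ (the homogeneous dimension of the associated two-step nilpotent structure) while $|\pi_L(B_\delta)| \sim \delta^{3d_R}$ and $|\pi_R(B_\delta)| \sim \delta^{2d_R+1}$, and a direct computation shows $3d_R+1 = \tfrac{d_R+1}{d_R+2}(3d_R) + \tfrac{2}{d_R+2}(2d_R+1)$, in agreement.

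A standard Christ-style refinement argument reduces matters to a subset $S^* \subset S$ of comparable measure on which the $\pi_L$-slice measure $\beta := |S^* \cap \pi_L^{-1}(x)|$ and $\pi_R$-slice measure $\alpha := |S^* \cap \pi_R^{-1}(y)|$ are essentially constant on the respective supports, so that $\mu_\onesp(S^*) \sim \beta|E| \sim \alpha|F|$. Substituting these identities, the endpoint inequality reduces algebraically to the Loomis-Whitney-like geometric inequality
\[ \beta^{d_R+1} \alpha^{2} \lesssim \mu_\onesp(S^*). \]
I would establish this by an $L^p$-adapted inflation map $\Phi$ built from an interleaved sequence of flows along $X_L$ (each contributing a factor of order $\beta$) and along combinations of the $X_R^j$'s (each contributing a factor of order $\alpha$), chosen so that the pushed-forward field $(\exp(sX_L))_*X_R^j = X_R^j + s[X_L, X_R^j] + O(s^2)$ introduces the commutator directions into the Jacobian. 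Under the spanning hypothesis, the wedge $X_L \wedge X_R^1 \wedge \cdots \wedge X_R^{d_R} \wedge [X_L,X_R^1] \wedge \cdots \wedge [X_L,X_R^{d_R}]$ is bounded below in norm on compact subsets of $\Omega$, and this bound, tracked together with a Carnot prefactor of the form $|s|^{d_R}$ coming from the degree-2 scaling of the commutator directions, provides the Jacobian control needed to change variables into a product target whose ambient volume is majorized by $|E|^{d_R+1}|F|^2$.

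Necessity on the line $\frac{1}{q_L} + \frac{1}{q_R} = 1$ outside the triangle is obtained from the same Carnot ball family, together with two anisotropic limits in which either the first-layer or the second-layer scales are compressed, each saturating a different edge through $(\frac{d_R+1}{d_R+2},\frac{2}{d_R+2})$. The principal obstacle is the combinatorial design of the inflation: because $X_L$ is a single vector field, successive $X_L$-flows commute and cannot independently contribute $\beta$-factors unless they are separated by $X_R$-flows in a structured pattern that forces the commutator to appear. A naive two-step inflation with one $X_L$-flow followed by several independent $X_R$-flows simply reduces to a Fubini identity and yields only the refinement tautology $\mu(S^*) \sim \alpha|F|$; overcoming this is precisely the interleaving-and-Jacobian bookkeeping that the paper identifies as its contribution relative to earlier inflation arguments that require rigid dimensional factorizations.
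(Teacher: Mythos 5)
Your overall route --- a direct Christ-style refinement plus inflation, rather than the paper's generalized $TT^*T$ lemma, coarea formula on the incidence manifold, and sublevel-set operator --- is a legitimate and in some ways cleaner alternative for this particular theorem, and your preparatory reductions are correct: the homogeneous-dimension consistency check at the vertex, the two-parameter refinement giving $\mu_\onesp(S^*)\sim\beta|E|\sim\alpha|F|$, and the identification of the key inequality $\beta^{d_R+1}\alpha^2\lesssim\mu_\onesp(S^*)$ all hold up. The gap is in the design and justification of the inflation itself. You propose $d_R+1$ interleaved $X_L$-flows, ``each contributing a factor of order $\beta$,'' \emph{and} a Jacobian prefactor $|s|^{d_R}$; this is either double counting or, if taken literally, a map from a $(3d_R+1)$-dimensional parameter space into the $(2d_R+1)$-dimensional $\onesp$. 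Such an overdetermined map has no Jacobian in the usual sense, and ``changing variables into a product target whose ambient volume is majorized by $|E|^{d_R+1}|F|^2$'' is dimensionally incoherent (that product of measures is the right-hand side of the restricted weak-type bound, not the volume of any target your map can land in). Handling overdetermined inflations is precisely the hard technical content of the paper --- it requires the coarea formula together with the uniform regularity of fiber measures proved via Gabrielov's theorem in the appendix --- and your proposal neither supplies that machinery nor explains how to avoid it.

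In fact it can be avoided here, but by the opposite choice from the one you make: the correct inflation is the single alternating composition $\Psi(t^1,s,t^2):=\exp(t^2\cdot X_R)\exp(sX_L)\exp(t^1\cdot X_R)(m_0)$, which has exactly $2d_R+1$ parameters mapping into $\onesp$ and whose Jacobian is comparable to $|s|^{d_R}\bigl|X_L\wedge X_R^1\wedge\cdots\wedge X_R^{d_R}\wedge[X_L,X_R^1]\wedge\cdots\wedge[X_L,X_R^{d_R}]\bigr|\gtrsim|s|^{d_R}$ by the spanning hypothesis. All $d_R+1$ powers of $\beta$ then come from the single $X_L$-parameter via $\int_{E_s}|s|^{d_R}\,ds\gtrsim\beta^{d_R+1}$ for any set $E_s$ of measure at least $\beta$ (bathtub principle); no further $X_L$-flows are needed, and your closing worry about commuting $X_L$-flows is a red herring --- the three-step $X_R$--$X_L$--$X_R$ pattern (exactly the $TT^*T$ structure the paper formalizes) is not a Fubini tautology. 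Two points still need an argument even in this corrected form: (i) bounded multiplicity of $\Psi$, which fails to follow from the inverse function theorem where $s$ is near $0$ and must be handled by a dyadic decomposition in $|s|$ or by the real-analyticity hypothesis; and (ii) the necessity claim, where your ``anisotropic limits'' of Carnot balls are asserted but not constructed --- the paper's isotropic Knapp family already yields both constraints $\frac{1}{q_L}+\frac{\ell}{n_L}\frac{1}{q_R}\leq 1$ and $\frac{\ell}{n_R}\frac{1}{q_L}+\frac{1}{q_R}\leq 1$, which is what you should use. As written, the proposal identifies the right target inequality but does not contain a proof of it.
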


\subsection{Background}

Interest in $L^p-L^q$ mapping properties of geometric averaging operators can be found in the literature as early as the 1970s in work of Strichartz \cite{strichartz1970} and Littman \cite{littman1971} on the wave equation. As work progressed on a number of different problems, the first major unification of the field came through the work of Phong and Stein \cites{ps1986I,ps1986II,ps1991}, who identified the so-called Phong-Stein rotational curvature condition. Rotational curvature is nonvanishing, for example, when averaging over a translation-invariant family of hypersurfaces with nonvanishing Gaussian curvature. At the time, Phong and Stein were interested in geometric generalizations of the classical Calder\'{o}n-Zygmund singular integral theory. 
This work progressed through the efforts of various authors to reach significant milestones in the work of Christ, Nagel, Stein, and Wainger \cite{cnsw1999} and the more recent advances of Street and Stein \cites{ss2011,ss2012,ss2013} and Street \cite{street2012}. In the breakthrough paper \cite{cnsw1999}, the authors developed a sharp qualitative geometric nondegeneracy condition which sufficed for the study of singular integrals, but in this area of the literature, the quantitative relationship between more refined nondegeneracy criteria and quantitative mapping properties of nonsingular operators is not a primary concern. 

Outside the context of singular integral theory, on the other hand, there is strong and sustained interest in understanding sharp mapping properties of nonsingular geometric averaging operators. The literature in this direction is truly vast. Most of this work focuses on questions relating to averages over curves or hypersurfaces; of the many truly significant contributions in this area, some of the most interesting and noteworthy advances include the work of D. Oberlin \cites{oberlin1987,oberlin1997,oberlin1999,oberlin2002,oberlin2000II}, Iosevich and Sawyer \cite{is1996}, Seeger \cite{seeger1998}, Choi \cites{choi1999,choi2003}, Christ \cite{christ1998}, Greenleaf, Seeger, and Wainger \cite{gsw1999}, Secco \cite{secco1999}, Bak \cite{bak2000}, Tao and Wright \cite{tw2003},  Lee \cite{lee2004}, Dendrinos, Laghi, and Wright \cite{dlw2009},  Erdo{\u{g}}an and R. Oberlin \cite{eo2010}, and Stovall \cites{stovall2010,stovall2014}. See also \cites{gressman2004,gressman2009,gressman2013} for related work on curves and hypersurfaces.

Of the important results briefly noted above, the work upon which the present analysis is most directly built is that of Christ \cite{christ1998}. Christ's paper studies $L^p-L^q$ averages for the convolution operator on $\R^d$ given by
\begin{equation} T f(x) := \int_{0}^1 f( x + (t,t^2,\ldots,t^d)) dt \label{christ} \end{equation}
Christ establishes sharp boundedness (up to for endpoints) via a method he developed which is sometimes referred to as the method of inflation or the method of refinements. A particularly useful feature of this method is that it is very concrete and combinatorial in nature (in contrast to many earlier works which study these operators indirectly via oscillatory integrals or analytic interpolation theorems). Roughly speaking, the idea is to study the $d$-fold alternating composition of $T$ and $T^*$. The map
\begin{equation} \Phi(t_1,\ldots,t_d) := \left( \sum_{i=1}^d (-1)^i t_i, \ldots, \sum_{i=1}^d (-1)^i t_i^d \right) \label{christmap} \end{equation}
which appears in the $d$-fold composition is regarded as a singular change of variables, and the goal of the analysis is, roughly, to bound for any Borel sets $E$ and $F$ in $\R^d$ the size of the set in which $x \in F$, $x + \Phi(t) \in E$, and $t$ is near the set where the Jacobian determinant of $\Phi$ vanishes. In Christ's original paper, it proved useful to over-iterate $T$ and $T^*$ in higher dimensions. This was not due to a failure of the heuristic, but rather it made it possible find a more accommodating $d$-tuples $(t_{i_1},\ldots,t_{i_d})$ to work with in certain degenerate situations. Tao and Wright \cite{tw2003} ultimately merged Christ's approach with the geometric vector field formulation used by Christ, Nagel, Stein, and Wainger \cite{cnsw1999} and others. In so doing, they eliminated the necessity of over-iterating the maps and were able to provide a remarkable and essentially complete calculus for determining $L^p-L^q$ boundedness of averages over curves.

It should also be noted that, aside from averages over curves and hypersurfaces, interest in the Kakeya problem contributed to the development of the multilinear theory of singular geometric averages relating to \eqref{bilinear}. Here the goal is to develop nonlinear generalizations of a series of inequalities found in the literature typically bearing some subset of the names H\"{o}lder, Brascamp, Lieb, Luttinger, Loomis, and Whitney. Some recent work in the harmonic analysis community in this direction includes
Bennett, Carbery, and Wright \cite{bcw2005}, Bennett, Carbery, Christ, and Tao \cite{bcct2008}, Bennett and Bez \cite{bb2010}, and Bennett, Bez, and Guti{\'e}rrez, \cite{bbg2013}. Geometrically, most of this work is characterized by a transversality condition playing the central role that is occupied by rotational curvature in the earlier work on non-multilinear averages (although Tao, Vargas, and Vega \cite{tvv1998}, Stovall \cite{stovall2011}, and Grafakos, Greenleaf, Iosevich, and Palsson \cite{ggip2015} stand as some of the most notable examples of multilinear geometric averages in which both curvature and transversality play important roles).

In contrast to the cases above, very little work has been done to understand linear geometric averaging operators in the case of submanifolds which are neither curves nor hypersurfaces. Various examples of such objects have been identified by 
Ricci \cite{ricci1997} and D. Oberlin \cite{oberlin2008}, and somewhat broader classes were considered by Drury and Guo \cite{dg1991} and Gressman \cite{gressman2015} when the averages were taken over submanifolds of half the ambient dimension, but until now there does not appear to have been any results of a broadly applicable nature analogous to Phong-Stein rotational curvature results or the Tao-Wright result. 


\subsection{Approach and organization}

Inflation arguments typically involve the construction of some geometrically-inspired mapping analogous to \eqref{christmap} which can be regarded as a singular coordinate system on one of the spaces involved (which can happen only when the number of variables involved satisfy some favorable factorization constraints). In the case of the arguments that follow, the relevant geometric mappings are typically overdetermined, meaning that the number of parameters exceeds the dimension of the space on which the mapping is built. The sort of over-iteration encountered here is of a fundamentally different nature than the kind encountered, for example, in Christ's work on the corkscrew curve, and the difference leads to a number of new challenges. Chief among them is that the solution-counting problems that one typically encounters before applying the generalized change-of-variables formula are replaced with a much more subtle problem of bounding integrals over very poorly-understood and potentially singular submanifolds which solve some complicated system of equations. For example, an important technical issue is to show that, when $\Phi$ is a sufficiently regular map from some bounded open set in $\R^n$ into $\R^{n-k}$, for any Euclidean ball $B_r(x)$ of radius $r$, the $k$-dimensional Hausdorff measure of the set
\[ \set{ y \in B_r(x)}{ \Phi(y) = c } \]
is (generically in $c$) bounded by some fixed constant times $r^k$. In the particular context in which we would like to apply this result, it is not possible to assume that the Jacobian of $\Phi$ is nonsingular. Consequently, it is necessary to assume additional regularity beyond $C^\infty$. Real analyticity is sufficient (but note that other, larger function spaces would also suffice thanks to the theory of $o$-minimal structures), and in Section \ref{solsec} we prove the necessary regularity results. This can be regarded as a replacement tool for B\'{e}zout's theorem as it is typically applied in inflation arguments and is a largely stand-alone argument.

Technical issues aside, the proof of each part of Theorems \ref{modelthm}--\ref{variableasym} proceeds by reducing the problem to the study of a geometrically-defined sublevel set operator. The path from Radon-like transform to the corresponding sublevel set operator is somewhat lengthy, but for the model case of bilinear-type Radon-like transforms, the result of this calculation may be succinctly stated as follows:
\begin{theorem}
Let $Q : \R^{d_L} \times \R^{d_R} \rightarrow \R^{\ell}$ be any bilinear map with $d_L + d_R \geq \ell$. For any compact set $\Omega \subset \R^{d_L} \times \R^{d_R} \times \R^{\ell}$, let \label{bilinearthm}
\[ B_{Q} (f,g) := \int_{\Omega} f( y, z + Q(x,y) ) g(x,z) dx dy dz. \]
Let $\mathrm{Vol}_Q(x,y)$ equal the largest possible volume of a parallelepiped in $\R^\ell$ generated by vectors in the collection $\{ Q( e_i, y) \}_{i=1}^{d_L} \cup \{ Q(x,e_j) \}_{j=1}^{d_R}$ (where $e_i$'s and $e_j$'s are standard basis vectors) and consider the function
\[ \Phi_Q(x,y) := \frac{\mathrm{Vol}(x,y )}{ ( |x|^2 + |y|^2)^{\frac{d_L + d_R - \ell}{2}}}  \]
together with the associated sublevel set operator
\[ W_{Q,\alpha} ( g, f) := \int_{\Phi_Q(x,y) \leq \alpha} g(x) f(y) dx d y. \]
If there exists $s>0$ and $p_l,p_r \in [1,\infty]$ such that
\begin{equation} W_{Q,\alpha} (\chi_{E^l}, \chi_{E^r}) \lesssim \alpha^s |E^l|^{\frac{1}{p_l}} |E^r|^{\frac{1}{p_r}} \label{sublevelhyp} \end{equation}
for all measurable sets $E^l \subset \R^{d_L}$ and $E^r \subset \R^{d_R}$ and all $\alpha > 0$ (where the notation $\lesssim$ used here and throughout means that the inequality holds up to an implied constant factor which is independent of all varying quantities like $\alpha$ and the sets $E^l$ and $E^r$), then 
\[ B_Q (f,g) \lesssim ||f||_{L^{q_L}} ||g||_{L^{q_R}} \]
for all measurable functions $f$ and $g$ whenever $(q_L^{-1}, q_R^{-1})$ belongs to the interior of the triangle with vertices $(1,0), (0,1)$, and 
 \[ \left( \frac{2 + (s p_l')^{-1}}{3 + (s p_l')^{-1} + (s p_r')^{-1}},  \frac{2 + (s p_r')^{-1}}{3 + (s p_l')^{-1} + (s p_r')^{-1}} \right). \]
If \eqref{sublevelhyp} holds only for sets $E^l$ and $E^r$ belonging to fixed neighborhoods of the origins in $\R^{d_L}$ and $\R^{d_R}$, then the conclusion remains true provided that the diameter of $\Omega$ is sufficiently small. 
\end{theorem}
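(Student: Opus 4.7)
The plan is to adapt the method of $TT^*T$ (introduced for curves by Christ and refined by Tao--Wright) to the intermediate-dimensional bilinear setting, with \eqref{sublevelhyp} serving as the quantitative replacement for the rotational curvature condition. By the standard restricted weak-type reduction together with real interpolation, it suffices to establish the endpoint restricted weak-type inequality $B_Q(\chi_F,\chi_G) \lesssim |F|^{1/q_L}|G|^{1/q_R}$ at the specified vertex of the triangle, absorbing logarithmic losses in the interpolation step.

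I would begin with dyadic refinement. For Borel sets $F \subset \R^{d_R} \times \R^{\ell}$ and $G \subset \R^{d_L} \times \R^{\ell}$, define fiber densities
\[ \Psi_L(y,z') := |\{x : (x, z' - Q(x,y)) \in G\}|, \qquad \Psi_R(x,z) := |\{y : (y, z + Q(x,y)) \in F\}|, \]
and pigeonhole to subsets $F^\sharp \subset F$ and $G^\sharp \subset G$ on which $\Psi_L \sim \lambda$ and $\Psi_R \sim \mu$, so that $\lambda |F^\sharp| \sim \mu |G^\sharp| \sim B_Q(\chi_F,\chi_G)$ up to logarithms; a further pigeonhole ensures that from most points of $F^\sharp$, most of the $\lambda$ candidate $x$'s actually terminate in $G^\sharp$, and dually. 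One then executes the $TT^*T$ inflation: starting from $(y_0, z_0') \in F^\sharp$, construct a three-step chain by choosing $x_1, y_1, x_2$ so that intermediate points lie alternately in $G^\sharp, F^\sharp, G^\sharp$. The relations $z_i = z_{i-1}' - Q(x_i, y_{i-1})$ and $z_i' = z_{i-1}' + Q(x_i, y_i - y_{i-1})$ produce a terminal $z$-coordinate $z_0' + Q(x_1, y_1 - y_0) - Q(x_2, y_1)$, and counting valid chains yields $\gtrsim \lambda^2 \mu |F^\sharp|$ configurations.

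The key observation is that, for fixed start $(y_0,z_0')$ and endpoint $(x_2,z_2)$, the free chain parameters $(x_1, y_1) \in \R^{d_L}\times\R^{d_R}$ satisfy a single $\ell$-dimensional constraint whose Jacobian matrix has columns $Q(e_i, y_1-y_0)$ and $Q(x_1-x_2, e_j)$ --- precisely the collection defining $\mathrm{Vol}_Q$ evaluated at the difference pair $(x_1 - x_2, y_1 - y_0)$. I would then split the chain integral according to the size of $\Phi_Q(x_1 - x_2, y_1 - y_0)$ relative to a threshold $\alpha$. On the non-degenerate piece $\Phi_Q > \alpha$, the coarea formula together with the Hausdorff measure bound on level sets of real-analytic maps from Section \ref{solsec} gives a bound of the form $\lesssim \alpha^{-1} |F^\sharp|^{a} |G^\sharp|^{b}$; the normalization $(|x|^2+|y|^2)^{(d_L+d_R-\ell)/2}$ in $\Phi_Q$ is precisely what matches the fiber dimension appearing in coarea. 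On the degenerate piece $\Phi_Q \leq \alpha$, the hypothesis \eqref{sublevelhyp} directly bounds the contribution by $\alpha^s$ times fractional powers of $|F^\sharp|$ and $|G^\sharp|$ with exponents governed by $p_l'$ and $p_r'$. Combining the two bounds, optimizing over $\alpha$, and substituting $\lambda |F^\sharp| \sim \mu |G^\sharp| \sim B_Q$ produces the restricted weak-type estimate at the specified vertex; heuristically, the ``$3$'' in the denominator $3 + (sp_l')^{-1} + (sp_r')^{-1}$ records the three iterations of $TT^*T$, while the numerators $2 + (sp_l')^{-1}$ and $2 + (sp_r')^{-1}$ record the double occurrence of $G^\sharp$ (resp.\ $F^\sharp$) in the chain together with the fractional sublevel contribution.

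The principal obstacle is the change-of-variables step for the overdetermined map from chain parameters into $\R^\ell$. Square inflation arguments can invoke B\'ezout-type preimage counting; here the fiber has generically positive dimension, and one must instead bound its $(d_L+d_R-\ell)$-dimensional Hausdorff measure, which is precisely what the real-analytic regularity results of Section \ref{solsec} supply. A secondary subtlety is the careful matching between the scale normalization built into $\Phi_Q$ and the fiber-dimension factor produced by coarea --- without this, the sublevel-set formulation would not be the sharp one. Finally, carrying through the bookkeeping so that the exponents match exactly the stated vertex of the triangle requires some care in how the integrations over $y_0, z_0'$ and over the endpoint $(x_2, z_2)$ are redistributed after the change of variables; the local version (with small $\Omega$) follows by restricting all relevant difference pairs to the prescribed neighborhood of the origin.
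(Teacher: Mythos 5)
Your proposal is correct and follows essentially the same route as the paper: a $TT^*T$/refinement argument, identification of the chain Jacobian with $\mathrm{Vol}_Q(x_1-x_2,\,y_1-y_0)$ (matching the paper's computation ${\mathcal C}_p^l(X_R^j)\sim Q(x^l-x^c,e_j)\cdot\nabla_{z^c}$, ${\mathcal C}_p^r(X_L^i)\sim Q(e_i,y^c-y^r)\cdot\nabla_{z^c}$), a split into a large-Jacobian piece handled by the coarea formula together with the real-analytic fiber regularity of Section \ref{solsec} and a small-Jacobian piece handled by \eqref{sublevelhyp}, followed by optimization in $\alpha$. The only presentational difference is that you run the classical dyadic pigeonholing with parameters $\lambda,\mu$, whereas the paper packages the same refinement into Lemma \ref{ttstartlem} and Lemma \ref{mainlemma} (thereby avoiding pigeonholing losses and any a priori reference to the exponents); both versions yield the stated open triangle.
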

Establishing boundedness of the sublevel set operator \eqref{sublevelhyp} is not always a simple matter, but its geometric nature makes it amenable to methods not unlike those employed in \cite{gressman2010} to establish boundedness of multilinear determinant-type sublevel set functionals. In Section \ref{sublevellemmasec} we prove useful lemma for establishing such geometric sublevel set functional inequalities which effectively reduces the problem to the estimation of scalar sublevel sets (i.e., not involving integration against any $L^p$ functions).

The general organization of the rest of this paper is as follows:  Section \ref{ttstartsec} establishes a self-contained version of the method of $TT^*T$ which implies $L^p-L^q$ estimates even when $p,q \neq 2$. The proof relies on a trivial application of Christ's method of refinements (interestingly, no further applications of the method are necessary aside from the use of the generalized $TT^*T$ inequality established by Lemma \ref{ttstartlem}). The introduction ends with Section \ref{knappsec}, which gives the Knapp examples establishing sharpness of Theorems \ref{modelthm}--\ref{variableasym}. Section \ref{geometrybg} contains definitions and basic calculations which connect the vector field geometry formulation to $TT^*T$. In particular, this section identifies the geometry of the incidence manifold $\threesp$ which underlies the geometry of $TT^*T$. Section \ref{reductionsec0} uses the geometric calculations of the previous section to reduce the estimation of $T T^* T$ to the problem of bounding the associated geometric sublevel set operator \eqref{sublevels}. All the calculations in Sections \ref{geometrybg} and \ref{reductionsec0} apply in a general way without any nondegeneracy assumptions. Section \ref{calculationsec} establishes a general principle for proving bounds for sublevel set operators and then breaks into subsections to analyze the particular details of the cases put forward in Theorems \ref{modelthm}--\ref{bilinearthm}.
Finally, Section \ref{solsec} establishes the necessary regularity of measures used in Section \ref{reductionsec0}.

\subsection{Generalized $TT^*T$}
\label{ttstartsec}
Suppose $T$ is any positive operator for which one wishes to establish the restricted weak-type inequality of the form 
\[\int_G T \chi_F  \leq C |F|^\frac{1}{q_L} |G|^\frac{1}{q_R}\]
for all measurable sets $F$ and $G$. If we define $T_{GF}$ to be the operator
\[ T_{GF} f (x) := \chi_G(x) T (f \chi_F)(x), \]
then it would suffice, for example, to show that $T_{FG}$ maps $L^2$ to $L^2$ with an operator norm bounded above by $C |F|^{\frac{1}{q_L} - \frac{1}{2}} |G|^{\frac{1}{q_R}-\frac{1}{2}}$
for all such sets $F$ and $G$, since
\[ \int_G T \chi_F  = \int_G T_{GF} \chi_F \leq ||T_{GF}||_{2 \rightarrow 2} |F|^{\frac{1}{2}} |G|^{\frac{1}{2}}. \]
A major advantage of shifting focus to $L^2$ is that the $L^2 \rightarrow L^2$ norm of $T_{GF}$ can be studied via the $L^2 \rightarrow L^2$ norm of $T_{GF} T^*_{GF} T_{GF}$ (or any number of other iterated, alternating compositions of $T_{GF}$ and $T^*_{GF}$). This is the essence of the main argument in this paper. Unfortunately, there are circumstances in which $T_{GF}$ does not behave as well on $L^2$ as one would like:  namely, if one of $q_L$ or $q_R$ exceeds $2$, then for trivial reasons the estimate $||T_{GF}||_{2 \rightarrow 2} \leq C |F|^{\frac{1}{q_L} - \frac{1}{2}} |G|^{\frac{1}{q_R}-\frac{1}{2}}$ cannot hold. This however, turns out to be only a minor inconvenience.
 In the general case, one can use the method of refinements to generalize the inequality $||T_{GF}||_{2 \rightarrow 2}^3 \leq ||T_{GF} T^*_{GF} T_{GF}||_{2 \rightarrow 2}$ to obtain a sufficiently useful substitute for a norm estimate of $||T_{GF}||_{2 \rightarrow 2}$.
\begin{lemma}[Generalized $TT^*T$]
Suppose $T$ is a positive linear operator which maps $L^2 (\Lsp)$ to $L^2(\Rsp)$. \label{ttstartlem}
For any measurable sets $F$ and $G$  in $\Lsp$ and $\Rsp$ with finite, nonzero measure, let
\begin{align}
 F' & := \set{ x \in F}{ T^* \chi_G (x) \geq \frac{\int_G T \chi_F}{3 |F|} } \mbox{ and } \label{fset} \\
  G' & := \set{y \in G}{T \chi_F(y) \geq \frac{\int_G T \chi_F}{3 |G|}}. \label{gset}
  \end{align}
Then
\begin{equation} \left( \frac{1}{3} \int_{G} T \chi_F \right)^3 \leq |F| |G|  \int_{G} T_{GF} T_{G'F'}^* T_{GF} \chi_{F}. \label{refined} \end{equation}
\end{lemma}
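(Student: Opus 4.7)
The plan is to set $I := \int_G T\chi_F$ so that \eqref{refined} becomes the equivalent statement $(I/3)^3 \leq |F| |G| \int_G T_{GF} T^*_{G'F'} T_{GF} \chi_F$. The argument will split into two essentially independent pieces: a mass estimate guaranteeing that $F'$ and $G'$ are ``popular'' enough that $\int_{G'} T \chi_{F'}$ retains most of the mass of $I$, and an algebraic unwrapping of the triple composition that reduces the right-hand side to an integral controlled by the mass estimate together with the pointwise lower bounds defining $F'$ and $G'$. Positivity of $T$ is essential for both pieces: it validates $\int_G T\chi_F = \int_F T^* \chi_G$ on indicators, and it allows a pointwise inequality to be preserved after applying $T$.

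For the mass estimate, the definitions \eqref{fset} and \eqref{gset} yield, via direct Chebyshev-type reasoning, $\int_{F \setminus F'} T^* \chi_G \leq |F \setminus F'| \cdot \frac{I}{3|F|} \leq \frac{I}{3}$ and the analogous bound $\int_{G \setminus G'} T \chi_F \leq \frac{I}{3}$. Decomposing
\[ I = \int_G T \chi_F = \int_{G \setminus G'} T \chi_F + \int_{G'} T \chi_{F \setminus F'} + \int_{G'} T \chi_{F'} \]
and bounding the middle term by adjointness via $\int_{G'} T \chi_{F \setminus F'} \leq \int_{F \setminus F'} T^* \chi_G \leq I/3$ should then yield the key mass bound $\int_{G'} T \chi_{F'} \geq I/3$.

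For the algebraic unwrapping, I will use the explicit formulas $T_{GF} f = \chi_G T(\chi_F f)$ and $T^*_{G'F'} h = \chi_{F'} T^*(\chi_{G'} h)$ together with the inclusions $F' \subseteq F$ and $G' \subseteq G$ to collapse indicator functions; two applications of adjointness then produce
\[ \int_G T_{GF} T^*_{G'F'} T_{GF} \chi_F = \int T(\chi_{F'} T^* \chi_G) \cdot \chi_{G'} T \chi_F. \]
On $F'$ one has $T^* \chi_G \geq I/(3|F|)$, so $\chi_{F'} T^* \chi_G \geq (I/(3|F|)) \chi_{F'}$ pointwise, and positivity of $T$ promotes this to $T(\chi_{F'} T^* \chi_G) \geq (I/(3|F|)) T \chi_{F'}$; similarly $\chi_{G'} T \chi_F \geq (I/(3|G|)) \chi_{G'}$. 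Substituting both pointwise bounds and integrating gives
\[ \int_G T_{GF} T^*_{G'F'} T_{GF} \chi_F \geq \frac{I^2}{9 |F| |G|} \int_{G'} T \chi_{F'} \geq \frac{I^3}{27 |F| |G|}, \]
and multiplying through by $|F| |G|$ recovers \eqref{refined}. I anticipate no deep obstacle: the argument is fundamentally a bookkeeping exercise, and the only subtle point is invoking positivity of $T$ at exactly the right moment, so that the pointwise inequality on $\chi_{F'} T^* \chi_G$ survives an application of $T$ before being paired with $\chi_{G'} T \chi_F$.
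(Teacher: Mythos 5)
Your proposal is correct and follows essentially the same route as the paper: the same Chebyshev-type mass estimate showing $\int_{G'} T\chi_{F'} \geq \frac{1}{3}\int_G T\chi_F$, combined with the pointwise lower bounds $T^*\chi_G \geq \delta_F$ on $F'$ and $T\chi_F \geq \delta_G$ on $G'$ fed through positivity and adjointness. The only cosmetic difference is that you fully unwrap the triple composition to $\int T(\chi_{F'}T^*\chi_G)\cdot\chi_{G'}T\chi_F$ before inserting the pointwise bounds, whereas the paper inserts them one at a time along the chain of adjointness identities; the two computations are identical in substance.
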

\begin{proof}
The proof is a small variation on a familiar argument in the method of refinements. For convenience, define
\[ \delta_F := \frac{1}{3 |F|} \int_{G} T \chi_F \mbox{ and } \delta_G := \frac{1}{3 |G| } \int_G T \chi_F. \]
It follows that
\begin{align*}
 \int_{G}  T_{GF} T_{G'F'}^* T_{GF} \chi_{F}  & = \int_{F'} \left( T^* \chi_G \right) \left( T^*_{G'F'} T_{GF} \chi_F\right) \geq \delta_F \int_{F'} \left( T_{G'F'}^* T_{GF} \chi_F \right) \\
  & = \delta_F \int_{G'} \left( T_{G'F'} \chi_{F'} \right) \left( T \chi_F \right) \geq \delta_F \delta_G \int_{G'} T \chi_{F'}
 \end{align*}
and
\begin{align*}
 \int_{G'} T \chi_{F'} & = \int_{G} T \chi_F - \int_{G \setminus G'} T \chi_{F} - \int_{F \setminus F'} T^* \chi_{G'} \\
& \geq \int_{G} T \chi_F - \delta_G |G| - \delta_F |F| \geq \frac{1}{3} \int_G T \chi_F,
\end{align*}
which together establish \eqref{refined}.
\end{proof}
By \eqref{refined}, one can deduce a restricted weak-type inequality for $T$ if a similar such inequality can be proved for $T_{GF} T^*_{G'F'} T_{GF}$ uniformly in $G$ and $F$. Note that the appearance of $G'$ and $F'$ on the right-hand side of \eqref{refined} represents a very slight gain over the inequality which would be obtained by a simple $T T^* T$ argument. Technically the factor of $\frac{1}{27}$ on the left-hand side of \eqref{refined} is a loss over a direct $TT^*T$ argument, but it is only reasonable to classify this loss as insignificant for the present purposes. The advantage of \eqref{refined}, of course, is that it makes no reference to, and hence no assumptions about, the exponents $q_L$ and $q_R$.

In the context of the averaging operators studied in this paper, the main application of \eqref{refined} involves an argument similar to ``Bourgain's trick,'' in which it will be shown that
\begin{equation}  \int_{G} T_{GF} T_{G'F'}^* T_{GF} \chi_{F}  \leq C \left[  \frac{1}{\alpha} (|F||G|)^{1-\epsilon} + \alpha^s  \int |T^* \chi_G|^{\frac{1}{p_l}} T^*_{G'F'} |T \chi_F|^{\frac{1}{p_r}} \right] \label{btrick} \end{equation}
for some fixed constant $C$ and all positive $\alpha$. The two terms on the right-hand side of \eqref{btrick} follow from estimates of an integral where a certain Jacobian-like quantity is large and small, respectively. In the former case, a generalization of the coarea formula applies, and in the latter case the estimate is reduced to a sublevel set operator estimate. The transition from \eqref{btrick} to an estimate for $T_{GF}T^*_{GF}T_{GF}$, and consequently for $T$ itself, is also fairly immediate.
\begin{lemma}
Suppose that $T$ is a positive operator which is known to be bounded from $L^2(\Lsp)$ to $L^2(\Rsp)$.
Suppose also that there exist exponents $p_l,p_r \in [1,\infty]$, real parameters $\epsilon \in [0,1]$ and $s > 0$, and a finite constant $C$ such that for any measurable sets $F$ and $G$  in $\Lsp$ and $\Rsp$, respectively, and any $\alpha > 0$ the inequality \eqref{btrick} holds, \label{mainlemma}
where $F'$ and $G'$ are the sets defined by \eqref{fset} and \eqref{gset}. Then for some constant $C'$ depending only on $C$ and $s$,
\begin{equation} \int_G T \chi_F \leq C' |F|^{\frac{1}{q_L}} |G|^{\frac{1}{q_R}} \label{rwtype} \end{equation}
where the exponents $q_L$ and $q_R$ are given by
\[ \frac{1}{q_L} := \frac{2 - \epsilon + (s p_l')^{-1}}{3 + (s p_l')^{-1} + (s p_r')^{-1}} \mbox{ and } \frac{1}{q_R} := \frac{2 - \epsilon + (s p_r')^{-1}}{3 + (s p_l')^{-1} + (s p_r')^{-1}}. \]
\end{lemma}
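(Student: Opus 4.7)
The plan is to chain Lemma \ref{ttstartlem} with the hypothesis \eqref{btrick}, exploiting the key observation that the integral $I$ on the right-hand side of \eqref{btrick} can itself be bounded by the very quantity one is trying to estimate.

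Set $N := \int_G T\chi_F$, $\delta_F := N/(3|F|)$, $\delta_G := N/(3|G|)$, and $J := \int_G T_{GF}T^*_{G'F'}T_{GF}\chi_F$. Lemma \ref{ttstartlem} immediately gives $(N/3)^3 \leq |F||G|\,J$. A single adjoint manipulation rewrites
\[J = \int_{F'} (T^*\chi_G) \cdot T^*((T\chi_F)\chi_{G'})\,dx,\]
which is precisely the integral $I := \int (T^*\chi_G)^{1/p_l}\,T^*_{G'F'}(T\chi_F)^{1/p_r}$ appearing in \eqref{btrick}, but with the fractional exponents $1/p_l$ and $1/p_r$ replaced by $1$. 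The definitions \eqref{fset}--\eqref{gset} yield lower bounds $T^*\chi_G \geq \delta_F$ on $F'$ and $T\chi_F \geq \delta_G$ on $G'$; since $p_l, p_r \geq 1$, these convert to pointwise upper estimates $(T^*\chi_G)^{1/p_l} \leq \delta_F^{-1/p_l'}\,T^*\chi_G$ on $F'$ and $(T\chi_F)^{1/p_r} \leq \delta_G^{-1/p_r'}\,T\chi_F$ on $G'$. Substituting these into the defining integral for $I$ and using that $T^*_{G'F'}$ effectively restricts its input to $G'$ and its output to $F'$ produces the central bound $I \leq \delta_F^{-1/p_l'}\delta_G^{-1/p_r'}\,J$.

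Substituting back into \eqref{btrick} gives
\[J \leq C\alpha^{-1}(|F||G|)^{1-\epsilon} + C\alpha^s\delta_F^{-1/p_l'}\delta_G^{-1/p_r'}\,J.\]
Choosing $\alpha = (2C)^{-1/s}(\delta_F^{1/p_l'}\delta_G^{1/p_r'})^{1/s}$ makes the coefficient of $J$ on the right equal to $1/2$. The $L^2 \to L^2$ boundedness of $T$ ensures that $J < \infty$, so this term can be absorbed into the left, yielding $J \lesssim \delta_F^{-1/(sp_l')}\delta_G^{-1/(sp_r')}(|F||G|)^{1-\epsilon}$. Feeding this back into $(N/3)^3 \leq |F||G|\,J$ and expanding $\delta_F, \delta_G$ in terms of $N$ produces
\[N^{3 + 1/(sp_l') + 1/(sp_r')} \lesssim |F|^{2-\epsilon + 1/(sp_l')}\,|G|^{2-\epsilon + 1/(sp_r')},\]
which rearranges to exactly \eqref{rwtype}.

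The only conceptual step is the identification of $I$ as a fractional-exponent version of $J$, which makes the absorption the natural move; the only genuine technical concern is finiteness of $J$, which is handled by the hypothesis that $T$ is bounded from $L^2(\Lsp)$ to $L^2(\Rsp)$.
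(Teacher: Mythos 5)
Your argument is correct and follows essentially the same route as the paper's proof: the pointwise bounds $T^*\chi_G \geq \delta_F$ on $F'$ and $T\chi_F \geq \delta_G$ on $G'$ are used to dominate the fractional-exponent integral by $\delta_F^{-1/p_l'}\delta_G^{-1/p_r'}J$, the same choice of $\alpha$ makes the second term absorbable (with finiteness of $J$ justified exactly as in the paper by the $L^2$ hypothesis), and the conclusion follows by combining with Lemma \ref{ttstartlem}. No gaps.
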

\begin{proof}
By virtue of \eqref{fset} and \eqref{gset},
\begin{align*}
 \int |T^* \chi_G|^{\frac{1}{p_l}} T^*_{G'F'} |T \chi_F|^{\frac{1}{p_r}} &  \leq \delta_F^{-\frac{1}{p'_l}} \int \left( T^* \chi_G \right) T^*_{G'F'} |T \chi_F|^{\frac{1}{p_r}} \\
 & \leq \delta_F^{-\frac{1}{p'_l}} \int \left( T_{G' F'} T^* \chi_G \right)  |T \chi_F|^{\frac{1}{p_r}} \\
&  \leq \delta_F^{-\frac{1}{p'_l}} \delta_G^{-\frac{1}{p_r'}}   \int \left( T_{G' F'} T^* \chi_G \right) ( T \chi_F ) \\
 & =   \delta_F^{-\frac{1}{p'_l}} \delta_G^{-\frac{1}{p_r'}} \int_{G} (T_{GF} T_{G'F'}^* T_{GF} \chi_{F}).
 \end{align*}
 Consequently
 \begin{align*}
 \int_{G} T_{GF} T_{G'F'}^* T_{GF} \chi_{F} & \leq C \left[  \frac{1}{\alpha} (|F||G|)^{1-\epsilon} + \alpha^s  \int |T^* \chi_G|^{\frac{1}{p_l}} T^*_{G'F'} |T \chi_F|^{\frac{1}{p_r}} \right] \\
 & \leq C \left[ \frac{1}{\alpha} (|F||G|)^{1-\epsilon} + \alpha^s \delta_F^{-\frac{1}{p'_l}} \delta_G^{-\frac{1}{p_r'}} \int_{G} T_{GF} T_{G'F'}^* T_{GF} \chi_{F} \right].
 \end{align*}
 To establish \eqref{rwtype}, we may assume that the measures of $F$ and $G$ are both finite, and since $T$ maps $L^2$ to $L^2$ and is positive, we also have that
 \[  \int_{G} T_{GF} T_{G'F'}^* T_{GF} \chi_{F} < \infty. \]
 Choosing $\alpha := (2 C)^{-1/s} \delta_F^{1/(s p_l')} \delta_G^{1/(sp_r')}$ and using  that $\int_{G} T_{GF} T_{G'F'}^* T_{GF} \chi_{F}$ must be finite, it follows that
 \begin{align*}
 \int_G T_{GF} T^*_{G' F'} T_{GF} \chi_F & \leq (2 C)^{ \frac{s+1}{s}} \delta_F^{-\frac{1}{s p'_l}} \delta_G^{-\frac{1}{s p_r'}} (|F||G|)^{1-\epsilon},
 \end{align*}
 so by \eqref{refined}, it follows that
\[ \left( \int_G T \chi_F \right)^{3 + \frac{1}{s p'_l} + \frac{1}{s p'_r}} \leq 27 ( 2 C)^{\frac{s+1}{s}} |F|^{2 - \epsilon + \frac{1}{s p_l'}} |G|^{2 - \epsilon + \frac{1}{s p_r'}}, \]
which gives exactly \eqref{rwtype}. 
\end{proof}

\subsection{Sharpness calculation}
\label{knappsec}

We conclude the introduction with a review of the Knapp examples for \eqref{bilinear}.  Fix any point $m \in \onesp$, and let $F_\epsilon$ be the ball of radius $\epsilon$ centered at $\pi_L(m)$ in $\Lsp$. For sufficiently small $\epsilon$, it must be the case that $\mu_\Lsp(F_\epsilon) \approx \epsilon^{n_L}$. Next, let $M_\epsilon$ equal the set $\pi_L^{-1} F_\epsilon$ restricted to some open set $U \subset \onesp$ with compact closure. Because $d \pi_L$ is surjective, it is possible by the implicit function theorem to find a coordinate system on a neighborhood of $m$ such that, in these coordinates, $\pi_L^{-1} F_\epsilon$ contains the box $[-\epsilon,\epsilon]^{n_L} \times [-1,1]^{n_R - \ell}$ for all $\epsilon$ sufficiently small and is contained in a box of comparable side lengths as well. In particular, then, we must have that $\mu_\onesp(M_\epsilon) \approx \epsilon^{n_L}$.  Lastly, since the kernels of $d \pi_L$ and $d \pi_R$ are transverse, $\pi_L^{-1} \pi_L(m)$ projects via $\pi_R$ to an immersed submanifold of dimension $n_R - \ell$ in $\Rsp$ (which is the same dimension as the embedded submanifold $\pi_L^{-1} \pi_L(m)$ in $\onesp$). Since all points in $M_\epsilon$ are distance at most $\epsilon$ to $\pi_L^{-1} \pi_L(m)$, we must have by smoothness of $\pi_R$ that all points of $\pi_R (M_\epsilon)$ are within distance comparable to $\epsilon$ of the immersed submanifold $\pi_R \pi_L^{-1} \pi_L(m)$. Consequently, if the neighborhood $U$ is reduced to a sufficiently small size, it will be the case that $G_\epsilon := \pi_R(M_\epsilon)$ has $\mu_\Rsp$-measure bounded by a factor times $\epsilon^{\ell}$. 
Consequently if \eqref{bilinear} is bounded for all $f \in L^{q_L}(\Lsp)$ and $g \in L^{q_R}(\Rsp)$, we must have for all $\epsilon$ sufficiently small that
\begin{align*}
 \epsilon^{n_L} \lesssim  |\onesp_\epsilon| & = \int_U \left( \chi_{F_\epsilon} \circ \pi_L \right) \left( \chi_{G_\epsilon} \circ \pi_R \right) d \mu_\onesp \\
 & \lesssim (\mu_\Lsp(F_\epsilon))^{\frac{1}{q_L}} ( \mu_\Rsp(G_\epsilon))^{\frac{1}{q_R}} \lesssim \epsilon^{\frac{n_L}{q_L} + \frac{\ell}{q_R}}.
 \end{align*}
This can only hold when
\begin{equation} \frac{1}{q_L} + \frac{\ell}{n_L} \frac{1}{q_R} \leq 1. \label{knapp1} \end{equation}
By symmetry, we must also have that
\begin{equation} \frac{\ell}{n_R} \frac{1}{q_L} + \frac{1}{q_R} \leq 1. \label{knapp2} \end{equation}
For the constraint \eqref{knapp1}, equality occurs when $(\frac{1}{q_L}, \frac{1}{q_R})$ lies on the line through the points $(1,0)$ and
\begin{equation} \left( \frac{n_R (n_L - \ell)}{n_L n_R - \ell^2} , \frac{n_L( n_R - \ell)}{n_L n_R - \ell^2} \right), \label{mainpt} \end{equation}
and equality occurs in \eqref{knapp2} when $(\frac{1}{q_L}, \frac{1}{q_R})$ lies on the line through the point $(0,1)$ and the point given by \eqref{mainpt}. Quick calculations give that \eqref{mainpt} equals $(\frac{5}{8},\frac{5}{8})$ when $n_L = n_R = 5$ and $\ell = 3$ or $n_L = n_R = 10$ and $\ell = 6$, $(\frac{8}{13},\frac{8}{13})$ when $n_L = n_R = 8$ and $\ell = 5$, and $(\frac{d_R+1}{d_R+2}, \frac{2}{d_R+2})$ when $n_L = 2d_R$, $n_R = d_R+1$, and $\ell = d_R$. This implies sharpness of Theorems \ref{modelthm}--\ref{variableasym} up to cases on the boundary of the respective triangles. Also note for the sake of completeness that Theorem \ref{bilinearthm} will be sharp when $s,p_l$, and $p_r$ satisfy
\[ 1 + \frac{1}{s p_l'} = \frac{\ell}{n_L - \ell} \mbox{ and } 1 + \frac{1}{s p_r'} = \frac{\ell}{n_R - \ell}. \] 
An interesting feature of this criterion is that any estimate for \eqref{sublevelhyp} which satisfies this constraint will continue to do so when it is interpolated with the trivial $L^1 \times L^1$ estimate. Thus, there are always a range of possible estimates for \eqref{sublevelhyp} which would prove best-possible results for Theorem \ref{bilinearthm}.


\section{General geometric framework}

\label{geometrybg}

\subsection{Geometry of one projection}

A {\it $k$-multivector field} on an $n$-dimensional manifold ${\mathcal M}$  is any smooth section of the $k$-th exterior power of tangent bundle of ${\mathcal M}$. These objects are naturally identifiable as dual to $k$-forms (which are instead built on the cotangent bundle) by extending the definition \label{geometry1}
\[ ( d x_{i_1} \wedge \cdots \wedge d x_{i_k}) \left( \frac{\partial}{\partial y_{j_1}} \wedge \cdots \wedge \frac{\partial}{\partial y_{j_k}} \right)  := \det \left[ \! \! \begin{array}{ccc} dx_{i_1} \left( \frac{\partial}{\partial y_{j_1}} \right)  & \! \! \cdots \! \!  &  dx_{i_1} \left( \frac{\partial}{\partial y_{j_k}} \right) \\
\vdots & \! \! \ddots \! \! & \vdots \\
dx_{i_k} \left( \frac{\partial}{\partial y_{j_1}} \right) & \! \! \cdots \! \! &  dx_{i_k} \left( \frac{\partial}{\partial y_{j_k}} \right)
\end{array} \! \! \right]
 \]
by linearity and verifying that the definition is independent of the choice of bases $dx_1,\ldots,dx_n$ of cotangent vectors and $\frac{\partial}{\partial y_1},\ldots,\frac{\partial}{\partial y_n}$ of tangent vectors. In particular, if ${\mathcal M}$ is $n$-dimensional and possesses a nonvanishing $n$-form $\mu_\onesp$, then there is a unique nonvanishing $n$-multivector field $\Xi_\onesp$ such that $\mu_\onesp ( \Xi_\onesp ) = 1$ everywhere (where uniqueness follows from the fact that both $n$-forms and $n$-multivectors form one-dimensional vector spaces at every point).

Now suppose that the $n$-dimensional ${\mathcal M}$ is equipped with a nonvanishing $n$-form $\mu_{\mathcal M}$ and as well as a smooth map $\pi : {\mathcal M} \rightarrow {\mathcal X}$ into some $k$-dimensional manifold ${\mathcal X}$ which is itself equipped with a nonvanishing $k$-form $\mu_{\mathcal X}$. If the differential $d \pi$ is everywhere surjective, then the Implicit Function Theorem guarantees that the fibers of the map $\pi$ are embedded $(n-k)$-dimensional submanifolds of ${\mathcal M}$. It is possible, by the following construction, to identify a unique $(n-k)$-multivector field on ${\mathcal M}$ which encodes the kernel of $d \pi$ at every point. To begin, fix any point $m \in \onesp$ and let $V_1,\ldots,V_{n-k}$ be any linearly-independent vectors in the kernel of $d \pi$ at $m$.  For any additional tangent vectors $X_1,\ldots,X_k$ at  $m$, consider the quantities
\[ \mu_{\onesp} \left( X_1 \wedge \cdots X_{k} \wedge V_1 \wedge \cdots \wedge V_{n-k} \right) \mbox{ and } \mu_{\mathcal X} \left( d \pi (X_1) \wedge \cdots \wedge d \pi (X_k) \right). \]
Both quantities are unchanged if any vector $X_i$ is replaced by $X_i + \sum_{j=1}^{d-k} c_j V_j$ and consequently both expressions extend to alternating $k$-linear forms on the vector space $T_m({\onesp}) / \ker d \pi$, which is $k$-dimensional. Therefore uniqueness of the determinant implies that they differ by a constant independent of the choice of $X_1,\ldots,X_k$. Since $\mu_{\onesp}$ is assumed nonvanishing, we may always choose $V_1,\ldots,V_{n-k} \in \ker d \pi$ so that
\begin{equation} \label{multivec}
\begin{split} \mu_{\onesp} &  \left( X_1 \wedge \cdots X_{k} \wedge V_1 \wedge \cdots \wedge V_{d-k} \right) \\ & = \mu_{\mathcal X} \left( d \pi (X_1) \wedge \cdots \wedge d \pi (X_k) \right) \ \ \forall X_1,\ldots,X_k \in T_m({\onesp}). 
\end{split}
\end{equation}
Notice that $d \pi(X_1) \wedge \cdots \wedge d \pi(X_k)$ depends only on $X_1 \wedge \cdots \wedge  X_k$, and so will be abbreviated $d \pi(X_1 \wedge \cdots \wedge X_k)$ for convenience.
Because the space of $(n-k)$-multivectors generated by $\ker d \pi$ is one-dimensional, the value of $V_1 \wedge \cdots \wedge V_{d-k}$ is constant for any $V_1,\ldots,V_{d-k}$ satisfying \eqref{multivec}. Thus the multivector
\[ {\mathbf V} := V_1 \wedge \cdots \wedge V_{n-k} \]
depends only on $\mu_\onesp$, $\mu_{\mathcal X}$, and the map $\pi$.

Since ${\mathbf V}$ is nonvanishing, when it is restricted to the fibers of $\pi$, it is dual to a unique nonvanishing $(n-k)$-form on those fibers, which we will call $\mu^\pi$ (note that uniqueness here only holds on the fibers; any extension of $\mu^\pi$ to all of ${\onesp}$ will not be unique). Therefore if $V_1,\ldots,V_{n-k}$ are {\it any} vectors in $\ker d \pi$ at the point $m$ (not necessarily normalized as in \eqref{multivec}), it must be the case that
\begin{equation*}
\begin{split} \mu_{\onesp} &  \left( X_1 \wedge \cdots X_{k} \wedge V_1 \wedge \cdots \wedge V_{n-k} \right) \\ & = \mu_{\mathcal X} \left( d \pi ( X_1 \wedge \cdots \wedge X_k) \right) \mu^\pi (V_1 \wedge \cdots \wedge V_{n-k})
\end{split}
\end{equation*}
for all $X_1,\ldots,X_k \in T_m(\onesp)$ and all $V_1,\ldots,V_{n-k} \in \left. \ker d \pi \right|_m$. This equality leads to a geometric Fubini/coarea formula for the integration of functions $f$ against the density $|\mu_{\onesp}|$: we may factor integrals over $\onesp$ into an integral over the fibers of $\pi$ followed by an integral over ${\mathcal X}$:
\begin{equation} \int_{\onesp} f ~ d |\mu_{\onesp}| = \int_{\mathcal X} \left[ \int_{\pi^{-1}(x)} f~ d |\mu^\pi| \right] d\left|\left.\mu_{\mathcal X}\right|_x \right|. \label{coarea0}
\end{equation}
One small but important note is that this construction works equally well if $\onesp$ and $\mathcal X$ are only equipped with smooth nonvanishing densities $|\mu_{\onesp}|$ and $|\mu_{\mathcal X}|$. The only problem introduced by this change is an ambiguity in the sign of $\mathbf V$. This does not affect the integration formula \eqref{coarea0} and uniqueness can be restored by working with what will be called {\it unsigned multivectors}, which are simply multivectors modulo scalar multiplication by $\pm 1$. We also note that a $k$-multivector (signed or unsigned) will be called {\it decomposable} when it may be written as a wedge product of $k$ vectors in $T_{m}(\onesp)$.

\subsection{Geometry of two projections and $T T^*T$}

The intrinsic geometry of \eqref{bilinear} is governed by the structure of the two projections $\pi_L$ and $\pi_R$. \label{geometry2}
For any measurable sets $F$ and $G$ in $\Lsp$ and $\Rsp$, we will be interested in the restriction of \eqref{bilinear} to $F \times G$ defined by
\begin{equation} B_{GF} (f,g) :=  \int_{\onesp} \left( (f \chi_F) \circ \pi_L \right) \left( (g \chi_G) \circ \pi_R \right) d \mu_{\onesp}. \label{ifgdef} \end{equation}
By \eqref{coarea0}, there are measures of smooth density $d \mu^{\pi_L}$ and $d \mu^{\pi_R}$ on the fibers of $\pi_L$ and $\pi_R$, respectively, such that
\begin{align}
 \int_{\onesp} \left( f \circ \pi_L \right) \left( g \circ \pi_R \right) d \mu_{\onesp}  &  = \int_{\Lsp} f(x) \left[ \int_{\pi_{L}^{-1}(x)} g \circ \pi_R ~ d \mu^{\pi_L} \right] d \mu_{\Lsp} (x) \label{integ1} \\
 & 
 = \int_{\Rsp} g(x) \left[ \int_{\pi_{R}^{-1}(x)} f \circ \pi_L ~ d\mu^{\pi_R} \right] d \mu_{\Rsp} (x) \label{integ2}
 \end{align}
 for any measurable functions $f$ on ${\mathcal X}_L$ and $g$ on ${\mathcal X}_R$. In particular, replacing $f$ and $g$ by $f \chi_F$ and $g \chi_G$ shows that the operators
  \begin{align}
  T_{GF} f (x_R) & :=  \chi_G(x_R) \int_{\pi_{R}^{-1}(x_R)} (f \chi_F) \circ \pi_L ~ d\mu^{\pi_R}, \label{tdef} \\
    T_{GF}^* g (x_L) & :=  \chi_F(x_L) \int_{\pi_{L}^{-1}(x_L)} (g \chi_G) \circ \pi_R ~ d\mu^{\pi_L},  \label{tsdef}
  \end{align}
 satisfy
 \begin{equation*} B_{GF}(f,g) = \int_{{\mathcal X}_R}\left(T_{GF} f \right) g d \mu_{\Rsp} = \int_{\Lsp} f \left( T_{GF}^* g \right) d \mu_{\Lsp}. \end{equation*}
 Note that by \eqref{integ1} and \eqref{integ2}, if $\Omega$ is a compact subset of $\onesp$, it must be that 
 \[ \left| \int_{\onesp \cap \Omega} (f \circ \pi_L) (g \circ \pi_R) \right| \leq C \min\{ ||f||_{L^1(\Lsp)} ||g||_{L^\infty(\Rsp)}, ||f||_{L^\infty(\Lsp)} ||g||_{L^1(\Rsp)} \} \]
 for all $f$ and $g$ because the integrals
 \[ \int_{\pi_L^{-1}(x) \cap \Omega} d \mu^{\pi_L} \mbox{ and } \int_{\pi_R^{-1}(x) \cap \Omega} d \mu^{\pi^L} \]
 will be finite and uniformly bounded as a function of $x \in \pi_L(\Omega)$ and $\pi_R(\Omega)$ respectively (which will both be compact as well). Boundedness of $T_{GF}$ from $L^2$ to $L^2$ (the main technical hypothesis of Lemma \ref{mainlemma}) when restricted to integration over $\Omega$ must follow by the Schur test.

 Returning to \eqref{tdef} and \eqref{tsdef}, these operators will be analyzed via the generalized $T T^* T$ inequality \eqref{refined}, which requires study of the more elaborate object
 \begin{align*}
 \int_{\Rsp} (T_{GF} T_{G'F'}^* T_{Gf} f) g d \mu_{\Rsp} & = \int_{\Lsp} ( T_{G'F'}^* T_{GF} f) (T_{GF}^* g) d \mu_{\Lsp}   = B_{G'F'} ( T_{GF}^* g, T_{GF} f). 
 \end{align*}
In terms of integration, this last object may be expressed in terms of an integral with respect to some measure $d \mu$ of smooth, nonvanishing density on the space
\begin{equation}
\begin{split}
 \threesp := \left\{ (m^l,m^c,m^r) \in \onesp \right. & \times \onesp \times \onesp 
 \\
 & \left| \ \vphantom{\onesp} \pi_L(m^l) = \pi_L(m^c) \mbox{ and } \pi_R(m^r) = \pi_R(m^c)\right\}.
 \end{split} \label{imanifold} \end{equation}
For any $p := (m^l,m^c,m^r) \in \threesp$, let $\pi^j(p) := m^j$ for any superscript $j=l,c,r$.  Likewise, define $\pi^{i}_j := \pi_j \circ \pi^i$ for any $i=L,R$ and any $j=l,c,r$. One may expand $B_{G'F'} ( T_{GF}^* g, T_{GF} f)$ using \eqref{ifgdef} to write $B_{G'F'}$ as an integral over $\mathcal M$ and then write $T_{GF}^* g$ and $T_{GF} f$ in terms of \eqref{tsdef} and \eqref{tdef}, respectively, to conclude that $B_{G'F'} ( T_{GF}^* g, T_{G'F'} f)$ equals
\begin{equation}
B^{(3)}_{G'F'}(g,f) :=  \int_{\threesp} \! \! \left((g \chi_G) \circ \pi_R^l \right) \left( \chi_{F'} \circ \pi_L^c \right) \left(  \chi_{G'} \circ \pi_R^c \right) \left( (f \chi_F) \circ \pi_L^r \right)  d \mu \label{quadlinear}
 \end{equation}
 when $d \mu$ is simply taken to be the measure generated by the integrals over ${\mathcal M}$ and the fibers of $\pi_L$ and $\pi_R$.

\subsection{Construction of vectors tangent to the incidence manifold $\threesp$}

Any tangent vector $Z \in T_p( \threesp)$ is uniquely determined by the triple \label{geometry2c}
\[ \left. \ang{d \pi^l(Z), d \pi^c(Z), d \pi^r(Z))} \right|_p \in T_{\pi^l(p)}({\mathcal M}) \times T_{\pi^c(p)}({\mathcal M}) \times T_{\pi^r(p)}({\mathcal M}), \]
and any such triple identifies a tangent vector exactly when it satisfies the compatibility conditions $d \pi^l_L(Z) = d \pi^c_L(Z)$ and $d \pi^r_R(Z) = d \pi^c_R(Z)$ (here and throughout the rest of the paper, brackets will be used to represent elements of a Cartesian product of vector spaces). In other words, if one wishes to find $U, V$, and $W$ so that
\[ \left. \ang{ U,V,W } \right|_p  \in T_{\pi^l(p)}({\mathcal M}) \times T_{\pi^c(p)}({\mathcal M}) \times T_{\pi^r(p)}({\mathcal M}) \]
is tangent to $\threesp$, one only needs to verify that
\begin{equation}  d \pi_L(U) = d \pi_L(V) \mbox{ and } d \pi_R(V) = d \pi_R(W). \label{constr} \end{equation}
Since both $d \pi_L$ and $d \pi_R$ are everywhere surjective, given any one of $U,V$, or $W$ it is always possible to solve \eqref{constr} for the other two, but the solution is never unique. Note, for example, that when $\pi^l(p) = \pi^c(p)$, $U = V$ is always possible; likewise when $\pi^r(p) = \pi^c(p)$, $V = W$ is always possible. To consistently choose solutions, let us first fix at each point $p \in \threesp$  a map 
\[ \inc_p : T_{\pi^c(p)}(\onesp) / (\ker d \pi_L + \ker d \pi_R) \rightarrow T_p(\threesp) \]
 such that for every $v \in T_{\pi^c(p)}(\onesp) / (\ker d \pi_L + \ker d \pi_R)$ we have
 \begin{equation} v = d \pi^c \left(\inc_p v \right) \mbox{ modulo } \left. \left(\ker d \pi_L + \ker d \pi_R \right) \right|_{\pi^c(p)}. \label{choices} \end{equation}
 Such a map $\inc_p$ can easily be constructed (for example) by choosing any maximal set of tangent vectors $\{Z_i\}$ of $\threesp$ at $p$ whose ``center parts'' $d \pi^c(Z_i)$ are linearly independent modulo $\ker d \pi_L + \ker d \pi_R$ and defining $\inc_p$ to send $d \pi^c(Z_i)$ (modulo the sum of kernels) to $Z_i$ for each $i$.
 To see how $\inc_p$ can be used to consistently construct tangent vectors, let us show that for any $U \in T_{\pi^l(p)}({\mathcal M})$ there must exist unique choices of $v \in T_{\pi^c(p)}(\onesp) / (\ker d \pi_L + \ker d \pi_R) $ and $V_R \in \ker d \pi_R$ so that 
\begin{equation} \left.  \inc_p^l U \right|_p := \ang{U, d \pi^c \inc_p v + V_R, d \pi^r \inc_p v} \label{hardest1} \end{equation}
 is tangent to $\threesp$ at $p$. Because $d \pi_R^c \inc_p = d \pi_R^r \inc_p$ and $d \pi_R(V_R) = 0$, it suffices to show that $d \pi_L(U) = d \pi_L ( d \pi^c \inc_p v + V_R)$. After counting dimensions, we need only show that $d \pi_L( d \pi^c \inc_p v + V_R) = 0$ implies $v = 0$ and $V_R = 0$. But $d \pi_L( d \pi^c \inc_p v + V_R) = 0$ implies $d \pi^c \inc_p v \in \ker d \pi_L + \ker d \pi_R$, which by \eqref{choices} implies that $v = 0$. Then $v=0$ forces $V_R = 0$ as well because $\ker d \pi_L \cap \ker d \pi_R$ is trivial. By similar reasoning, for any $W \in T_{\pi^r(p)}(\onesp)$, there exist unique $v \in T_{\pi^c(p)}(\onesp) / (\ker d \pi_L + \ker d \pi_R)$ and $V_L \in \ker d \pi_L$ so that
 \begin{equation} \left. \inc_p^r  W \right|_p := \ang{ d \pi^l \inc_p v, d \pi^c \inc_p v + V_L, W} \in T_p(\onesp). \label{hardest2} \end{equation}
 Two important special cases of these constructions include the vectors
%
%
\begin{align}
\left. \inc_p^l X_L \right|_p  :=  \left. \ang{ X_L , 0 , 0} \right|_p \mbox{ and } 
 \left. \inc_p^r X_R \right|_p  := \left. \ang{ 0, 0,  X_R} \right|_p, \label{simplests} 
\end{align}
when $X_L$ belongs to $\ker d \pi_L$ at $\pi^l(p)$ and when $X_R$ belongs to $\ker d \pi_R$ at $\pi^r(p)$.  Another very important set of linear maps to consider in this direction have the forms
\begin{align}
 {\mathcal C}^l_p : T_{\pi^l(p)}(\onesp) \rightarrow T_{\pi^c(p)}(\onesp) / (\ker d \pi_L + \ker d \pi_R),  \label{lproj} \\
  {\mathcal C}^r_p : T_{\pi^r(p)}(\onesp) \rightarrow T_{\pi^c(p)} (\onesp) / (\ker d \pi_L + \ker d \pi_R), \label{rproj}
\end{align}
and are defined so that ${\mathcal C}^l_p U$ is the equivalence class of $d \pi^c \inc_p^l U$ and ${\mathcal C}^r_p W$ is the equivalence class of $d \pi^c \inc_p^r W$ (modulo $\ker d \pi_L + \ker d \pi_R$). It so happens that these maps ${\mathcal C}^l_p $ and ${\mathcal C}^r_p$ are independent of the choice of $\inc_p$, since, for example,
\[ \ang{ U , d \pi^c \inc_p v + V_R, d \pi^r \inc_p v} - \ang{U,d \pi^c \inc'_p v' + V_R', d \pi^r \inc'_p v'} \]
is tangent to $\threesp$, so by \eqref{constr} it must be the case that
\[ 0 = d \pi_L ( (d \pi^c \inc_p v - d \pi^c \inc_p' v') + V_R - V_R'), \]
which is to say that $d \pi^c \inc_p v - d \pi^c \inc_p' v'$ vanishes modulo $\ker d \pi_L + \ker d \pi_R$. A satisfying consequence of this observation is that
in formulas \eqref{hardest1} and \eqref{hardest2}, we have $v = {\mathcal C}_p^l U$ and $v = {\mathcal C}_p^r W$, respectively, since, in the case of ${\mathcal C}_p^l U$, $d \pi^c \inc_p v$ is equivalent to $v$ modulo $\ker d \pi_L + \ker d \pi_R$ by definition of $\inc_p$ and, also modulo the sum of kernels, is equal to ${\mathcal C}_p^l U$ by definition.

This section concludes with a calculation demonstrating how the maps ${\mathcal C}_p^l$ and ${\mathcal C}_p^r$ encode the Lie algebra generated by the vector fields $X_L$ and $X_R$. This will be an important piece of the variable coefficient Theorems \ref{variable5} and \ref{variableasym}.
\begin{lemma}
Let ${\mathcal C}_p^l$ and ${\mathcal C}_p^r$ be the maps \eqref{lproj} and \eqref{rproj} which map vectors at $\pi^l(p)$ and $\pi^r(p)$, respectively, to vectors at $\pi^c(p)$ modulo the kernels $d \pi_L$ and $d \pi_R$
If $V$ is any smooth vector field on $\onesp$, ${\mathcal C}_p^l V$ is a smooth function on the manifold which is vector-valued in a vector space depending on $\pi^c(p)$. Consequently, along the submanifold where $\pi^c(p)$ is constant, ${\mathcal C}_p^l V$ can be intrinsically differentiated (i.e., independently of any choice of basis or coordinates).
Likewise ${\mathcal C}_p^r V$ can be intrinsically differentiated along the submanifold where $\pi^c(p)$ is constant. In particular, if $X_L$ and $X_R$ are any vectors in the kernel of $d \pi_L$ and $d \pi_R$, respectively, it must be the case that
\begin{align}
 \left. X_L \right|_{\pi^l(p)} {\mathcal C}_p^l V & = {\mathcal C}_p^l [X_L,V], \label{leftcurv} \\
 \left. X_R \right|_{\pi^r(p)} {\mathcal C}_p^r V & = {\mathcal C}_p^r [X_R,V]. \label{rightcurv}
 \end{align}
 \end{lemma}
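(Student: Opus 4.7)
We first observe that the image of ${\mathcal C}^l_p$ lies in a vector space that depends only on $\pi^c(p)$, so on any submanifold $\Sigma \subset \threesp$ along which $\pi^c$ is constantly equal to some fixed $m^c \in \onesp$, the assignment $p \mapsto {\mathcal C}^l_p V|_{\pi^l(p)}$ is a function into the single, fixed vector space
\[ Q := T_{m^c}(\onesp) \big/ \bigl( \ker d\pi_L|_{m^c} + \ker d\pi_R|_{m^c} \bigr), \]
and its directional derivatives along $\Sigma$ therefore make unambiguous sense without reference to any choice of basis, coordinates, or connection.

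Next, we identify ${\mathcal C}^l_p V|_{\pi^l(p)}$ by an intrinsic formula. Since $\ker d\pi_L \cap \ker d\pi_R = 0$, the induced map
\[ \overline{d\pi_L} : Q \longrightarrow T_{\pi_L(m^c)}(\Lsp) \big/ d\pi_L\bigl(\ker d\pi_R|_{m^c}\bigr), \qquad v \longmapsto \bigl[\,d\pi_L(v^*)\,\bigr] \]
(with $v^*$ any lift of $v$ to $T_{m^c}(\onesp)$) is a well-defined linear isomorphism between vector spaces of common dimension $n - d_L - d_R$; this is essentially a repackaging of the existence/uniqueness argument following \eqref{hardest1}. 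Tracing the construction of $\inc^l_p U$ produces the identity
\[ {\mathcal C}^l_p V|_{\pi^l(p)} = \overline{d\pi_L}^{\,-1}\Bigl( \bigl[\,d\pi_L\, V|_{\pi^l(p)}\,\bigr] \Bigr), \]
because $d\pi_L$ of the center part of $\inc^l_p V$ must agree with $d\pi_L(V|_{\pi^l(p)})$ modulo $d\pi_L(\ker d\pi_R|_{m^c})$. Since $\overline{d\pi_L}^{\,-1}$ is a fixed linear map in $m^l := \pi^l(p)$, the computation of $X_L|_{m^l} {\mathcal C}^l_p V$ reduces to computing the $X_L$-derivative of the $T_{\pi_L(m^c)}(\Lsp)$-valued map $m^l \mapsto d\pi_L(V|_{m^l})$.

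For this last derivative, I would invoke the local flow $\phi_t$ of $X_L$ at $m^l$: because $X_L \in \ker d\pi_L$, we have $\pi_L \circ \phi_t = \pi_L$ on a neighborhood, so $d\pi_L \circ d\phi_{-t} = d\pi_L$. This lets us rewrite
\[ d\pi_L\bigl(V|_{\phi_t(m^l)}\bigr) = d\pi_L\Bigl( d\phi_{-t}\bigl( V|_{\phi_t(m^l)} \bigr) \Bigr), \]
whose right-hand side is the image, under the fixed linear map $d\pi_L|_{m^l}$, of a smooth curve in the single vector space $T_{m^l}(\onesp)$ whose $t$-derivative at $0$ is by definition the Lie derivative $L_{X_L} V = [X_L, V]|_{m^l}$. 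Applying $\overline{d\pi_L}^{\,-1}$ to both sides yields exactly ${\mathcal C}^l_p[X_L, V]$, which is \eqref{leftcurv}. The identity \eqref{rightcurv} follows by the mirror argument with $L \leftrightarrow R$ and $l \leftrightarrow r$. The only real obstacle is the conceptual one of recognizing that differentiation of the quotient-valued object ${\mathcal C}^l_p V$ is governed by a fixed linear map applied to differentiation of $d\pi_L(V)$ along a $\pi_L$-fiber, and then that the latter naturally produces a Lie bracket precisely because the flow of $X_L \in \ker d\pi_L$ preserves $\pi_L$; once this is in place, no further feature of $\threesp$ or of the splitting $\inc_p$ enters the argument.
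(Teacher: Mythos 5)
Your proof is correct, and it reaches the identity \eqref{leftcurv} by a route that is organized quite differently from the paper's, even though the underlying mechanism is the same. The paper proves \eqref{leftcurv} by keeping the splitting $\inc_p$ in play: it writes $d\pi_L(V|_{m^l}) = d\pi_L(d\pi^c\inc_p^l V + X_R')$, tests everything against pullbacks $f\circ\pi_L$, uses $X_L(V(f\circ\pi_L)) = [X_L,V](f\circ\pi_L)$ (valid because $X_L$ annihilates $f\circ\pi_L$), and then tracks error terms $X_R'$, $X_R''$, $X_L'$ valued in the two kernels until they can be absorbed into the quotient. You instead quotient at the outset: your observation that $\overline{d\pi_L}: Q \to T_{\pi_L(m^c)}(\Lsp)/d\pi_L(\ker d\pi_R|_{m^c})$ is a well-defined isomorphism (injectivity being exactly the dimension count following \eqref{hardest1}) and that ${\mathcal C}_p^l V = \overline{d\pi_L}^{\,-1}([\,d\pi_L V\,])$ correctly identifies ${\mathcal C}_p^l$ as a fixed linear map, independent of $m^l$ and of the choice of $\inc_p$, postcomposed with $m^l\mapsto d\pi_L(V|_{m^l})$; this eliminates all of the paper's error-term bookkeeping. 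The remaining step, that the fiberwise derivative of $d\pi_L(V|_{m^l})$ along $X_L$ is $d\pi_L([X_L,V]|_{m^l})$, you obtain from the flow identity $d\pi_L\circ d\phi_{-t} = d\pi_L$ and the flow characterization of the Lie derivative, whereas the paper gets the same fact by the test-function computation. Both arguments hinge on the same two facts (the expressibility of ${\mathcal C}_p^l$ through $d\pi_L$, and the appearance of the bracket because the flow of $X_L\in\ker d\pi_L$ preserves $\pi_L$), so neither is more general; what your version buys is a manifestly basis- and $\inc_p$-independent derivation, while the paper's version stays closer to the explicit formulas \eqref{hardest1}--\eqref{rproj} that are reused in the later curvature computations. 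One small point worth making explicit in either version: to form $[X_L,V]$ and the flow $\phi_t$ you must extend the vector $X_L$ to a local section of $\ker d\pi_L$, and one should note that ${\mathcal C}_p^l[X_L,V]$ is independent of that extension (which follows since $d\pi_L(W)\equiv 0$ for any such extension forces $d\pi_L([W,V]|_{m^l})=0$ whenever $W|_{m^l}=0$); the paper elides this as well.
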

\begin{proof}
We will give the calculation for ${\mathcal C}_p^l$ only, as the calculation for ${\mathcal C}_p^r$ is completely symmetric. Furthermore, since $\pi^c(p)$ is constant as $\pi^l(p)$ varies in the direction $X_L$, any vector-valued function with values in $\left. \ker d\pi_L + \ker d \pi_R \right|_{\pi^c(p)}$ will have derivatives of all orders belonging to that same vector subspace. Therefore it suffices to show that
\begin{equation} \left. X_L \right|_{m^l} \left( \left. (d \pi^c \inc_p^l V) \right|_{m^c} \right) = \left. (d \pi^c \inc_p^l [X_L,V]) \right|_{m^c} + X_R''' + X_L'\label{derivsuf} \end{equation}
where $X_R''' \in \ker d \pi_R$ at $m^c$ and $X_L' \in \ker d \pi_L$ at $m^c$.

Given a smooth vector field $V$, the formula \eqref{hardest1} together with the constraints \eqref{constr} implies the existence of a smooth vector function $X_R'$ with values in $\ker d \pi_R$ at $\pi^c(p)$ such that
\[ d \pi_L(\left. V \right|_{\pi^l}(p)) = d \pi_L( d \pi^c \inc_p^l V + X_R'). \]
Let us denote the point $\pi^l(p)$ by $m^l$ and regard $\pi^c(p)$ and $\pi^r(p)$ as constants. Then for any smooth function $f$ on $\Lsp$,
\begin{align*}
 \left. X_L \right|_{m^l} \left( \left. V \right|_{m^l} \left. f \circ \pi_L  \right|_{m^l} \right) & = \left. [ X_L, V ] \right|_{m^l} \left. f \circ \pi_L \right|_{m^l}
 \end{align*}
 since $X_L$ belongs to the kernel of $d \pi_L$.  However,
 \begin{align*} \left. V \right|_{m^l} \left. f \circ \pi_L \right|_{m^l} &  = \left.(d \pi_L V)\right|_{\pi_L(m^l)} \left. f \right|_{\pi_L(m^l)} 
 \\ & = \left. d \pi_L ( d \pi^c \inc_p^l V + X_R')\right|_{\pi_L(m^l)} \left. f \right|_{\pi_L(m^l)} \\
 & = \left. d \pi_L ( d \pi^c \inc_p^l V + X_R')\right|_{\pi_L(m^c)} \left. f \right|_{\pi_L(m^c)} \\
 & = \left. ( d \pi^c \inc_p^l V + X_R')\right|_{m^c} \left. f \circ \pi_L \right|_{m^c},
 \end{align*}
 so it must be the case that
\[ \left. X_L \right|_{m^l} \left( \left. ( d \pi^c \inc_p^l V + X_R')\right|_{m^c} \left. f \circ \pi_L \right|_{m^c} \right) = \left. [ X_L, V ] \right|_{m^l} \left. f \circ \pi_L \right|_{m^l}. \]
Now the only dependence of the function $\left. ( d \pi^c \inc_p^l V + X_R')\right|_{m^c} \left. f \circ \pi_L \right|_{m^c}$ on $m^l$ is through the vector-valued function $\left. ( d \pi^c \inc_p^l V + X_R')\right|_{m^c}$ itself.
Therefore we must have that
\[ \left[ \left. X_L \right|_{m^l} \left( \left. ( d \pi^c \inc_p^l V + X_R')\right|_{m^c}  \right) \right] \left. f \circ \pi_L \right|_{m^c} = \left. [ X_L, V ] \right|_{m^l} \left. f \circ \pi_L \right|_{m^l},\]
which is to say that
\begin{align*}
 d \pi_L \left[ \left. X_L \right|_{m^l} \left( \left. ( d \pi^c \inc_p^l V + X_R')\right|_{m^c}  \right) \right]  & = d \pi_L \left. [ X_L, V ] \right|_{m^l} \\
 & = d \pi_L ( \left. d\pi^c \inc_p^l [X_L,V] \right|_{m^c}  + \left. X_R'' \right|_{m^c})
 \end{align*}
for some smooth vector-valued function $X_R''$ with values in $\ker d \pi_R$. Thus
\begin{align*}
  \left. X_L \right|_{m^l} \left( \left. ( d \pi^c \inc_p^l V + X_R')\right|_{m^c}  \right) 
 & = \left. ( d \pi^c \inc_p^l [X_L,V] + X_R'')\right|_{m^c} + \left. X_L' \right|_{m^c},
 \end{align*}
for some $X_L'$ with values in $\ker d \pi_L$, which implies \eqref{derivsuf} when we set $X_R''' := X_R'' - \left. X_L \right|_{m^l}  X_R'$, since the derivative of a function with values in $\ker d \pi_R$ is also in $\ker d \pi_R$.
\end{proof}

\section{Reduction of geometric averages to sublevel set estimates}

\label{reductionsec0}

Recall that the main goal is to prove the inequality \eqref{btrick}, where the left-hand side is now given by the bilinear form \eqref{quadlinear} acting on the characteristic functions $\chi_G$ and $\chi_F$. The first term on the right-hand side comes from an application of a coarea-type formula, which is essentially the only formula one can appeal to when trying to prove some approximate boundedness of \eqref{quadlinear} on $L^1(\Lsp) \times L^1(\Rsp)$. Unfortunately, such boundedness does not actually hold. However, we may regard the upcoming quantity $\mathcal K$ identified in \eqref{keq} as a Jacobian which governs finiteness of the functional. We will use $\mathcal K$ to break the functional \eqref{quadlinear} into two pieces. On the first piece, we will be able to regard $\mathcal K$ as essentially large (although we will not decompose directly in terms of the value of $\mathcal K$, but rather a slightly more elaborate function depending on it). The second piece will reduce to a sublevel set functional which, if bounded, gives the second term on the right-hand side of \eqref{btrick}.

\subsection{Projection and a coarea-type formula for $T T^* T$} 

In this section we consider the effect in the bilinear functional \eqref{quadlinear} for $TT^*T$ of placing both $g$ and $f$ in $L^1$. Unfortunately, the result is not always finite, but we will calculate the Jacobian-type which governs finiteness. The key is to understand and quantify the degeneracy of the map $\Pi : \threesp \rightarrow \Lsp \times \Rsp$ given by
 \begin{equation} \Pi(p) := (\pi_L^r(p),\pi_R^l(p)). \label{bigpi} \end{equation}
 For convenience, let $d_L$ and $d_R$ be the dimensions of the kernels of $d \pi_L$ and $d \pi_R$, i.e., $d_L := n_R - \ell$ and $d_R := n_L - \ell$.
 Now $\Pi$ maps the incidence manifold $\threesp$, which is a space of dimension $2 d_L + 2 d_R + \ell$ into $\Lsp \times \Rsp$, which has dimension $n_L + n_R$. Consequently we expect the fibers of $\Pi$ to have dimension $\kappa := d_L + d_R - \ell$ (assuming that $\kappa \geq 0$) and
 by the coarea formula, we further expect
 \[ \int_{\threesp} F d \mu = \int_{\Lsp \times \Rsp} \left[ \int_{\Pi^{-1}(x_L,x_R)} F \frac{d {\mathcal H}^{\kappa}}{ J} \right] d \mu_\Lsp(x_L) d \mu_\Rsp(x_R) \]
 where $d {\mathcal H}^\kappa$ is the $\kappa$-dimensional Hausdorff measure and $J$ is the corresponding Jacobian. Technical justification aside, this turns out not to be a particularly convenient way to express the integral of $F$ on $\threesp$, since the Jacobian $J$ is somewhat difficult to analyze. For this reason, we will derive a slightly different expression for the integral which, among other things, has an explicit dependence on the maps which encode the generalized rotational curvature.

To that end, fix any vectors $X_L^i$ and $X_R^j$ at $\pi^l(p)$ belonging to the kernels of $d \pi_L$ for all $i=1,\ldots,d_L$ and $d \pi_R$ for all $j = 1,\ldots,d_R$, respectively. We have by \eqref{simplests} and \eqref{hardest1} that
\begin{equation} d \Pi (\inc_p^l X_L^{i}) = \ang{ 0, d \pi_R (X_L^i)} \mbox{ and } d \Pi(\inc_p^l X_R^{j}) = \ang{d \pi^r_L \inc_p^l X_R^{j}, 0} \label{calc1} \end{equation}
since $d \pi^r \inc_p^l X_L^i = 0$ and $d \pi^l_R \inc_p^l X_R^j = d \pi_R X_R^j = 0$.
In the calculations below, bold symbols are used to represent unsigned decomposable multivectors; when nonbold, enumerated variables have also been defined, bold will represent the ordered wedge product of the enumerated vectors. For example,
\begin{equation}
{\mathbf X}_L := X_{L}^{1} \wedge \cdots \wedge X_{L}^{d_L} \mbox{ and } {\mathbf X}_R := X_R^{1} \wedge \cdots \wedge X_R^{d_R}. \label{multi1} 
\end{equation}
We will assume that the vectors $X_{L}^i$ and $X_R^j$  satisfy the normalization condition
\begin{equation} \mu_{\onesp} ({\mathbf X}_L \wedge {\mathbf \Xi}) = \mu_\Lsp( d \pi_L {\mathbf \Xi}) \mbox{ and }  \mu_{\onesp} ( {\mathbf X}_R \wedge {\mathbf \Xi}) = \mu_\Rsp( d \pi_R {\mathbf \Xi}) \label{lrnorm} \end{equation}
where ${\mathbf \Xi}$ is any decomposable unsigned $n_L$-multivector  in the former case and  $n_R$-multivector in the latter. Recall that, once normalized in this way,  ${\mathbf X}_L$ and ${\mathbf X}_R$ are uniquely determined.
If $S_L \subset \{1,\ldots,d_L\}$ and $S_R \subset \{1,\ldots,d_R\}$, we will also define
\begin{equation} {\mathbf X}_L^{S_L} := X_L^{i_1} \wedge \cdots \wedge X_L^{i_{\#S_L}} \mbox{ and } {\mathbf X}_{R}^{S_R} := X_R^{j_1} \wedge \cdots \wedge X_R^{j_{\#S_R}} \label{multi2} \end{equation}
where $i_1,\ldots, i_{\#S_L}$ and $j_1,\ldots,j_{\#S_R}$ are enumerations of $S_L$ and $S_R$, respectively.

Next, if $X_L^i$ and $X_R^j$ are also defined at $\pi^r(p)$, still belonging to $\ker d \pi_L$ and $\ker d \pi_R$, respectively, and satisfy the  normalization \eqref{lrnorm}, then we have
\begin{equation} d \Pi (\inc_p^r X_L^{i}) = \ang{ 0, d \pi_R^l \inc_p^r X_L^i } \mbox{ and } d \Pi(\inc_p^r X_R^{j}) = \ang{d \pi_L(X_R^j),0} \label{calc2} \end{equation}
analogously to \eqref{calc1} as well as the formula
\begin{equation} d \Pi (\inc_p^r X_L^{i} - \inc_p {\mathcal C}_p^r X_L^i ) = \ang{ - d \pi_L^r \inc_p {\mathcal C}_p^r X_L^i, 0}. \label{calc3} \end{equation}
 Finally, fix any elements $v^1,\ldots,v^{\ell} \in T_{\pi^c(p)}(\mathcal M)/ (\ker d\pi_L + \ker d \pi_R)$.
 Assuming that $\#S_R + \#S_L = \ell$, by \eqref{calc1}, \eqref{calc2}, and \eqref{calc3}, it must be the case that
 \begin{align*}
 d \Pi & \left(  \inc_p^l {\mathbf X}_R^{S_R}  \wedge \inc_p^r (  {\mathbf X}_L^{S_L} \wedge {\mathbf X}_R ) \wedge ( \inc_p^l {\mathbf X}_L \wedge \inc_p {\mathbf v}) \right) \\
 & =  \ang{ d \pi_L^r \left( \inc_p^l {\mathbf X}_R^{S_R} \wedge  \inc_p {\mathcal C}_p^r {\mathbf X}_L^{S_L} \wedge \inc_p^r {\mathbf X}_R \right) , 0} \wedge \ang{0, d \pi_R^l ( \inc_p^l {\mathbf X}_L \wedge \inc_p {\mathbf v} )} 
 \end{align*}
since we may replace each $\inc_p^r X_L^{i}$ by $\inc_p^r X_L^{i} - \inc_p {\mathcal C}_p^r X_L^i $ so long as the $v^i$ span $T_{\pi^c(p)}(\mathcal M)/ (\ker d\pi_L + \ker d \pi_R)$.
In particular, it follows that
 \begin{align*}
 \mu &_{\Lsp \times \Rsp}  d \Pi  \left(  \inc_p^l {\mathbf X}_R^{S_R}  \wedge \inc_p^r (  {\mathbf X}_L^{S_L} \wedge {\mathbf X}_R ) \wedge ( \inc_p^l {\mathbf X}_L \wedge \inc_p {\mathbf v}) \right) \\
 & =  \mu_{\Lsp} d \pi_L^r \left( \inc_p^l {\mathbf X}_R^{S_R} \wedge  \inc_p {\mathcal C}_p^r {\mathbf X}_L^{S_L} \wedge \inc_p^r {\mathbf X}_R \right) \mu_{\Rsp} d \pi_R^l ( \inc_p^l {\mathbf X}_L \wedge \inc_p {\mathbf v} ) \\
 & = \mu_{\onesp} \left( {\mathbf X}_L \wedge {\mathbf X}_R \wedge d \pi^r \left(  \inc_p^l {\mathbf X}_R^{S_R} \wedge  \inc_p {\mathcal C}_p^r {\mathbf X}_L^{S_L} \right) \right) \\
 & \qquad \qquad \cdot  
  \mu_{\onesp} ( {\mathbf X}_L \wedge {\mathbf X}_R \wedge d \pi^l  \inc_p {\mathbf v} ) \\
   & = \mu_{\onesp} \left( {\mathbf X}_L \wedge {\mathbf X}_R \wedge d \pi^r \inc_p \left(  {\mathcal C}_p^l {\mathbf X}_R^{S_R} \wedge  {\mathcal C}_p^r {\mathbf X}_L^{S_L} \right) \right)
  \mu_{\onesp} ( {\mathbf X}_L \wedge {\mathbf X}_R \wedge d \pi^l  \inc_p {\mathbf v} ).
 \end{align*}

Observe that $\mu_{\onesp}( {\mathbf X}_L \wedge {\mathbf X}_R \wedge {\mathbf v})$ and $\mu_{\onesp} ( {\mathbf X}_L \wedge {\mathbf X}_R \wedge d \pi^l  \inc_p {\mathbf v} )$ are both densities (as a function of $\mathbf v$) on $T_{\pi^c(p)}(\onesp) / (\ker d \pi_L + \ker d \pi_R)$. In particular, they differ at most by a multiplicative constant, and moreover this constant must equal $1$ when $\pi^c(p) = \pi^l(p)$. By similar reasoning, it follows that the quantity
\[ C_\inc := \frac{\mu_{\mathcal M} \left({\mathbf X}_R \wedge {\mathbf X}_L \wedge d \pi^l \inc_p {\mathbf v}  \right) \mu_{\mathcal M} \left({\mathbf X}_R \wedge {\mathbf X}_L \wedge d \pi^r \inc_p {\mathbf v} \right) }{\left( \mu_{\mathcal M}( {\mathbf X}_L \wedge {\mathbf X}_R \wedge {\mathbf v}) \right)^2 } \]
is independent of ${\mathbf v}$, equal to $1$ on $\pi^l(p) = \pi^c(p) = \pi^r(p)$ and
 \begin{align*}
 \mu &_{\Lsp \times \Rsp}  d \Pi  \left(  \inc_p^l {\mathbf X}_R^{S_R}  \wedge \inc_p^r (  {\mathbf X}_L^{S_L} \wedge {\mathbf X}_R ) \wedge ( \inc_p^l {\mathbf X}_L \wedge \inc_p {\mathbf v}) \right) \\
  & = C_\inc \mu_{\onesp} \left( {\mathbf X}_L \wedge {\mathbf X}_R \wedge   C_p^l {\mathbf X}_R^{S_R} \wedge  {\mathcal C}_p^r {\mathbf X}_L^{S_L} \right)
  \mu_{\onesp} ( {\mathbf X}_L \wedge {\mathbf X}_R \wedge  {\mathbf v} ).
\end{align*}
For simplicity, let us normalize each $v^i$ so that $\mu_{\onesp} ( {\mathbf X}_L \wedge {\mathbf X}_R \wedge  {\mathbf v} ) = 1$.

 Now by the smooth coarea formula \eqref{coarea0}, there is a density $\mu^\Pi$ on the nondegenerate fibers of $\Pi$ (meaning only those points at which $d \Pi$ is surjective) which 
 \[ \int_{\threesp} F d \mu = \int_{\Lsp \times \Rsp} \left[ \int_{\Pi^{-1}(x_L,x_R)} F d \mu^{\Pi} \right] d \mu_{\Lsp}(x_L) d \mu_{\Rsp}(x_R) \]
 when $F$ is any integrable function equaling zero on the set where $d \Pi$ is not surjective. This density $\mu_{\Pi}$ must satisfy
 \[ \mu ( { \mathbf \Xi} \wedge {\mathbf P}) = \mu_{\Lsp \times \Rsp} ( d \Pi ( {\mathbf \Xi})) \mu^{\Pi} ( {\mathbf P}) \]
for any unsigned decomposable $\kappa$-multivector field ${\mathbf P}$ generated by the vectors tangent to the fibers of $\Pi$ and any
unsigned decomposable $(n_L+n_R)$-multivector field ${\mathbf \Xi}$.
It follows that
\begin{align*}
 \mu^{\Pi}({\mathbf P})&  \mu_{\onesp} \left( {\mathbf X}_L \wedge {\mathbf X}_R \wedge   C_p^l {\mathbf X}_R^{S_R} \wedge  {\mathcal C}_p^r {\mathbf X}_L^{S_L} \right)
\\
& = C_\inc^{-1} \mu  \left(  \inc_p^l {\mathbf X}_R^{S_R}  \wedge \inc_p^r (  {\mathbf X}_L^{S_L} \wedge {\mathbf X}_R ) \wedge ( \inc_p^l {\mathbf X}_L \wedge \inc_p {\mathbf v}) \wedge {\mathbf P} \right) \\
& = C_\inc^{-1} \mu_{\onesp} d \pi^c \left(  \inc_p^l {\mathbf X}_R^{S_R}  \wedge \inc_p^r   {\mathbf X}_L^{S_L}   \wedge \inc_p {\mathbf v} \wedge {\mathbf P} \right).
\end{align*}
Notice that the top line does not depend on the choice of $\inc$, so the density on fibers  as written on the bottom line must also be independent of the choice of $\inc$. Moreover, it is always possible to choose $\inc$, defined in terms of vector fields $Z_i$ as done following \eqref{choices}, for which $C_\inc$ is nonzero at any particular point (simply require $d \pi^c {\mathbf Z} \wedge {\mathbf X}_L \wedge {\mathbf X}_R \neq 0$ and likewise $d \pi^c {\mathbf Z} \wedge d \pi_L^{-1}|_{\pi^c(p)} (d \pi_L({\mathbf X}_R|_{\pi^l(p)})) \wedge {\mathbf X}_L \neq 0$ and $d \pi^c ( \mathbf Z) \wedge  d \pi_R^{-1}|_{\pi^c(p)} (d \pi_R ({\mathbf X}_L|_{\pi^r(p)})) \wedge {\mathbf X}_R \neq 0$). Thus we may assume that $C_\inc$ is nonvanishing. In particular, if we define the weight function
\begin{equation} \label{keq}
{\mathcal K} (p) := \left[ \sum_{S_L,S_R} \left[  \mu_{\mathcal M} \left( {\mathbf X}_L \wedge {\mathbf X}_R \wedge {\mathcal C}_p^l {\mathbf X}_{R}^{S_R} \wedge {\mathcal C}_p^r {\mathbf X}_{L}^{S_L} \right) \right]^2 \right]^{\frac{1}{2}}, 
\end{equation}
and density
\begin{equation} \mu_0 ( {\mathbf P}) := C_\inc^{-1}  \left[ \sum_{S_L,S_R}  \left[ \mu_{\onesp} d \pi^c \left(  \inc_p^l {\mathbf X}_R^{S_R}  \wedge \inc_p^r   {\mathbf X}_L^{S_L}   \wedge \inc_p {\mathbf v} \wedge {\mathbf P} \right) \right]^2 \right]^{\frac{1}{2}}, \label{densityeq}
\end{equation}
then we have the identity
\begin{equation} \int F d \mu = \int_{\Lsp \times \Rsp} \left[ \int_{\Pi^{-1}(x_L,x_R)}  F \frac{d \mu_0}{{\mathcal K}} \right]  d \mu_\Lsp (x_L) d \mu_\Rsp(x_R) \label{coarea} \end{equation}
whenever $F$ is supported away from the set where ${\mathcal K} = 0$. We also note explicitly that $\mu_0$ is a measure of smooth density that only depends on the projection $\pi^c$-projection of the fibers of $\Pi$ rather than on the full fiber in $\threesp$. 

\subsection{Proof of \eqref{btrick} up to geometric sublevel set estimates}

In this section we complete the $L^1-L^1$-type estimates for the $TT^*T$ functional \eqref{quadlinear} using the coarea formula \eqref{coarea} and then explain how \eqref{btrick} follows if estimates of a certain geometric sublevel set functional are known to hold.
Recall that we may regard the manifold ${\mathcal M}$ to be a subset of $\Lsp \times \Rsp$ by identifying the point $m \in \onesp$ with the point  $(\pi_L(m),\pi_R(m)) \in \Lsp \times \Rsp$. Likewise, for any point $x = (x_L,x_R) \in \Lsp \times \Rsp$, the points in the set $\pi^c (\Pi^{-1}(x))$ are identified with the set
\[ \set{ (y_L,y_R) \in \Lsp \times \Rsp}{ \exists m \in  \Pi^{-1}(x_L,x_R) ~ y_L = \pi^c_L(m), ~ y_R = \pi^c_R(m)}. \]
In general the set $\pi^c \Pi^{-1}(x)$ will be an immersed $\kappa$-dimensional submanifold, although it may be possible that $d \Pi$ is not surjective at some points $m \in \Pi^{-1}(x)$. To avoid confusion, let $\Pi^{-1}_0(x)$ consist only of those points in $\Pi^{-1}(x)$ at which $d \Pi$ is surjective, and define $\pi^c \Pi_0^{-1}(x)$ analogously to $\pi^c \Pi^{-1}(x)$.
In Section \ref{solsec} we will establish that the $\mu_0$-measure of each ball $B_r(x) \subset \Lsp \times \Rsp$ intersected with the fiber $\pi^{c} \Pi^{-1}_0 (x)$ is controlled by some uniform constant times $r^{\kappa}$ when $x$ ranges over any compact set. (When convenient, we will abuse notation as we have just done and consider $\mu_0$ to be defined either on $\threesp$ or on $\pi^c(\threesp)$). Consequently, when $F$ and $G$ are Borel measurable sets in $\Lsp$ and $\Rsp$ with bounded diameter, it follows that
\begin{align}
 \int_{\pi^{c} \Pi^{-1}_0 (x) \cap (F \times G)} &  \frac{d \mu_0(m)}{(\dist (m, x))^\kappa}  \leq \sum_{j \in \Z} 2^{-\kappa j} \int_{\pi^{c} \Pi^{-1}_0 (x) \cap (F \times G) \cap B_{2^j} (x)} d \mu_0  \nonumber \\ & \lesssim  \ln \left( 2 +\frac{\diam (F \times G)}{\dist(x, \pi^c \Pi^{-1}_0(x) \cap (F \times G))} \right) \label{intest}
 \end{align}
where $\dist(m,x)$ is distance in $\Lsp \times \Rsp$ as measured by the standard metric on that space.

Recall from \eqref{quadlinear} that
\begin{equation} B^{(3)}_{G'F'}(\chi_{G},\chi_{F}) = 
\int_{\threesp}  ( \chi_{F} \circ \pi^r_L) (\chi_{G'} \circ \pi^c_R) (\chi_{F'} \circ \pi^c_L) (\chi_{G} \circ \pi^l_R) d \mu. \label{ttst} \end{equation}
Fix any positive real number $\alpha$; we will estimate the right-hand side of \eqref{ttst} when the domain of integration is restricted to the set $$S_\alpha := \set{ p \in \threesp}{{\mathcal K}(p) (\dist(\Pi(p),\pi^c(p))) ^{-\kappa} > \alpha}.$$ It follows from \eqref{coarea} that
\begin{align}
 \int_{S_\alpha} &  ( \chi_{F} \circ \pi^r_L) (\chi_{G'} \circ \pi^c_R) (\chi_{F'} \circ \pi^c_L) (\chi_{G} \circ \pi^l_R) d \mu  \nonumber \\ 
  = & \int_{F \times G} \left[ \int_{\Pi^{-1}(x_L,x_R)}  (\chi_{G'} \circ \pi^c_R) (\chi_{F'} \circ \pi^c_L) \chi_{S_\alpha}  \frac{d \mu_{0}}{\mathcal K} \right] d \mu_{\Lsp}(x_L) d \mu_\Rsp(x_R) \label{ncalc1} 
  \end{align}
  which applies because $S_\alpha$ is disjoint from the set where $\mathcal K$ is zero.
  Ideally, we would like to show that the integrand in brackets on the right-hand side \eqref{ncalc1} is uniformly bounded above by $C \alpha^{-1}$. This turns out not to be the case, but it only fails logarithmically. To see this, we begin by further simplifying \eqref{ncalc1} by exploiting the definition of $S_\alpha$ to conclude that
  \begin{align*}
  \int_{S_\alpha} &  ( \chi_{F} \circ \pi^r_L) (\chi_{G'} \circ \pi^c_R) (\chi_{F'} \circ \pi^c_L) (\chi_{G} \circ \pi^l_R) d \mu \\ 
  \leq & \frac{1}{\alpha} \int_{F \times G} \left[ \int_{\pi^c \Pi^{-1}(x_L,x_R) \cap (F' \times G')}   \frac{d \mu_{0}(m)}{(\dist(m,(x_L,x_R))^\kappa} \right] d \mu_{\Lsp}(x_L) d \mu_\Rsp(x_R).
  \end{align*}
  We estimate the integrand using \eqref{intest} to conclude that
  \begin{align*}
  \int_{S_\alpha} &  ( \chi_{F} \circ \pi^r_L) (\chi_{G'} \circ \pi^c_R) (\chi_{F'} \circ \pi^c_L) (\chi_{G} \circ \pi^l_R) d \mu  \\
  \lesssim & \frac{1}{\alpha} \int_{F \times G} \left[  \ln \left( 2 +\frac{\diam (F \times G)}{\dist(x, \pi^c \Pi^{-1}_0(x) \cap (F \times G))} \right) \right] d \mu_{\Lsp} d \mu_\Rsp \\
   \lesssim & \frac{1}{\alpha} \int_{F \times G} \left[  \ln \left( 2 +\frac{\diam (F \times G)}{\dist(x, \onesp \cap (F \times G))} \right) \right] d \mu_{\Lsp} d \mu_\Rsp
 \end{align*}
 (where we can replace $F'$ and $G'$ on the right-hand side with $F$ and $G$ since $F' \subset F$ and $G' \subset G$).
 Assuming that $F$ and $G$ are supported in a sufficiently small ball, the set of points $x$ in $\Lsp \times \Rsp$ at distance $\delta$ to $\onesp \cap (F \times G)$ will have measure controlled by $\delta^{\ell} (\diam(F \times G))^{n_L + n_R - \ell}$. Consequently, fixing any $\delta > 0$ and breaking the integral into pieces on which $\dist(x, \onesp \cap (F \times G)) \geq \delta$ and $\dist (x, \onesp \cap (F \times G)) \in [2^{-j-1} \delta, 2^{-j} \delta]$ for $j = 0,1,2,\ldots$, we will have that
 \begin{align*}
  \int_{F \times G} & \left[  \ln \left( 2 +\frac{\diam (F \times G)}{\dist(x, \onesp \cap (F \times G))} \right) \right] d \mu_{\Lsp} d \mu_\Rsp \\ & \leq \ln \left( 2 + \frac{\diam (F \times G)}{\delta} \right) \left(  |F| |G| + \delta^{\ell} (\diam(F \times G))^{n_L + n_R - \ell} \right), 
  \end{align*}
  and choosing $\delta$ appropriately gives
  \begin{align*}
  \int_{S_\alpha} &  ( \chi_{F} \circ \pi^r_L) (\chi_{G'} \circ \pi^c_R) (\chi_{F'} \circ \pi^c_L) (\chi_{G} \circ \pi^l_R) d \mu \\ 
  & \lesssim \frac{1}{\alpha} |F| |G| \ln \left( 2 + \frac{ (\diam (F \times G))^{\frac{n_L + n_R}{\ell}}}{ (|F| |G|)^{\frac{1}{\ell}}} \right).
  \end{align*}
In particular, for  any $\epsilon > 0$, it must be the case that
\begin{equation}
\begin{split}
 \int_{S_\alpha}   ( \chi_{F} \circ \pi^r_L) (\chi_{G'} \circ \pi^c_R) & (\chi_{F'} \circ \pi^c_L) (\chi_{G} \circ \pi^l_R) d \mu \\ & \lesssim \frac{1}{\alpha} (\diam(F \times G))^{(n_L + n_R) \epsilon}  |F|^{1-\epsilon} |G|^{1-\epsilon} 
 \end{split}
 \label{part1}
 \end{equation}
uniformly in $\alpha$ and the sets $F,G,F'$ and $G'$. This is exactly the first term on the right-hand side of \eqref{btrick}, since the diameters of $F \times G$ is assumed to be bounded. More precisely, we have shown that
\begin{align*}
 B^{(3)}_{G'F'}(\chi_G,\chi_F) & \lesssim \alpha^{-1} |F|^{1-\epsilon} |G|^{1-\epsilon} + \\
&  \int_{\threesp \setminus S_\alpha} \! \! \left( \chi_G \circ \pi_R^l \right) \left( \chi_{F'} \circ \pi_L^c \right) \left(  \chi_{G'} \circ \pi_R^c \right) \left(  \chi_F \circ \pi_L^r \right)  d \mu,
\end{align*}
and what remains is to understand the integral over $\threesp \setminus S_\alpha$ as a sublevel set functional and use the bounds on that sublevel set functional to complete an inequality of the form \eqref{btrick}.
%
%
 For convenience, let us define
 \begin{equation} \Phi(p) := \frac{{\mathcal K(p)}}{(\dist(\pi^c(p),\Pi(p)))^{\kappa}}. \label{phieq} \end{equation}
 By Fubini, we may write
 \begin{align*}
 \int_{\Phi(p) \leq \alpha} &  ( \chi_{F} \circ \pi^r_L) (\chi_{G'} \circ \pi^c_R) (\chi_{F'} \circ \pi^c_L) (\chi_{G} \circ \pi^l_R) d \mu(p) \\ 
 & =  \int_{\onesp \cap \pi_L^{-1} (F') \cap \pi_R^{-1} (G') }  W_{\alpha,m^c} (\chi_{G} \circ \pi_R, \chi_{F} \circ \pi_L )  d \mu_{\mathcal M} (m^c)
 \end{align*}
 where $W_{\alpha,m^c}(g,f)$ is the bilinear sublevel set functional
\begin{equation} 
\begin{split} W_{\alpha,m^c}& (g,f) := \\ & \int_{\pi_R^{-1} \pi_R(m^c)}  \int_{\pi_L^{-1} \pi_L(m^c)}  \chi_{\Phi(p) \leq \alpha} g (m^l) f (m^r)  d \mu^{\pi_L} (m^l)  d \mu^{\pi_R} (m^r),   
\end{split} \label{sublevels} \end{equation}
where $p = (m^l,m^c,m^r)$.
In particular, if $W_{\alpha,m^c}$ satisfies a restricted weak-type estimate
\[ W_{\alpha,m^c} (\chi_{E^l}, \chi_{E^r} )  \lesssim \alpha^s |\mu^{\pi_L}(E^l)|^{\frac{1}{p_l}} | \mu^{\pi_R} (E^r)|^{\frac{1}{p_r}} \]
 uniformly in $\alpha$ and $m^c$ for all measurable sets $E^l$ and $E^r$ contained in $\pi_L^{-1} \pi_L(m^c)$ and $\pi_R^{-1} \pi_R(m^c)$ (i.e., belonging to the fibers of $\pi_L$ and $\pi_R$ passing through $m^c$), respectively, then it follows that
 \begin{align*}
 \int_{\Phi(p) \leq \alpha}&   ( \chi_{F} \circ \pi^r_L) (\chi_{G'} \circ \pi^c_R) (\chi_{F'} \circ \pi^c_L) (\chi_{G} \circ \pi^l_R) d \mu(p) \\
 & \lesssim \alpha^s \int |T_{GF}^* \chi_G |^{\frac{1}{p_l}} T^*_{G'F'} |T \chi_{GF} \chi_F|^{\frac{1}{p_r}},
 \end{align*}
which combines with \eqref{part1} to prove \eqref{btrick}. Thus the question of boundedness of the functional \eqref{bilinear} is reduced to the study of \eqref{sublevels}. Since we have made no use up to this point of any notion of nondegeneracy whatsoever, the geometry of \eqref{bilinear} is now entirely captured by the sublevel set operator \eqref{sublevels}. In particular, this means that if the bilinear form \eqref{bilinear} fails to exhibit any meaningful curvature, there is no reason to expect that \eqref{sublevels} will satisfy any nontrivial estimates.  The problem of proving boundedness of \eqref{sublevels} using the geometry of \eqref{bilinear} is taken up in the following section after an auxiliary lemma is established to help prove sublevel set functional inequalities in a systematic way.

\section{Applications and examples}
\label{calculationsec}
\subsection{An auxiliary lemma for establishing sublevel set estimates}

A particularly successful general strategy for proving restricted weak type estimates for sublevel set functionals like \eqref{sublevels} is to independently decompose both $m^l$ and $m^r$ into dyadic annuli with centers at the point $m^c$, since, in particular, we know that ${\mathcal K}(p)$ is expected to be identically zero when $m^l = m^c$ and when $m^r = m^c$. This is due to the fact that ${\mathcal C}_p^l$ and ${\mathcal C}_p^r$, respectively, reduce to the identity (modulo the sum of kernels), meaning that ${\mathcal C}_p^l {X}_R^j = 0$ when $m^l = m^c$ and ${\mathcal C}_p^r {X}_L^i = 0$ when $m^r = m^c$. The following lemma establishes a type of interpolation result which is particularly useful in this case. Roughly speaking, if there are two natural restricted weak-type inequalities which hold on each individual product of annuli (i.e., $||m^l - m^c|| \sim 2^i$ and $||m^r - m^c|| \sim 2^j$), then under appropriate technical hypotheses, the interpolated restricted weak-type inequalities hold not just for individual annuli, but for the sum over all annuli as well. Readers will note that the argument is essentially the same one that appears in the proof of the Marcinkiewicz interpolation theorem.
\label{sublevellemmasec} 
\begin{lemma}
Suppose $a_0,a_1,b_0,b_1$ are real numbers and $p_0,p_1,q_0,q_1 \in [1,\infty]$ are exponents satisfying \label{sublevellemma} 
\begin{equation} \det \left[ \begin{array}{cc} a_0 & p_0^{-1}  \\ a_1 & p_1^{-1} \end{array} \right] \neq 0,   \ \  \det \left[ \begin{array}{cc} b_0 & q_0^{-1}  \\ b_1 & q_1^{-1} \end{array} \right] \neq 0, \label{detcon} \end{equation}
\begin{equation} \frac{1}{p_k} + \frac{1}{q_k} \geq 1, ~ k = 0,1. \label{mconst} \end{equation}
Then for any $\theta \in (0,1)$, let $a_\theta,b_\theta, p_\theta,q_\theta$ be defined by the formulas 
\[ \frac{1}{p_\theta} := \frac{1-\theta}{p_0} + \frac{\theta}{p_1} \mbox{ and } \frac{1}{q_\theta} := \frac{1-\theta}{q_0} + \frac{\theta}{q_1}, \]
\[ a_\theta := (1-\theta) a_0 + \theta a_1 \mbox{ and }  b_\theta := (1-\theta) b_0 + \theta b_1. \]
There exists a constant $C$ depending only on the choices of each $a_k,b_k,p_k,q_k$, and $\theta$ such that
\begin{equation}
\begin{split}
 \sum_{i,j \in \Z}  \min & \left\{  A_0  2^{a_0 i + b_0 j} |f_i|^{\frac{1}{p_0}} |g_j|^{\frac{1}{q_0}}, A_1 2^{a_1 i + b_1 j} |f_i|^{\frac{1}{p_1}} |g_j|^{\frac{1}{q_1}} \right\}  \label{marc} \\
 &  \leq C A_0^{1-\theta} A_1^{\theta} \left( \sum_{i \in \Z} 2^{a_\theta p_\theta i} |f_i| \right)^{\frac{1}{p_\theta}} \left( \sum_{j \in \Z} 2^{b_\theta q_\theta j} |g_j| \right) ^{\frac{1}{q_\theta}} 
 \end{split}
 \end{equation}
for any nonnegative constants $A_0$ and $A_1$ and any sequences $\{f_i\}_{i \in \Z}, \{g_j\}_{j \in \Z}$ of real or complex numbers.

\end{lemma}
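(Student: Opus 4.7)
The plan is to adapt the standard proof of the Marcinkiewicz interpolation theorem to this discrete bilinear setting. The first step is a pair of homogeneity reductions. Replacing $A_0 \mapsto A_0 \mu^{\theta}$ and $A_1 \mapsto A_1 \mu^{\theta - 1}$ for an appropriately chosen $\mu > 0$ preserves the product $A_0^{1-\theta} A_1^{\theta}$ and lets me arrange $A_0 = A_1 = 1$. A simultaneous rescaling of $|f_i|$ and $|g_j|$ further normalizes
\[ \sum_{i \in \Z} 2^{a_\theta p_\theta i} |f_i| = \sum_{j \in \Z} 2^{b_\theta q_\theta j} |g_j| = 1, \]
reducing the claim to showing $\sum_{i,j} \min(X_0(i,j), X_1(i,j)) \leq C$ for a suitable constant $C$, where $X_k(i,j) := 2^{a_k i + b_k j} |f_i|^{1/p_k} |g_j|^{1/q_k}$.

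Next, I decompose the sequences dyadically by level sets: for $m, n \geq 0$, set
\[ I_m := \{ i \in \Z : 2^{a_\theta p_\theta i} |f_i| \in [2^{-m-1}, 2^{-m}) \}, \qquad J_n := \{ j \in \Z : 2^{b_\theta q_\theta j} |g_j| \in [2^{-n-1}, 2^{-n}) \}. \]
The normalization forces $|I_m| \leq 2^{m+1}$ and $|J_n| \leq 2^{n+1}$, and for $(i,j) \in I_m \times J_n$ one has the uniform pointwise bound
\[ X_k(i,j) \leq 2^{-m/p_k - n/q_k} \cdot 2^{\alpha_k i + \beta_k j}, \]
where $\alpha_k := a_k - a_\theta p_\theta / p_k$ and $\beta_k := b_k - b_\theta q_\theta / q_k$. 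A direct computation from the definitions of $a_\theta, b_\theta, p_\theta, q_\theta$ gives the identities $(1-\theta)\alpha_0 + \theta \alpha_1 = 0$ and $(1-\theta)\beta_0 + \theta \beta_1 = 0$, and combined with the determinant conditions \eqref{detcon} these force both $(\alpha_0, \alpha_1)$ and $(\beta_0, \beta_1)$ to be nonzero pairs of strictly opposite sign.

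To bound the contribution from each block $I_m \times J_n$, I apply $\sum_{I_m \times J_n} \min(X_0, X_1) \leq \min(\sum X_0, \sum X_1)$ and choose the endpoint index $k \in \{0,1\}$ on each block so that the relevant geometric sums are controlled by the dyadic cardinality bounds $|I_m| \leq 2^{m+1}$ and $|J_n| \leq 2^{n+1}$; the opposite-sign structure of $(\alpha_0, \alpha_1)$ and $(\beta_0, \beta_1)$ is precisely what makes such a choice available on every block. Summing the resulting contributions over $(m, n)$ produces a bi-infinite geometric series whose absolute convergence follows from condition \eqref{mconst} ($1/p_k + 1/q_k \geq 1$), yielding a finite $C$. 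The main obstacle is the bookkeeping of this dyadic interpolation: the correct choice of endpoint $k$ must depend on the sign region in which $(m,n)$ lies, exactly as in the classical Marcinkiewicz trick of splitting the level-set integral into two pieces and applying different weak-type bounds on each, and one must verify that the pieces actually assemble into geometrically convergent series rather than leaving divergent boundary contributions.
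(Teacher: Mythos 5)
Your overall plan---a level-set decomposition in the spirit of the classical Marcinkiewicz proof, rather than the paper's device of splitting the $(i,j)$-lattice along a line $j=\tau i+s$ of critical slope and bounding the two resulting triangular operators by a Schur test on $\ell^{p_k'}$---is viable, and your exponents $\alpha_k,\beta_k$ and their opposite-sign structure reproduce the key computation in the paper. But the central step has a genuine gap. On a block $I_m\times J_n$ you pass to $\min_k\sum_{I_m\times J_n}X_k$ and then want $\sum_{i\in I_m}2^{\alpha_k i}$ and $\sum_{j\in J_n}2^{\beta_k j}$ controlled by the cardinality bounds for one well-chosen $k$. No such choice exists: $I_m$ and $J_n$ have controlled cardinality but uncontrolled location in $\Z$, and interchanging $\min$ and $\sum$ destroys exactly the cancellation you need. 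Concretely, with $\alpha_0,\beta_0>0>\alpha_1,\beta_1$, take $I_m=\{-N,N\}$ and $J_n=\{-M,M\}$ with $\alpha_0N=\beta_0M$; then $\bigl(\sum_{i\in I_m}2^{\alpha_0 i}\bigr)\bigl(\sum_{j\in J_n}2^{\beta_0 j}\bigr)\approx 2^{2\alpha_0N}$ and the $k=1$ product is $\approx 2^{2|\alpha_1|N}$, both unbounded as $N\to\infty$, while the true block sum $\sum\min(X_0,X_1)$ is $\lesssim 2^{-m/p_\theta-n/q_\theta}$ because the large values of $X_0$ and of $X_1$ occur at different lattice points. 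The minimum must be exploited pointwise in $(i,j)$, not blockwise in $(m,n)$. The fact that closes the argument---and which your proposal never uses---is that your identities force $(\alpha_1,\beta_1)=-\tfrac{1-\theta}{\theta}(\alpha_0,\beta_0)$, so the two linear forms $\alpha_ki+\beta_kj$ are negatively proportional; hence the pointwise minimum of your two upper bounds equals $2^{-m/p_\theta-n/q_\theta}$ times a factor decaying geometrically in $|\alpha_0i+\beta_0j-u^*_{mn}|$ for a single crossing value $u^*_{mn}$, and a count of lattice points of $I_m\times J_n$ near that line yields the correct per-block bound $C\,2^{-m/p_\theta-n/q_\theta}\min(|I_m|,|J_n|)$. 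This is precisely the structure the paper exploits with its choice $\tau=-a_0/b_0$.

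Two further problems. First, the reduction to $A_0=A_1=1$ is circular as stated: replacing $(A_0,A_1)$ by $(A_0\mu^\theta,A_1\mu^{\theta-1})$ preserves the right-hand side of \eqref{marc} but changes the left-hand side, so the case $(1,1)$ does not imply the general case. Either carry $A_0,A_1$ along (they emerge as $A_0^{1-\theta}A_1^\theta$ from the crossing-point value above), or rescale the sequences $f\mapsto\lambda f$, $g\mapsto\mu g$, which multiplies the effective $A_k$ by $\lambda^{1/p_k}\mu^{1/q_k}$, and verify via \eqref{detcon} (plus integer index shifts when $p_0=p_1$ and $q_0=q_1$) that every ratio $A_0/A_1$ is reached up to a bounded factor. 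Second, the final sum over $(m,n)$ is not a convergent geometric series if you retain only $|I_m|\le 2^{m+1}$, $|J_n|\le 2^{n+1}$: the resulting bound $\sum_{m,n\ge0}2^{-m/p_\theta-n/q_\theta}\min(2^m,2^n)$ diverges on the diagonal at the endpoint $1/p_\theta+1/q_\theta=1$ allowed by \eqref{mconst}. You must instead use the mass constraints $\sum_m2^{-m}|I_m|\lesssim1$ and $\sum_n2^{-n}|J_n|\lesssim1$ furnished by your normalization; writing $|I_m|=2^m\epsilon_m$, the $m\le n$ part is $\lesssim\sum_m\epsilon_m\sum_{n\ge m}2^{(m-n)/q_\theta}\lesssim\sum_m\epsilon_m$, which is where \eqref{mconst} (giving $1/p_\theta'\le 1/q_\theta$) and $q_\theta<\infty$ actually enter. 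With these repairs your route does work, but as written the per-block step is the missing idea.
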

\begin{proof}
By the definition of $a_\theta$, $b_\theta$, $p_\theta$, and $q_\theta$, we have the identities
\begin{align*}
(1-\theta) \left( a_0 - \frac{a_\theta p_\theta}{p_0} \right) & + \theta \left( a_1 - \frac{a_\theta p_\theta}{p_1} \right) = 0, \\
(1-\theta) \left( b_0 - \frac{b_\theta q_\theta}{q_0} \right) & + \theta \left( b_1 - \frac{b_\theta q_\theta}{q_1} \right) = 0;
\end{align*}
by \eqref{detcon}, it is not possible to choose $p_0^{-1} = p_1^{-1} = 0$, so $p_\theta$ must be finite. Likewise $q_\theta < \infty$. Moreover it cannot be the case that both $a_0 - a_\theta p_\theta p_0^{-1}$ and $a_1 - a_\theta p_\theta p_1^{-1}$ are zero, since this would also force the first determinant \eqref{detcon} to be zero. Consequently neither is zero (since their convex combination vanishes). Likewise neither $b_0 - b_\theta q_\theta q_0^{-1}$ nor $b_1 - b_\theta q_\theta q_1^{-1}$ is zero, and since $\theta \in (0,1)$, one must be positive and the other negative. Rewriting the left-hand side of \eqref{marc}  in terms of sequences
\[ \tilde f_i := 2^{a_\theta p_\theta i} f_i \mbox{ and } \tilde g_j := 2^{b_\theta q_\theta j} g_j, \]
it suffices to assume that $a_\theta = b_\theta = 0$ and that none of $a_0,b_0,a_1,b_1$ is zero. This amounts to replacing $a_k$ by $a_k - \frac{a_\theta p_\theta}{p_k}$ and likewise for $b_k$. These changes preserve the value of the determinants \eqref{detcon}.
 By symmetry, it also suffices to assume that $b_0 > 0 > b_1$.

Let $\tau = - a_0 b_0^{-1}$, and let $s$ be any real number (to be fixed shortly). Now
\begin{align*}
 \sum_{j \in \Z} &  \min \left\{  A_0  2^{a_0 i + b_0 j} |f_i|^{\frac{1}{p_0}} |g_j|^{\frac{1}{q_0}}, A_1 2^{a_1 i + b_1 j} |f_i|^{\frac{1}{p_1}} |g_j|^{\frac{1}{q_1}} \right\} \\
= & A_0  2^{b_0 s} \! \! \sum_{j < \tau i+ s} A_0  2^{a_0 i + b_0 j- b_0 s}  |g_j|^{\frac{1}{q_0}} 
+ A_1 2^{b_1 s} |f_i|^{\frac{1}{p_1}} \! \! \sum_{j \geq \tau i + s} 2^{a_1 i + b_1 j - b_1 s}  |g_j|^{\frac{1}{q_1}}.
 \end{align*}
Consider the mappings $T_0$ and $T_1$ which act on sequences as follows:
\[ (T_0 e)_i := \sum_{j < \tau i +  s } 2^{a_0 i + b_0 j - b_0 s} e_j \mbox{ and } (T_1 e)_i := \sum_{j \geq \tau i +  s } 2^{a_1 i + b_1 j - b_1 s} e_j. \]
Using standard sum estimation techniques (together with the fact that $b_0 > 0 > b_1$), we have that
\begin{align*}
 || T_0 e||_\infty \leq&  \frac{||e||_\infty}{1-2^{-b_0}} , \ ||T_0 e||_1 \leq \frac{||e||_1}{1-2^{-|a_0|}} , \\ 
   || T_1 e||_\infty \leq & \frac{||e||_\infty}{1-2^{b_1}}  , \ ||T_1 e||_1 \leq \frac{||e||_1}{1-2^{-|a_1|}},
 \end{align*}
and, in particular, $T_i$ is bounded on $\ell^{p_i'}$ with constant independent of $s$, where $p_i'$ is dual to $p_i$. Therefore
\begin{align*}
\sum_{i \in Z} &  |f_i|^{\frac{1}{p_0}} \sum_{j < \tau i+s} 2^{b_0 (j-\tau i-s)} |g_j|^{\frac{1}{q_0}} \lesssim ||f||_1^{\frac{1}{p_0}} || |g|^{\frac{1}{q_0}} ||_{p_0'}^{\frac{1}{p_0'}} \leq ||f||_1^{\frac{1}{p_0}} || g ||_1^{\frac{1}{q_0}}, \\
\sum_{i \in \Z} & |f_i|^{\frac{1}{p_1}} \sum_{j \geq \tau i+s} 2^{b_1 (j-\tau i-s)} |g_j|^{\frac{1}{q_1}} \lesssim ||f||_1^{\frac{1}{p_1}} ||g||^{\frac{1}{q_1}}_1
\end{align*}
since $p_k' \geq q_k$ for $k = 0,1$. Therefore
\begin{align*}  \sum_{i,j \in \Z} &   \min \left\{  A_0  2^{a_0 i + b_0 j} |f_i|^{\frac{1}{p_0}} |g_j|^{\frac{1}{q_0}}, A_1 2^{a_1 i + b_1 j} |f_i|^{\frac{1}{p_1}} |g_j|^{\frac{1}{q_1}} \right\}\\ &  \lesssim A_0 2^{b_0 s} ||f||_1^{\frac{1}{p_0}} || g ||_1^{\frac{1}{q_0}} + A_1 2^{b_1 s} ||f||_1^{\frac{1}{p_1}} ||g||^{\frac{1}{q_1}}_1. 
\end{align*}
Optimizing over the choice of $s$ gives
\begin{align*}  \sum_{i,j \in \Z} &   \min \left\{  A_0  2^{a_0 i + b_0 j} |f_i|^{\frac{1}{p_0}} |g_j|^{\frac{1}{q_0}}, A_1 2^{a_1 i + b_1 j} |f_i|^{\frac{1}{p_1}} |g_j|^{\frac{1}{q_1}} \right\}\\ &  \lesssim \left(  A_0  ||f||_1^{\frac{1}{p_0}} || g ||_1^{\frac{1}{q_0}} \right)^{1-\theta}   \left( A_1 ||f||_1^{\frac{1}{p_1}} ||g||^{\frac{1}{q_1}}_1 \right)^{\theta}
\end{align*}
with $\theta = b_0/(b_0 - b_1)$, which happens to be the correct value of $\theta$  to give $b_\theta = 0$.
\end{proof}

A final remark about the lemma: although it will not be needed here, the constraint \eqref{mconst} can be weakened somewhat. In particular, if $p_\theta^{-1} + q_\theta^{-1} > 1$ for the chosen value of $\theta$, then one does not need to explicitly assume \eqref{mconst}, since one can instead apply the lemma using some convex combination of the estimates on the left-hand side of \eqref{marc}. It will always be possible in this setting to find two different convex combinations which automatically both satisfy \eqref{mconst} and still yield the same conclusion \eqref{marc} for the desired exponents $p_\theta$ and $q_\theta$.

\subsection{General bilinear averages}

We now begin the study of the geometric sublevel set operators \eqref{sublevels} in earnest. The first case to be considered corresponds to the setting of Theorem \ref{bilinearthm} for the averaging operators constructed from bilinear mappings.
Let  $Q \ : \  \R^{d_L} \times \R^{d_R} \rightarrow \R^{\ell}$ be any such bilinear map.  Let $\Omega \subset \R^{d_L + d_R + \ell}$ be compact, fix $\onesp$ to be any open set containing $\Omega$, and consider the bilinear functional 
\[ B_Q(f,g) := \int_{\Omega} f(y,z+Q(x,y)) g(x,z) dx dy dz. \]
By duality, this bilinear functional corresponds to the integral operator
\[ Tf(x,z) := \int_{\Omega_{x,z}} f(y,z+Q(x,y)) dy. \]
If we define $\pi_L (x,y,z) := (y,z+Q(x,y))$ and $\pi_R(x,y,z) := (y,z)$, one can easily identify vector fields $X_L^i$ and $X_R^j$ annihilated by $d \pi_L$ and $d \pi_R$, respectively:
\[ X_L^i := \frac{\partial}{\partial x_i} - Q(e_i,y) \cdot \nabla_z \mbox{ and } X_R^j := \frac{\partial}{\partial y_j}, \]
where the vectors $e_i$ and $e_j$ denote standard basis vectors in $\R^{d_L}$ and $\R^{d_R}$, respectively. On the space $\threesp$ we can calculate that
\begin{align*}
 d \pi_L^l \left( \frac{\partial}{\partial y_j^\ell} \right)  = & d \pi_L^c \left( \frac{\partial}{\partial y_j^c} + Q (x^l - x^c,e_j) \cdot \nabla_{z^c} \right), \\
 d \pi_R^r \left( \frac{\partial}{\partial x_i^r} - Q(e_i,y^r) \cdot \nabla_{z^r} \right) = & d \pi_R^c \left( \frac{\partial}{\partial x_i^c} - Q(e_i,y^c) \cdot \nabla_{z^c} \right. \\ & \qquad \qquad \left. \vphantom{\frac{\partial}{\partial x_i^c}} + Q(e_i,y^c-y^r) \cdot \nabla_{z^c} \right).
 \end{align*}
Since ${\mathcal C}_p^l$ and ${\mathcal C}_p^r$ from \eqref{lproj} and \eqref{rproj} are intrinsically defined modulo $\ker d \pi_L + \ker d \pi_R$ (not depending on $\inc_p$), it follows that
\[ {\mathcal C}_p^l (X_R^j) \sim  Q( x^l - x^c, e_j) \cdot \nabla_{z^c} \mbox{ and }  {\mathcal C}_p^r (X_L^i) \sim Q(e_i,y^c - y^r)  \cdot \nabla_{z^c}. \]
If we take the standard measure $\mu_{\onesp} = d x dy dz$, then by \eqref{keq} we have that $({\mathcal K}(p))^2$ is the sum of squares of all $\ell \times \ell$ determinants whose columns are of the form $Q(x^l - x^c, e_j)$ or $Q(e_i,y^c-y_r)$ as $e_j$ and $e_i$ range over all elements of the standard bases. Consequently if we define
\[ \Phi_Q(x,y) = \frac{\mathrm{Vol}( \{ Q( e_i, y) \}_{i=1}^{d_L}, \{ Q(x,e_j) \}_{j=1}^{d_R} )}{ ( |x|^2 + |y|^2)^{\frac{d_L + d_R - \ell}{2}}},  \]
then the sublevel set operator \eqref{sublevels} is nearly translation invariant under the map $(m^l,m^c,m^r) \mapsto (m^l + \tau, m^c + \tau, m^r + \tau)$, with the only failure of invariance coming implicitly through the fact that ${\onesp}$ may not contain all of $\R^{d_L + d_R + \ell}$. Consequently, to prove estimates for \eqref{sublevels} uniformly in $m^c$, it suffices to bound the fixed sublevel set operator
\begin{equation} W_{\alpha}(g,f) = \int_{\R^{d_L} \times \R^{d_R}}  \chi_{\Phi_Q(x,y) \leq \alpha} g(x^l) f(y^r) dx dy. \label{bilinearsub} \end{equation}
Moreover, even if $g$ and $f$ in \eqref{bilinearsub} are restricted to fixed neighborhoods of the origin, it will still be the case that $W_{\alpha,m^c}$ is dominated by $W_{\alpha}$ provided that the diameter of $\Omega$ is sufficiently small. Thus by \eqref{part1}, \eqref{sublevels}, and Lemma \ref{mainlemma}, Theorem \ref{bilinearthm} must hold.

\subsection{The maximal 2D quadratic surface in $\R^5$ and generalizations}
\label{reductionsec}

In the case of the bilinear functional
\begin{equation} B(f,g) :=  \int_{\Omega} f \left( x_1 + t_1, x_2 + t_2 , x_{3} + \frac{1}{2}  t_1^2 , x_4 + t_1 t_2 , x_5 + \frac{1}{2} t_2^2 \right) g(x) dx dt \label{b2d} \end{equation}
corresponding to the operator \eqref{modelmaxml}, we choose vector fields $X^i_L$ and $X^j_R$ as follows:
\[ X_L^1 := \frac{\partial}{\partial t_1} - \frac{\partial}{\partial x_1} - t_1 \frac{\partial}{\partial x_3} - t_2 \frac{\partial}{\partial x_4},   X_L^2 := \frac{\partial}{\partial t_2} - \frac{\partial}{\partial x_2} - t_1 \frac{\partial}{\partial x_4} - t_2 \frac{\partial}{\partial x_5}, \] 
\[ X_R^j := \frac{\partial}{\partial t_j}, j=1,2. \]
Using these vector fields, the maps ${\mathcal C}_p^l$ and ${\mathcal C}_p^r$, defined by \eqref{lproj} and \eqref{rproj}, give
\begin{align*}
 {\mathcal C}^l_p X^1_R & \sim (t_1^l - t_1^c) \frac{\partial}{\partial x_3^c} + (t_2^l - t_2^c) \frac{\partial}{\partial x_4^c}, ~ ~ {\mathcal C}^l_p X^2_R  \sim (t_1^l - t_1^c) \frac{\partial}{\partial x_4^c} + (t_2^l - t_2^c) \frac{\partial}{\partial x_5^c}, \\
  {\mathcal C}^r_p X^1_L & \sim (t_1^c - t_1^r) \frac{\partial}{\partial x_3^c} + (t_2^c - t_2^r) \frac{\partial}{\partial x_4^c}, ~ ~ {\mathcal C}^r_p X^2_L  \sim (t_1^c - t_1^r) \frac{\partial}{\partial x_4^c} + (t_2^c - t_2^r) \frac{\partial}{\partial x_5^c}
  \end{align*}
 (with $\sim$ meaning modulo $(\ker d \pi_L + \ker d \pi_R)|_{(x^c,t^c)}$). Consequently by \eqref{keq},
\begin{equation} {\mathcal K}(p) = \left( ||t^l - t^c||^2 + ||t^r - t^c||^2 \right)^\frac{1}{2} \left| \det \left[ \begin{array}{cc} t_1^l - t_1^c & t_1^r - t_1^c \\ t_2^l - t_2^c & t_2^r - t_2^c \end{array} \right] \right|. \label{keq2d} \end{equation}
The fibers of the map $\Pi$ defined by \eqref{bigpi} are generically one-dimensional; consequently $\kappa = 1$ and by \eqref{phieq} and \eqref{sublevels}, it suffices to study 
\begin{equation} W_{\alpha,m^c} (g,f) := \int_{\Omega_{m^c}} \chi_{|\det (t^l-t^c \ t^r-t^c)| \leq \alpha} g(t^l) f(t^r) dt^l dt^r. \label{sublev00} \end{equation}
In particular, we will use \eqref{marc} to show that
\begin{equation} W_{\alpha,m^c}(\chi_{E^l},\chi_{E^r}) \leq C \alpha |E^l|^{\frac{1}{2}} |E^r|^{\frac{1}{2}}. \label{sublev01} \end{equation}
Without loss of generality, we may assume $t^c = 0$.  For each $j \in \Z$, let $A_j \subset \R^2$ be the annulus $\set{ t \in \R^2}{ 2^{j-1} \leq ||t|| < 2^j}$. To prove \eqref{sublev01}, we break \eqref{sublev00} into a sum over dyadic annuli in $t^l$ and in $t^r$, at which point it suffices by \eqref{marc} with $\theta = \frac{1}{2}$ to prove that
\begin{equation}
\begin{split}
 \int_{ |\det(t^l \ t^r)| \leq \alpha} &  \chi_{E^l \cap A_i} (t^l) \chi_{E^r \cap A_j}(t^r) dt^l dt^r \\ & \leq C \alpha \min\{ 2^{i-j} |E^r \cap A_j|, 2^{-i+j} |E^l \cap A_i| \}.
\end{split} \label{maxq1}
\end{equation}
Both estimates on the right-hand side follow from Fubini's theorem (in the former case integrating over $t^l$ first and in the latter integrating over $t^r$ first) and the inequalities
\begin{align}
 \left| \set{t^l \in \R^2}{ |\det (t^l \ t^r )| \leq \alpha \mbox{ and } 2^{i-1} \leq ||t^l|| < 2^i} \right| & \leq C \alpha 2^{i} ||t^r||^{-1}, \label{msublev1} \\
 \left| \set{t^r \in \R^2}{ |\det (t^l \ t^r )| \leq \alpha \mbox{ and } 2^{j-1} \leq ||t^r|| < 2^j} \right| & \leq C \alpha 2^{j} ||t^l||^{-1}, \label{msublev2}
 \end{align}
 which hold trivially since, for fixed $y \in \R^2$, the set $\set{ x \in \R^2}{ |\det (x \ y)| \leq \alpha}$
 is simply those points at distance less than $\alpha ||y||^{-1}$ to the line through the origin with direction $y$. Thus \eqref{sublev01} follows from \eqref{maxq1} and Lemma \ref{sublevellemma}.

In light of \eqref{sublev01}, by Lemma \ref{mainlemma} together with the trivial estimates
\[ |B (f,g)| \leq C ||f||_1 ||g||_\infty \mbox{ and } | B(f,g)| \leq C ||f||_\infty ||g||_1 \]
(assuming the compactness of $\Omega$), we have that
\[ |B(f,g)| \leq C ||f||_{q_L} ||g||_{q_R} \]
whenever $(\frac{1}{q_L},\frac{1}{q_R})$ belongs to the interior of the triangle with vertices $(1,0)$, $(0,1)$, and $(\frac{5}{8}, \frac{5}{8})$.

Incidentally, the compactness of $\Omega$ can be easily removed using translation invariance: For any $j \in \Z^{5}$, if we let $\Omega_j := [j_1,j_1+1] \times \cdots \times[j_5,j_5+1] \times [-1,1]^2$, we have uniformly in $j$ that
\[ |B_{\Omega_j}(f,g)| \leq C ||f||_{q_L} ||g||_{q_R}, \]
which self-improves to the estimate
\[ |B_{\Omega_j}(f,g)| \leq C ||f \chi_{Q_j^*} ||_{q_L} ||g \chi_{Q_j}||_{q_R} \]
where $Q_j := [j_1,j_1+1] \times \cdots \times[j_5,j_5+1]$ and $Q_j^* := [ j_1 - 1, j_1 + 2] \times \cdots \times [j_5 - 1, j_5 + 2]$.  Therefore, recalling $\gamma_M$ from the notation of \eqref{modelmaxml}, it must be the case that
\begin{align*}
 \left| \int_{\R^5} g(x) \right. & \left. \int_{[-1,1]^2} f(x + \gamma_M(t)) g(x) dt dx \right| \leq \sum_{j \in \Z^5} |B_{\Omega_j} (f,g)| \\
 & \leq \sum_{j \in \Z^5} C ||f \chi_{Q_j^*} ||_{q_L} ||g \chi_{Q_j}||_{q_R} \\
 & \leq C \left( \sum_{j \in \Z^5} ||f \chi_{Q_j^*}||_{q_L}^{q_L} \right)^\frac{1}{q_L} \left( \sum_{j \in \Z^5} ||g \chi_{Q_j}||_{q_R}^{q_R} \right)^{\frac{1}{q_R}} \\
 & \lesssim ||f||_{q_L} ||g||_{q_R}
 \end{align*}
by H\"{o}lder's inequality together with the observation that $q_L^{-1} + q_R^{-1} \geq 1$. By the Marcinkiewicz interpolation theorem, this completes the claims of Theorem \ref{modelthm} for the maximal quadratic submanifold in $\R^5$ given by \eqref{modelmaxml}.

Let us now turn to the geometric setting of Theorem \ref{variable5}.
When the dimension of $\onesp$ is seven and $d_L = d_R = 5$, will say that the pair of projections $\pi_L$ and $\pi_R$ are {\bf nondegenerate} when at every point $m \in \onesp$, the following conditions hold:
\begin{itemize}
\item For any $v,v' \in \R^2$ with $\det (v \ v') \neq 0$, there is an $a_{v,v'} \in \R^2$ such that
\begin{equation} {\mathbf X}_L \wedge {\mathbf X}_R \wedge [ X_L^1, v \cdot X_R] \wedge [X_L^2, v \cdot X_R] \wedge [a_{v,v'} \cdot X_L, v' \cdot X_R] \neq 0. \label{leftid} 
\end{equation}
\item For any $u,u' \in \R^2$ with $\det (u \ u') \neq 0$, there is a $b_{u,u'} \in \R^2$ such that
\begin{equation} {\mathbf X}_L \wedge {\mathbf X}_R \wedge [u \cdot X_l, X_R^1] \wedge [u \cdot X_L, X_R^{2}] \wedge [u' \cdot X_L, b_{u,u'} \cdot X_R] \neq 0. \label{rightid}
\end{equation}
\end{itemize}
(Note that the condition is independent of the choice of bases $\{X^1_L,X_L^2\}$ and $\{X^1_R,X^2_R\}$ of $\ker d \pi_L$ and $\ker d \pi_R$.) Theorem \ref{variable5} asserts that, under the regularity assumptions of real analyticity, any pair of nondegenerate projections as defined above gives rise to a bilinear functional \eqref{bilinear} which is bounded for pairs of exponents $(q_L^{-1},q_R^{-1})$ in the interior of the triangle with vertices $(1,0), (0,1)$, and $(\frac{5}{8},\frac{5}{8})$.

The proof of Theorem \ref{variable5} proceeds as follows.
For each fixed $m^c \in \onesp$, we can coordinatize a small neighborhood of $(m^c,m^c,m^c) \in \threesp$ by exponentiating as follows: for $u \in \R^{d_L}$ and $v \in \R^{d_R}$, we define
\[ p(m^c,u,v) := (\exp(u \cdot X_L)(m^c), m^c, \exp(v \cdot X_R)(m^c)). \]
The advantage of this representation is that we get an explicit approximation of ${\mathcal C}_p^l (X_R^j)$ and ${\mathcal C}_p^r(X_L^i)$ in terms of commutators. In particular, since ${\mathcal C}_p^l (X_R^j) = {\mathcal C}_p^r(X_L^i) = 0$ when $u = v = 0$, by \eqref{leftcurv}, \eqref{rightcurv}, and Taylor's theorem, we must have that
\begin{align*}
 {\mathcal C}_{p(m^c,u,v)}^l (X_R^j) &  = \hphantom{-} [ u \cdot X_L, X_R^j] + O(||u||^2) \mbox{ and}  \\
 {\mathcal C}_{p(m^c,u,v)}^r (X_L^i) & = - [ X_L^i, v \cdot X_R] + O(||v||^2),
 \end{align*}
 where the error term $O(||u||^2)$, for example, must be real analytic and vanish quadratically as $u \rightarrow 0$.
 Thus, to first order, the terms whose square sums give ${\mathcal K}^2$ in \eqref{keq} are merely wedge products of $X_L$'s, $X_R$'s, and their first commutators. More precisely, when $i_1,\ldots,i_{\#S_L}$ enumerates $S_L$ and $j_1,\ldots,j_{\#S_R}$ enumerates $S_R$, we have
 \begin{align} 
{\mathbf X}_L \wedge {\mathbf X}_R \wedge  {\mathcal C}_{p(m^c,u,v)}^l {\mathbf X}_R^{S_R} & \wedge {\mathcal C}_{p(m^c,u,v)}^r {\mathbf X}_L^{S_L} = \nonumber \\ 
(-1)^{\#S_L} {\mathbf X}_L \wedge  {\mathbf X}_R \wedge & [u \cdot X_L, X_R^{j_1} ] \wedge \cdots \wedge[u \cdot X_L, X_R^{j_{\#S_R}}] \label{commapp} \\ & \wedge [ X_L^{i_1}, v \cdot X_R] \wedge \cdots \wedge [X_L^{i_{\#S_L}}, v \cdot X_R] \nonumber \\
  & + O ( ||u||^{\#S_L + 1} ||v||^{\#S_R}) + O( ||u||^{\#S_L} ||v||^{\#S_R+1}). \nonumber
 \end{align} 
 Using this calculation and the machinery already produced, to prove Theorem \ref{variable5}, it suffices to establish the following result:
\begin{lemma}
Assume that $\pi_L$ and $\pi_R$ are nondegenerate as in \eqref{leftid} and \eqref{rightid}. Let $A_j := \set{ x \in \R^2}{ 2^{j-1} \leq ||x|| < 2^j}$ as before. then for any compact set $K \subset \onesp$, there is a finite constant $C$ and an integer $j_0$ such that
\[ \left| \set{u \in A_i}{ |\Phi(p(m^c,u,v))| \leq \alpha} \right| \leq C \alpha 2^{i-j} \]
\[ \left| \set{ v \in A_j}{ |\Phi(p(m^c,u,v))| \leq \alpha} \right| \leq C \alpha 2^{-i+j} \]
for all $\alpha \geq 0$ and all $(m^c,u,v) \in K \times A_i \times A_j$ whenever $i,j \leq j_0$. Here $\Phi$ is the function defined by \eqref{phieq}.
\end{lemma}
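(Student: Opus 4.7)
The plan is to reduce the sublevel set estimates to a polynomial sublevel set problem in $(u, v) \in \R^2 \times \R^2$ via the commutator expansion \eqref{commapp}, and then to extract uniform lower bounds on $\mathcal{K}$ from the curvature hypotheses.

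First, the transversality $\ker d\pi_L \cap \ker d\pi_R = \{0\}$ forces $d\pi_R(X_L)$ and $d\pi_L(X_R)$ to be nonvanishing, which gives the comparability
\[ \dist(\pi^c(p(m^c, u, v)), \Pi(p(m^c, u, v))) \sim \|u\| + \|v\| \sim 2^{\max(i,j)} \]
uniformly for $m^c$ in the compact set $K$. I then use \eqref{commapp} to write
\[ \mathcal{K}(p(m^c, u, v))^2 = \sum_{j=1}^{2} T_j(u, v)^2 + \sum_{i=1}^{2} S_i(u, v)^2 + E(u, v), \]
where $T_j(u, v)$ is the wedge arising from $(\#S_L, \#S_R) = (2, 1)$ (bidegree $(1, 2)$ in $(u, v)$), $S_i(u, v)$ comes from $(\#S_L, \#S_R) = (1, 2)$ (bidegree $(2, 1)$), and $E$ collects terms of strictly higher bidegree, dominated by the leading sum whenever $\|u\|, \|v\| \leq 2^{j_0}$ for $j_0$ negative enough.

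Next I convert the nondegeneracy hypotheses into pointwise lower bounds. Writing $B(u, v) := [u \cdot X_L, v \cdot X_R]$ modulo $\ker d\pi_L + \ker d\pi_R$, hypothesis \eqref{leftid} forces the image of $B$ to span the full three-dimensional quotient: otherwise the plane $\Pi'(v) := \vspan(B(e_1, v), B(e_2, v))$ would be forced to equal this image, so $B(a, v') \in \Pi'(v)$ for every $a$, contradicting \eqref{leftid}. With $T_j(u, v) = \sum_k u_k\, B(e_k, e_j) \wedge B(e_1, v) \wedge B(e_2, v)$, the trivial identity $B(e_k, v) \wedge B(e_1, v) \wedge B(e_2, v) = 0$ shows the coefficient matrix of $u$ is rank one with kernel $\vspan(v)$, and by the spanning property it is nonzero for every $v \neq 0$. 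Homogeneity and compactness on $S^1$ then produce
\[ \sqrt{T_1^2 + T_2^2} \gtrsim \|v\|^2\, |\vec A(\hat v) \cdot u| \]
for a real-analytic unit vector-valued function $\vec A$ on $S^1$. The symmetric analysis via \eqref{rightid} gives $\sqrt{S_1^2 + S_2^2} \gtrsim \|u\|^2\, |\vec R(\hat u) \cdot v|$.

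The first estimate then follows by a slab argument. For fixed $v \in A_j$, the constraint $\Phi \leq \alpha$ yields both $|\vec A(\hat v) \cdot u| \lesssim \alpha\, 2^{\max(i,j)}/\|v\|^2$ and $|\vec R(\hat u) \cdot v| \lesssim \alpha\, 2^{\max(i,j)}/\|u\|^2$. When $j \geq i$, the first inequality alone places $u$ in a slab of width $\sim \alpha/\|v\| = \alpha\, 2^{-j}$, whose intersection with $A_i$ has area $\lesssim 2^i \cdot \alpha\, 2^{-j} = \alpha\, 2^{i-j}$. When $j < i$, the first inequality only confines $\hat u$ to within angular distance $\sim \alpha\, 2^{-2j}$ of $\pm\vec A(\hat v)^\perp$; the second inequality restricts $\hat u$ further through a first-order Taylor expansion of $\vec R(\hat u) \cdot v$ around $\hat u_0 := \vec A(\hat v)^\perp$, whose $\hat u$-derivative is generically $\sim \|v\| = 2^j$, producing an arc of angular measure $\lesssim \alpha\, 2^{-i-j}$. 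The resulting area in $A_i$ is $\lesssim 2^{2i} \cdot \alpha\, 2^{-i-j} = \alpha\, 2^{i-j}$. The second estimate follows by the entirely symmetric argument, swapping $u \leftrightarrow v$ and the roles of \eqref{leftid} and \eqref{rightid}.

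The main obstacle is the sub-case of the previous paragraph in which the angular derivative of $\vec R(\hat u) \cdot v$ at $\hat u_0$ is anomalously small for exceptional $v$. There the Lipschitz reduction loses sharpness and one must instead invoke higher-order vanishing estimates, which demand the real-analyticity hypothesis via a \L ojasiewicz-type inequality uniform on $K$, together with an argument ruling out the degenerate scenario in which this derivative vanishes identically (which I expect to be excluded a posteriori by a reinterpretation of the nondegeneracy hypotheses). These refinements are closely tied to the regularity results developed in Section~\ref{solsec}.
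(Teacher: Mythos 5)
Your reduction via the commutator expansion \eqref{commapp} and your extraction of lower bounds $\sqrt{T_1^2+T_2^2}\gtrsim \|v\|^2|\vec A(\hat v)\cdot u|$ and $\sqrt{S_1^2+S_2^2}\gtrsim\|u\|^2|\vec R(\hat u)\cdot v|$ are in the right spirit, but the argument has a genuine gap exactly where you flag one, and the proposed repair cannot work. In the case $j<i$ your $u$-estimate needs the angular derivative of $\hat u\mapsto\vec R(\hat u)\cdot v$ to be $\sim\|v\|$ near $\hat u_0$; where that derivative degenerates, a \L ojasiewicz-type bound only yields an arc of measure $\lesssim(\alpha 2^{-i}/\|v\|)^{1/k}$ for the order $k\geq 2$ of vanishing, which destroys the linear dependence on $\alpha$ that the lemma requires (and that sharpness forces). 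Nothing in \eqref{leftid}--\eqref{rightid} excludes this degeneration, and the other constraint $|\vec A(\hat v)\cdot u|\lesssim\alpha 2^{i-2j}$ does not rescue you on the exceptional arcs. The paper closes this case by a different mechanism: for fixed $v$ it chooses the unit vector $w=w(v)$ annihilating the wedge with $[w\cdot X_L,v^\perp\cdot X_R]$ and shows that the combination $w_1S_1+w_2S_2$ factors as $(u\cdot w)(u\cdot w^\perp)\|v\|^{-2}$ times a wedge that is nonvanishing by \eqref{leftid}; since $\nabla_u[(u\cdot w)(u\cdot w^\perp)]$ has norm exactly $\|u\|$, this gives a first-derivative lower bound $c\|u\|\,\|v\|$ valid \emph{everywhere}, which together with a second-derivative upper bound and a bounded cover of $A_i$ feeds a one-dimensional Fubini argument (this is \eqref{varsub1}); the $v$-estimate \eqref{varsub2} is extracted from the same family of terms via the companion vector $w'(u)$. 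Working with derivatives rather than pointwise lower bounds is also what lets the paper absorb the error terms in \eqref{commapp}: your claim that $E$ is ``dominated by the leading sum'' fails pointwise precisely on the zero set of the leading terms, which is where the sublevel set concentrates.

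A second, more local error: the identity $B(e_k,v)\wedge B(e_1,v)\wedge B(e_2,v)=0$ does not show that $\vspan(v)$ is the kernel of $u\mapsto T_j(u,v)$, because $T_j$ contains the factor $B(u,e_j)=[u\cdot X_L,X_R^j]$, whose \emph{second} slot is $e_j$, not $v$; setting $u=v$ produces $B(v,e_j)\wedge B(e_1,v)\wedge B(e_2,v)$, which has no reason to vanish unless the bracket form is symmetric (true for the translation-invariant quadratic model, false in the general setting of Theorem \ref{variable5}). The weaker statement you actually need --- that $u\mapsto(T_1,T_2)(u,v)$ is a nonzero linear map for every $v\neq0$, hence bounded below by $c\|v\|^2|\vec A(\hat v)\cdot u|$ for \emph{some} unit covector $\vec A(\hat v)$ --- does follow from \eqref{leftid}, but the identification $\vec A(\hat v)=\hat v^\perp$, and hence your choice of base point $\hat u_0$, is unjustified; this further undermines the hoped-for ``reinterpretation of the nondegeneracy hypotheses'' in the degenerate sub-case.
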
 

\begin{proof}
We must have $\#S_L + \#S_R = 3$, meaning that one set should have cardinality one and the other cardinality two. Both options are completely symmetric, so let us consider the case when $\#S_L = 2$.
For convenience below, given any vector $v := (v_1,v_2)$, we define $v^\perp := (-v_2,v_1)$. For any fixed $v \in \R^2$, let $w$ be any nonzero unit vector such that
\[ {\mathbf X}_L \wedge {\mathbf X}_R \wedge [ X_L^1, v \cdot X_R] \wedge [X_L^2, v \cdot X_R] \wedge [w \cdot X_L, v^{\perp} \cdot X_R] = 0 \]
(notice that $w$ is unique up to sign). It follows that
\begin{align*}
 {\mathbf X}_L & \wedge {\mathbf X}_R \wedge [u \cdot X_L, X_R^1] \wedge [u \cdot X_L, X_R^2] \wedge [w \cdot X_L, v \cdot X_R] \\
  = & ||v||^{-2}  {\mathbf X}_L \wedge {\mathbf X}_R \wedge [u \cdot X_L, v \cdot X_R] \wedge [u \cdot X_L, v^{\perp} \cdot X_R] \wedge [w \cdot X_L, v \cdot X_R] \\
  = & \frac{u \cdot w^\perp}{||v||^{2}}  {\mathbf X}_L \wedge {\mathbf X}_R \wedge [w^\perp \cdot X_L, v \cdot X_R] \wedge [u \cdot X_L, v^{\perp} \cdot X_R] \wedge [w \cdot X_L, v \cdot X_R] \\
  = & -\frac{u \cdot w^\perp}{||v||^{2}}  {\mathbf X}_L \wedge {\mathbf X}_R \wedge [X_L^1, v \cdot X_R] \wedge [u \cdot X_L, v^{\perp} \cdot X_R] \wedge [ X_L^2, v \cdot X_R] \\
  = & \frac{u \cdot w^\perp}{||v||^2} {\mathbf X}_L \wedge {\mathbf X}_R \wedge [ X_L^1, v \cdot X_R] \wedge [X_L^2, v \cdot X_R] \wedge [u \cdot X_L, v^{\perp} \cdot X_R]  \\
  = & \frac{(u \cdot w) (u \cdot w^\perp)}{||v||^2} {\mathbf X}_L \wedge {\mathbf X}_R \wedge [ X_L^1, v \cdot X_R] \wedge [X_L^2, v \cdot X_R] \wedge [w^{\perp} \cdot X_L, v^{\perp} \cdot X_R].
 \end{align*}
By \eqref{leftid}, the wedge product on the final line cannot be zero unless $u = w$ or $u = w^\perp$ (otherwise no such $a_{v,v'}$ could exist since any linear combination of $w$ and $w^\perp$ would return zero when put in the place of $a_{v,v'}$). In particular, the $u$-derivative of this expression is never zero. By compactness of the domain from which $(m^c,u,v)$ is drawn and the continuity of all vector fields, this means
\begin{align*}
& \left| \nabla_u ({\mathbf X}_L \wedge {\mathbf X}_R \wedge {\mathcal C}_{p(m^c,u,v)}^l X_R^1 \wedge {\mathcal C}_{p(m^c,u,v)}^r X_R^2 \wedge {\mathcal C}_{p(m^c,u,v)}^r X_L^1) \right| \\
& + \left| \nabla_u ({\mathbf X}_L \wedge {\mathbf X}_R \wedge {\mathcal C}_{p(m^c,u,v)}^l X_R^1 \wedge {\mathcal C}_{p(m^c,u,v)}^r X_R^2 \wedge {\mathcal C}_{p(m^c,u,v)}^r X_L^2) \right| \geq c ||u|| ||v||
\end{align*}
for some constant $c > 0$, provided $||u||$ and $||v||$ are sufficiently small. By \eqref{commapp}, we also have that there is a finite constant $C$ such that
\begin{align*}
& \left| \nabla_u^2 ({\mathbf X}_L \wedge {\mathbf X}_R \wedge {\mathcal C}_{p(m^c,u,v)}^l X_R^1 \wedge {\mathcal C}_{p(m^c,u,v)}^r X_R^2 \wedge {\mathcal C}_{p(m^c,u,v)}^r X_L^1) \right| \\
& + \left| \nabla_u^2 ({\mathbf X}_L \wedge {\mathbf X}_R \wedge {\mathcal C}_{p(m^c,u,v)}^l X_R^1 \wedge {\mathcal C}_{p(m^c,u,v)}^r X_R^2 \wedge {\mathcal C}_{p(m^c,u,v)}^r X_L^2) \right| \leq C ||v||
\end{align*}
for the same range of $m^c$, $u$, and $v$.  Thus for all sufficiently small annuli, for any fixed values of $m^c$ and $v$, we may cover the annulus $A_i$ by boundedly many balls (independent of the annulus, $m^c$, and $v$), on which there are indices $k_1$ and $k_2$ such that
\[ \left| \frac{\partial}{\partial u_{k_1}}  ({\mathbf X}_L \wedge {\mathbf X}_R \wedge {\mathcal C}_{p(m^c,u,v)}^l X_R^1 \wedge {\mathcal C}_{p(m^c,u,v)}^r X_R^2 \wedge {\mathcal C}_{p(m^c,u,v)}^r X_L^{k_2}) \right| \geq c' 2^{i} ||v||. \]
By the usual Fubini argument, it follows that
\begin{equation} \left| \set{u \in A_i}{ |{\mathcal K}(p(m^c,u,v))| \leq \alpha} \right| \leq C' \alpha 2^{-j} \label{varsub1} \end{equation}
uniformly as desired.

Next, if we let $w'$ be any unit vector such that 
\[ {\mathbf X}_L \wedge {\mathbf X}_R \wedge [u \cdot X_l, X_R^1] \wedge [u \cdot X_L, X_R^{2}] \wedge [u^\perp \cdot X_L, w'\cdot X_R] = 0, \]
we must have that
\begin{align*}
{\mathbf X}_L & \wedge {\mathbf X}_R \wedge [ u \cdot X_L, X_R^1] \wedge [u \cdot X_L, X_R^2] \wedge [ u^\perp X_L , v \cdot X_R] \\
& = (v \cdot (w')^\perp) {\mathbf X}_L \wedge {\mathbf X}_R \wedge [ u \cdot X_L, X_R^1] \wedge [u \cdot X_L, X_R^2] \wedge [ u^\perp X_L , (w')^\perp \cdot X_R ].
\end{align*}
Once again, with the exception of the coefficient $v \cdot (w')^\perp$ vanishing, the wedge product on the right-hand side cannot be zero without contradicting the existence of $b_{u,u^\perp}$ in \eqref{rightid}. Reasoning just as before, we find that
\begin{align*}
& \left| \nabla_v ({\mathbf X}_L \wedge {\mathbf X}_R \wedge {\mathcal C}_{p(m^c,u,v)}^l X_R^1 \wedge {\mathcal C}_{p(m^c,u,v)}^r X_R^2 \wedge {\mathcal C}_{p(m^c,u,v)}^r X_L^1) \right| \\
& + \left| \nabla_v ({\mathbf X}_L \wedge {\mathbf X}_R \wedge {\mathcal C}_{p(m^c,u,v)}^l X_R^1 \wedge {\mathcal C}_{p(m^c,u,v)}^r X_R^2 \wedge {\mathcal C}_{p(m^c,u,v)}^r X_L^2) \right| \geq c ||u||^2
\end{align*}
and
\begin{align*}
& \left| \nabla_v^2 ({\mathbf X}_L \wedge {\mathbf X}_R \wedge {\mathcal C}_{p(m^c,u,v)}^l X_R^1 \wedge {\mathcal C}_{p(m^c,u,v)}^r X_R^2 \wedge {\mathcal C}_{p(m^c,u,v)}^r X_L^1) \right| \\
& + \left| \nabla_v^2 ({\mathbf X}_L \wedge {\mathbf X}_R \wedge {\mathcal C}_{p(m^c,u,v)}^l X_R^1 \wedge {\mathcal C}_{p(m^c,u,v)}^r X_R^2 \wedge {\mathcal C}_{p(m^c,u,v)}^r X_L^2) \right| \leq C \frac{||u||^2}{||v||}
\end{align*}
(where the factor of $||v||^{-1}$ is easily obtained by bounding the second derivatives by $C||u||^2$ and then fixing an upper bound for $||v||$) and consequently that
\begin{equation} \left| \set{v \in A_j}{ |{\mathcal K}(p(m^c,u,v))| \leq \alpha} \right| \leq C' \alpha 2^{-2i-j}. \label{varsub2}
\end{equation}
Now if $i \geq j$, then $\Phi(p(m^c,u,v)) \approx 2^{-i} {\mathcal K}(p(m^c,u,v))$, so \eqref{varsub1} and \eqref{varsub2} imply the lemma by merely replacing $\alpha$ in \eqref{varsub1} and \eqref{varsub2} with $2^i \alpha$. The case $i \leq j$ is obtained in exactly the same manner by fixing $\#S_L = 1$ and $\#S_R = 2$ (and effectively interchanging the roles of $u$ and $v$).
\end{proof}

\subsection{The maximal complex quadratic submanifold}

In the case of the bilinear functional \eqref{b2d} corresponding to the operator \eqref{modelmaxml}, if we complexify the manifolds $\onesp, \Lsp, $ and $\Rsp$, we are naturally led to the following functional representing an integral over a quadratic four-dimensional submanifold of $\R^{10}$, which corresponds to the bilinear functional for \eqref{modelmaxmlc} written in real coordinates:
\begin{align}
 B (f,g) := \int_\Omega f \left( \vphantom{\frac{1}{2}} \right. \! \! x_1 + t_1, \ldots, x_4 + t_4, & x_5 + \frac{1}{2} (t_1^2 - t_2^2), x_6 + t_1 t_2, \nonumber  \\
   & x_7 + t_1 t_3 - t_2 t_4, x_8 + t_1 t_4 + t_2 t_3, \label{bcomplex} \\ 
   & x_9 + \frac{1}{2} (t_3^2 - t_4^2), x_{10} + t_3 t_4 \left. \vphantom{\frac{1}{2}} \! \! \right) g(x) dt dx. \nonumber
 \end{align}
 We take the following definitions of $X_L^i$ and $X_R^j$:
 \begin{align*}
 X_L^1 := & \frac{\partial}{\partial t_1} - \frac{\partial}{\partial x_1} - t_1 \frac{\partial}{\partial x_5} - t_2 \frac{\partial}{\partial x_6} - t_3 \frac{\partial}{\partial x_7} - t_4 \frac{\partial}{\partial x_8}, \\
X_L^2 := & \frac{\partial}{\partial t_2} - \frac{\partial}{\partial x_2} + t_2 \frac{\partial}{\partial x_5} - t_1 \frac{\partial}{\partial x_6} + t_4 \frac{\partial}{\partial x_7} - t_3 \frac{\partial}{\partial x_8}, \\
X_L^3 := & \frac{\partial}{\partial t_3} - \frac{\partial}{\partial x_3} - t_1 \frac{\partial}{\partial x_7} - t_2 \frac{\partial}{\partial x_8} - t_3 \frac{\partial}{\partial x_9} - t_4 \frac{\partial}{\partial x_{10}}, \\
X_L^4 := &  \frac{\partial}{\partial t_4} - \frac{\partial}{\partial x_4} + t_2 \frac{\partial}{\partial x_7} - t_1 \frac{\partial}{\partial x_8} + t_4 \frac{\partial}{\partial x_9} - t_3 \frac{\partial}{\partial x_{10}}, \\
X_R^j := & \frac{\partial}{\partial t_j}, \ j=1,\ldots,4.
\end{align*}
In this case, we need not even sum over all $S_L$ and $S_R$ in \eqref{keq} to estimate $\mathcal K$ from below well enough to establish boundedness of the sublevel set operators \eqref{sublevels}. In particular, we need only sum over those $S_L$ and $S_R$ in which the indices $1$ and $2$ are either both omitted or occur simultaneously as a pair, and likewise with the indices $3$ and $4$. This gives all the determinants a block complex structure which is easy to evaluate: if $z_{jk} = a_{jk} + i b_{jk}$, where $i$ is the imaginary unit, then 
\[ \det \left[ \begin{array}{rrcrr} a_{11} & -b_{11} & \cdots & a_{1n} & -b_{1n} \\
b_{11} & a_{11} & \cdots & b_{1n} & a_{1n} \\
\vdots & \vdots & \ddots & \vdots & \vdots \\
a_{n1} & -b_{n1} & \cdots & a_{nn} & -b_{nn} \\
b_{n1} & a_{n1} & \cdots & b_{nn} & a_{nn}
\end{array} \right] = \left| \det \left[ \begin{array}{ccc} z_{11} & \cdots & z_{1n} \\ \vdots & \ddots & \vdots \\ z_{n1} & \cdots & z_{nn} \end{array} \right] \right|^2. \]
In particular, this reduction brings us back to the same calculations encountered in \eqref{keq2d}; the only difference is that the entries of the various determinants are allowed to be complex. The end result is that the sublevel set operators $W_{\alpha,m^c}(g,f)$ are dominated (modulo the multiplication of $\alpha$ by a constant factor) by the complexified  sublevel set operator
\[ \int_{{\C}^2 \times {\C}^2} \chi_{ | z_1 w_2 - z_2 w_1|^2 \leq \alpha} f(z_1,z_2) g(w_1,w_2) \left| d z_1 \wedge d \overline{z_1} \wedge \cdots \wedge d w_2 \wedge d \overline{w_2} \right|, \]
where $z_j$ and $w_j$ are now, of course, complex. If $A_j \subset \C^2$ is the complex annulus $\set{(z_1,z_2) \in \C^2}{ 2^{j-1} \leq \sqrt{ |z_1|^2 + |z_2|^2} < 2^{j}}$, then just as in \eqref{msublev1} and \eqref{msublev2}, 
\begin{align*} \int \chi_{ | z_1 w_2 - z_2 w_1|^2 \leq \alpha} & \chi_{E^r \cap A_k} (z_1,z_2) \chi_{E^l \cap A_j} (w_1,w_2) \left| d z_1 \wedge d \overline{z_1} \wedge \cdots \wedge d w_2 \wedge d \overline{w_2} \right| \\
& \leq C \min \{ 2^{2j-2k} \alpha |E^r \cap A_k|, 2^{-2j + 2k} \alpha |E^l \cap A_j |, \}
\end{align*}
which gives just as in the real case that the bilinear functional \eqref{bcomplex} satisfies \[ |B(f,g)| \leq C ||f||_{q_L} ||g||_{q_R} \]
whenever $(\frac{1}{q_L},\frac{1}{q_R})$ belongs to the interior of the triangle with vertices $(1,0),$ $(0,1)$, and $(\frac{5}{8}, \frac{5}{8})$. Just as in the previous section, the constraint that $\Omega$ in \eqref{bcomplex} be compact can be relaxed by H\"{o}lder to establish boundedness of \eqref{modelmaxmlc}.

\subsection{The 3D harmonic quadratic surface in $\R^8$ and generalizations}

In the case of the bilinear functional corresponding to \eqref{modelharmonic}, we have
\begin{align*}  B(f,g) := \int_{\Omega} f & \left( x_1 + t_1,x_2 + t_2, x_3 + t_3,  x_4 + \frac{t_1^2}{2} - \frac{t_2^2}{2}, \right. \\ 
& \left. \ \ x_5 + \frac{t_2^2}{2} - \frac{t_3^2}{2}, x_6 +  t_1 t_2, x_7 +  t_2 t_3, x_8 +  t_1 t_3 \right) g(x) dx dt.
\end{align*}
Calculating just as before using the vector fields
\begin{align*}
X_L^1 & := \frac{\partial}{\partial t_1} - \frac{\partial}{\partial x_1} + t_1 \frac{\partial}{\partial x_4} + t_2 \frac{\partial}{\partial x_6} + t_3 \frac{\partial}{\partial}{x_8}, \\
X_L^2 &:= \frac{\partial}{\partial t_2} - \frac{\partial}{\partial x_2} - t_2 \frac{\partial}{\partial x_4} + t_2 \frac{\partial}{\partial x_5} + t_1 \frac{\partial}{\partial x_6} + t_3 \frac{\partial}{\partial}{x_7}, \\
X_L^3 &:= \frac{\partial}{\partial t_3} - \frac{\partial}{\partial x_3} - t_3 \frac{\partial}{\partial x_5} + t_2 \frac{\partial}{\partial x_7} + t_1 \frac{\partial}{\partial}{x_8}, \\
X^j_R & := \frac{\partial}{\partial t_j}, \ j = 1,2,3,
\end{align*}
we come to the conclusion that the function $\Phi$ given by \eqref{phieq} governing the relevant sublevel set functional is given by the formula
\[\Phi (p) =  ||t^l - t^c||^2 ||t^r - t^c||^2 - ((t^l - t^c) \cdot (t^r - t^c))^2. \]
Also in agreement with previous cases, we may assume without loss of generality that $t^c = 0$ and we let $A_i$ be the annulus $\set{ t \in \R^3}{ 2^{i-1} \leq ||t|| < 2^i}$. For fixed, nonzero $y \in \R^3$, we observe that the set
\[ \set{x \in\R^3}{ ||x||^2 ||y||^2 - (x \cdot y)^2 \leq \alpha} \]
consists of exactly those points at a distance $\alpha^{1/2} ||y||^{-1}$ to the line through the origin with direction $y$. Therefore using Fubini's theorem exactly as in the earlier case of the 2-dimensional surface in $\R^5$, we conclude that
\begin{align*}
\int_{ ||t^l||^2 ||t^r||^2 - (t^l \cdot t^r)^2 \leq \alpha} &  \chi_{E^l \cap A_i}(t^l) \chi_{E^r \cap A_j}(t^r) dt^l dt^r \\
& \leq C \alpha \min\{ 2^{i - 2j} |E^r \cap A_j|, 2^{-2i + j} |E^l \cap A_i| \},
\end{align*}
which implies by \eqref{marc} with $\theta = \frac{1}{2}$ that
\begin{align*} \int_{ ||t^l||^2 ||t^r||^2 - (t^l \cdot t^r)^2 \leq \alpha} &  \chi_{E^l}(t^l) \chi_{E^r} (t^r) dt^l dt^r \\ & \leq C' \alpha \left( \sum_{i \in \Z} 2^{-i} |E^l \cap A_i| \right)^{\frac{1}{2}} \left( \sum_{j \in \Z} 2^{-j} |E^r \cap A_j| \right)^{\frac{1}{2}}. 
\end{align*}
This inequality, by itself, does not immediately imply boundedness of \eqref{sublevels}, but the desired inequality follows from the observation that
\begin{equation} \left( \sum_{i \in \Z} 2^{-i} |E^l \cap A_i| \right)^{\frac{1}{2}} \leq C'' |E^l|^{\frac{1}{3}} \label{wrtwum} \end{equation}
and likewise for the sum over $j$. The observation itself is a consequence of the fact that $|E^l \cap A_i| \lesssim 2^{3i}$; breaking the sum into parts $i \leq i_0$ and $i > i_0$ and estimating separately gives
\[ \sum_{i \leq i_0} 2^{-i} |E^l \cap A_i| \lesssim \sum_{i \leq i_0} 2^{-i} 2^{3i} \lesssim 2^{2 i_0}, \]
\[ \sum_{i > i_0} 2^{-i} |E^l \cap A_i| \leq 2^{-i_0} \sum_{i \in \Z} |E^l \cap A_i| \leq 2^{-i_0} |E^l|. \]
Therefore
\[\left( \sum_{i \in \Z} 2^{-i} |E^l \cap A_i| \right)^{\frac{1}{2}} \lesssim \left( 2^{2i_0} + 2^{-i_0} |E^l| \right)^{\frac{1}{2}}. \]
Optimizing the choice of $i_0$ gives \eqref{wrtwum}.
Consequently
\begin{align*}
\int_{ ||t^l||^2 ||t^r||^2 - (t^l \cdot t^r)^2 \leq \alpha} &  \chi_{E^l}(t^l) \chi_{E^r}(t^r) dt^l dt^r \leq C \alpha |E^l|^{\frac{1}{3}} |E^r|^{\frac{1}{3}}
\end{align*}
and
\[ | B(f,g)| \lesssim ||f||_{q_L} ||g||_{q_R} \]
for $(\frac{1}{q_L}, \frac{1}{q_R})$ in the interior of the triangle with vertices $(1,0), (0,1),$ and $(\frac{8}{13}, \frac{8}{13})$. Using H\"{o}lder's inequality as in previous cases to allow $\Omega := \R^{8} \times [-1,1]^3$ and then applying Marcinkiewicz interpolation gives the boundedness of \eqref{modelharmonic} as asserted by Theorem \ref{modelthm}.


\subsection{Uneven half-dimensional averages and generalizations}
The final calculations deal with the operator \eqref{modelasym} and its geometric variants described by Theorem \ref{variableasym}.
In the case of \eqref{modelasym}, the relevant bilinear functional is given by
\[ B(f,g) := \int_{\Omega} f(x, y + t x) g( t, y) dx dt dy. \]
Here $\ell = d_R$ and $d_L = 1$. Using Theorem \ref{bilinearthm}, we have that
\[ \Phi_Q(t,x) := \frac{ \mathrm{Vol}( \{ t e_i \}_{i=1}^{d_L}, \{x\})}{ (|x|^2 + |t|^2)^{\frac{1}{2}}} \approx \frac{\max \{ |t|^{d_R}, |x| |t|^{d_R -1} \}}{(|x|^2 + |t|^2)^{\frac{1}{2}}} \approx |t|^{d_R-1}. \]
The corresponding sublevel set inequality we come to is simply that
\[ \int_{|t|^{d_R-1} \leq \alpha} \chi_{E^l}(t) \chi_{E^r}(x) dx dt \lesssim \alpha^{\frac{1}{d_R - 1}} |E^r|. \]
This estimate is trivial to obtain by integrating over $t$ first. Since we also have $|B(f,g)| \leq C ||f||_1 ||g||_\infty$ and $|B(f,g)| \leq C ||f||_\infty ||g||_1$, it follows that
\[ |B(f,g)| \leq C ||f||_{q_L} ||g||_{q_R} \]
provided that $(\frac{1}{q_L}, \frac{1}{q_R})$ belongs to the interior of the triangle with vertices $(1,0)$, $(0,1)$, and
$( \frac{d_R +1}{d_R+2}, \frac{2}{d_R+2})$.

Regarding Theorem \ref{variableasym}, 
the calculation proceeds in much the same fashion as the proof of Theorem \ref{variable5}. In particular, we continue to use the coordinate system
\[ p(m^c,u,v) := (\exp(u \cdot X_L) m^c, m^c, \exp(v \cdot X_R) m^c). \]
In the definition \eqref{keq} of $\mathcal K$, we must have either $\#S_L = 0, \#S_R = d_R$ or $\#S_L = 1, \#S_R = d_R - 1$. In the former case, using \eqref{commapp}, we have
\begin{align}
X_L & \wedge {\mathbf X}_R \wedge {\mathcal C}^l_{p(m^c,u,v)} {\mathbf X}_R  \nonumber \\
 & = u^{d_R} X_L \wedge {\mathbf X}_R \wedge [X_L, X_R^1] \wedge \cdots \wedge [X_L,X_R^{d_R}] + O(|u|^{d_R+1}), \label{asym1}
\end{align}
and in the latter case we must have that
\begin{align}
X_L  \wedge {\mathbf X}_R \wedge   {\mathcal C}^l_{p(m^c,u,v)} & {\mathbf X}_R^{\{1,\ldots,\widehat{j},\ldots,d_R\}} \wedge {\mathcal C}_{p(m^c,u,v)}^r (X_L)  \nonumber \\
  = -u^{d_R-1} & X_L \wedge {\mathbf X}_R \wedge [X_L, X_R^1] \wedge \nonumber \\  \cdots & \wedge \widehat{[X_L,X_R^j]} \wedge \cdots \wedge [X_L,X_R^{d_R}] \wedge [ X_L, v \cdot X_R] \nonumber \\
  & + O(|u|^{d_R}) + O(|u|^{d_R-1} ||v||) \nonumber \\
  =  (-1)^{d_R-j+1} &  v_j u^{d_R-1}  X_L \wedge {\mathbf X}_R\wedge [X_L, X_R^1] \wedge  \cdots  \wedge [X_L,X_R^{d_R}] \label{asym2} \\
  & + O(|u|^{d_R}) + O(|u|^{d_R-1} ||v||). \nonumber
\end{align}
Assuming the nondegeneracy condition
\[ X_L \wedge {\mathbf X}_R \wedge [X_L, X_R^1] \wedge \cdots \wedge [X_L,X_R^{d_R}] \neq 0, \]
 when we sum the squares of \eqref{asym1} and \eqref{asym2} (in the latter case, summing over $j$), we must have that
\[ \Phi(p(m^c,u,v)) \geq C |u|^{d_R-1} \]
provided that $|u|$ and $||v||$ are sufficiently small. Thus the sublevel set estimate follows exactly as in the model case.

\section{Appendix: Regularity of Fiber Measures}
\label{solsec}


In this section, we take up the last remaining technical issue, namely that the measure $\mu_0$, defined by \eqref{densityeq}, on the nondegenerate part of the fibers of $\Pi$, satisfy the regularity condition
\begin{equation} \mu_0(B_r(x)) \lesssim r^{\kappa} \label{regasmp} \end{equation}
for all $r$ sufficiently small and all $x$ in some compact subset of $\Lsp \times \Rsp$. This is also the only place in this paper where the real analyticity assumption becomes important; in particular, if one could establish \eqref{regasmp} directly by other means (which seems likely to be possible in the cases of Theorems \ref{variable5} and \ref{variableasym}), then real analyticity would no longer be necessary.

The interesting feature of \eqref{regasmp} is that, under the assumption of real analyticity, the estimate is closer to a counting statement than it is to an integral estimate. Thus, in switching from the Inverse Function Theorem to the Implicit Function Theorem in the method of refinements, while B\'{e}zout's theorem is no longer directly applicable, the results of this section which take its place are not so different in spirit. The proof of \eqref{regasmp} begins with a general lemma which highlights the fundamentally discrete nature of integrals over fibers.
\begin{lemma}
Let ${\mathcal M}$, ${\mathcal X}$, and ${\mathcal Z}$ be smooth manifolds of dimension $n$, $k$, $n-k$, respectively. Suppose that ${\mathcal Z}$ is equipped with a measure of smooth density $\mu_{\mathcal Z}$. If $\Pi : {\mathcal M} \rightarrow {\mathcal X}$ and  $\rho : {\mathcal M} \rightarrow {\mathcal Z}$ are smooth maps, then for any $x \in {\mathcal X}$ and any Borel measurable set $E \subset {\mathcal M}$ such that $d \Pi$ is surjective at all points $m \in E$,
\[ \int_{\Pi^{-1} (x)} \chi_E ~ \rho^*( d \mu_{\mathcal Z}) = \int_{\rho(E)} N_E(x,z) d \mu_{\mathcal Z} (z) \]
where $N_E(x,z)$ is the number of solutions $m \in E$ of the system $\Pi(m) = x$, $\rho(m) = z$ at which $\ker d \Pi|_m \cap \ker d \rho|_m$ is trivial. \label{countlem}
\end{lemma}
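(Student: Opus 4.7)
The plan is to recognize the claim as an instance of the classical area formula applied to the restriction $\rho|_{\Pi^{-1}(x)}$, which becomes a smooth map between two manifolds of the same dimension $n-k$. First I would invoke the Implicit Function Theorem: at each $m \in E$, the surjectivity of $d \Pi|_m$ yields local coordinates $(u,v) \in \R^{k} \times \R^{n-k}$ around $m$ in which $\Pi$ is the projection $(u,v) \mapsto u$, so $\Pi^{-1}(x) \cap U$ is an embedded $(n-k)$-dimensional submanifold in a neighborhood $U$ of $m$. In particular, $E \cap \Pi^{-1}(x)$ may be written, by Lindel\"{o}f's property, as the disjoint union of countably many Borel pieces lying inside such coordinate patches, and $\rho$ restricts to a smooth map from each patch into ${\mathcal Z}$.

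Second, I would identify the restriction of $\rho^*(d \mu_{\mathcal Z})$ to $\Pi^{-1}(x)$ with the density $(\rho|_{\Pi^{-1}(x)})^*\, d \mu_{\mathcal Z}$ on the fiber. This is a pointwise computation: the tangent space to the fiber at $m$ is $\ker d\Pi|_m$, so the relevant Jacobian factor is $|\det d \rho|_{\ker d\Pi|_m}|$, which is strictly positive precisely when $d \rho|_m$ is injective on $\ker d\Pi|_m$, equivalently when $\ker d\Pi|_m \cap \ker d \rho|_m = \{0\}$. Thus the triviality condition in the definition of $N_E$ isolates exactly the regular points of $\rho|_{\Pi^{-1}(x)}$, and the pullback density vanishes at the critical points.

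Third, I would apply the classical area formula for smooth maps between equidimensional manifolds to $\rho|_{\Pi^{-1}(x)}$ and the Borel set $E \cap \Pi^{-1}(x)$, which gives
\[ \int_{\Pi^{-1}(x)} \chi_E \, \rho^*(d \mu_{\mathcal Z}) = \int_{\mathcal Z} \#\{ m \in E : \Pi(m) = x, \ \rho(m) = z, \ m \text{ regular}\} \, d \mu_{\mathcal Z}(z). \]
By Sard's theorem applied to $\rho|_{\Pi^{-1}(x)}$, the image of the critical set has $\mu_{\mathcal Z}$-measure zero, so restricting the count to regular preimages agrees $\mu_{\mathcal Z}$-almost everywhere with $N_E(x,z)$, and the domain of integration may be replaced by $\rho(E)$. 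Borel measurability of $z \mapsto N_E(x,z)$ follows from the standard construction of the area-formula counting function via a countable decomposition into pieces on which $\rho|_{\Pi^{-1}(x)}$ is a diffeomorphism.

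The main technical obstacle is purely bookkeeping: carrying out the patch-by-patch reduction so that the Euclidean area formula applies, and verifying that the set where $\rho|_{\Pi^{-1}(x)}$ is singular can be safely ignored (via Sard) without disturbing either side. Notably, the proof needs only $C^\infty$ regularity; the real analyticity assumption invoked elsewhere in the paper will enter only when the lemma is subsequently used to obtain the quantitative estimate \eqref{regasmp}, by controlling $N_E$ uniformly through bounds of B\'{e}zout type.
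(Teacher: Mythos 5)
Your proposal is correct and follows essentially the same route as the paper: Implicit Function Theorem to realize the fiber, identification of the nondegeneracy condition $\ker d\Pi|_m \cap \ker d\rho|_m = \{0\}$ with regularity of $\rho|_{\Pi^{-1}(x)}$, Sard's theorem to discard the critical set, and a local change of variables summed over a countable cover. The only cosmetic difference is that you package the last step as the classical equidimensional area formula, whereas the paper carries it out by hand via the Inverse Function Theorem and a partition of unity.
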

\begin{proof}
Without loss of generality, we may assume that $d \Pi$ is surjective at every point of ${\mathcal M}$. The Implicit Function Theorem guarantees that $\Pi^{-1}(x)$ is a smooth $(n-k)$-dimensional submanifold (when nonempty). Since $E$ is Borel, the restriction of $E$ to $\Pi^{-1}(x)$ will also be Borel. The mapping $\rho$, when restricted to the submanifold $\Pi^{-1}(x)$ will have surjective differential at exactly those points $m$ at which $\ker d \Pi|_m \cap \ker d \rho|_m$ is trivial. By Sard's Lemma, the integral over the image under $\rho$ of the complement of this set (namely, the set where the intersection of kernels is nontrivial) will have $\mu_{\mathcal Z}$-measure zero, so we may assume without loss of generality that $E$ also does not contain any such points. When the differential $d \rho$ is surjective, $\rho$ is locally bijective and the Inverse Function Theorem may be applied. In particular, for any point $m \in E$ and any sufficiently small open set $U$ containing $m$, we must have that
\[ \int_{\Pi^{-1}(x)} \varphi_U \chi_{E \cap U} \rho^*( d \mu_{\mathcal Z}) = \int_{\rho(U)}  \varphi_{U}(\rho^{-1}(z)) \chi_{E \cap U} (\rho^{-1}(z)) d \mu_{\mathcal Z}(z) \]
for any smooth function $\varphi_U$ supported on $U$. Taking $\varphi_U$ to be elements of a partition of unity on ${\mathcal M}$ subordinate to the neighborhoods $U$ and summing over $U$ gives the conclusion of the lemma.
\end{proof}
In light of Lemma \ref{countlem}, the usefulness of real analyticity comes in to sharp focus: fundamental work of Gabrielov \cite{gabrielov1968} establishes that, when that the manifolds ${\mathcal M}, {\mathcal Z}$, and ${\mathcal X}$ and mappings $\Pi$ and $\rho$ are all real analytic and $E$ is contained in a fixed compact set $K$, $N_{E}(x,z)$ is uniformly bounded by some constant $N$ independent of $E$, $x$ and $z$. Thus the integral of $\rho^*(d \mu_{\mathcal Z})$ over partial fibers $\Pi^{-1}(x) \cap E$ is controlled by a bounded constant times the $\mu_{\mathcal Z}$-measure of the projection $\rho(E)$.

To apply this insight to the case of $\mu_0$, let $U_\Lsp \subset \Lsp$ and $U_\Rsp \subset \Rsp$ be open sets and let $\rho_\Lsp : U_{\Lsp} \rightarrow \R^{n_L}$ and $\rho_\Rsp : U_{\Rsp} \rightarrow \R^{n_R}$ be coordinate charts satisfying 
\[ \dist(x_L,x_L') \approx || \rho_\Lsp(x_L) - \rho_\Lsp(x_L')|| \mbox{ and } \dist(x_R,x_R') \approx || \rho_\Rsp(x_R) - \rho_\Rsp(x_R')|| \]
for some finite implicit constants and every $x_L, x_L' \in U_\Lsp$ and $x_R,x_R' \in U_\Rsp$. For convenience, let $U_\onesp := \pi_L^{-1} U_L \cap \pi_R^{-1} \cap U_R$.  If we also define
\[ \rho_0(m) :=  (\rho_\Lsp \circ \pi_L(m), \rho_\Rsp \circ \pi_R(m)), \]
then $\mu_\onesp$ being subordinate to the Riemannian measure on $\mathcal M$ means in particular that there is a uniform constant $C$ such that
\begin{equation}
\begin{split}
 \mu_{\onesp} &  ( V_1,\ldots,V_{n_L + n_R - \ell}) \\
 & \leq C \left[ \sum_{\#S = \ell} \left| \det \left[ e_{S_1} , \ldots, e_{S_\ell}, d \rho_0 ( V_1),\ldots,d \rho_0 (V_{n_L + n_R - \ell}) \right] \right|^2 \right]^\frac{1}{2}
 \end{split} \label{mdens}
 \end{equation}
 for any vectors $V_1,\ldots,V_{n_L + n_R - \ell}$ at any point in $U_\onesp$, 
where $S$ ranges over all cardinality $\ell$ subsets of any orthonormal basis of $\R^{n_L + n_R}$.

Now for any choice $\sigma$ of a $\kappa$-dimensional subset of $\R^{n_L + n_R}$ spanned by a subset of the basis vectors $e_k$, let
 $\rho_\sigma : (\threesp \cap U_\onesp^3) \rightarrow \R^\kappa$ be defined by the map
\[ \rho_\sigma(m^l,m^c,m^r) := P_\sigma \rho_0 ( \pi^c (m^l,m^c,m^r)), \] 
 where $P_\sigma$ is projection onto the span of the unit vectors determined by $\sigma$. If $d {\mathcal Z}$ is the Lebesgue measure on $\R^\kappa$, we must have the following identity for the pullback measure $\rho_\sigma^*(d {\mathcal Z})$ on the incidence manifold $\threesp \cap U_\onesp^3$:
\begin{equation} 
\begin{split}
\rho_\sigma^* & (d {\mathcal Z}) ( V_1,\ldots,V_\kappa) \\
 & = \left| \det \left[ e_{i_1} \ldots, e_{i_{n_L + n_R - \kappa}}, d \rho_0 d \pi^c(V_1),\ldots, d \rho_0 d \pi^c (V_\kappa) \right] \right|,
 \end{split} \label{pullb}
\end{equation}
where $e_{i_1},\ldots,e_{i_{n_L+n_R-\kappa}}$ are precisely those basis vectors not belonging to $\sigma$.
In comparing \eqref{pullb} to the right-hand side of \eqref{mdens}, note that $\ell \leq n_L + n_R - \kappa$, so that there are more elements of the orthonormal basis appearing on the right-hand side of \eqref{pullb}.

Recall the definition \eqref{densityeq} of the density $\mu_0$. We may assume that the constant $C_\inc$ is smooth and nonvanishing. Likewise we may assume that $\inc_p^l X^j_R$, $\inc_p^r X_L^i$, and $\inc_p v$ are all smooth and have bounded norms. At any particular point, we may also assume that the span of these vectors is equal to the span of some collection of orthonormal basis vectors of the same cardinality (since if this were not the case the density $\mu_0$ would trivially vanish). Therefore we have the inequality
\[ \mu_0({\mathbf P}) \leq C' \sum_{\sigma} \rho_\sigma^*(d {\mathcal Z}) ({\mathbf P}) \]
when ${\mathbf P}$ is any decomposable $\kappa$-multivector field generated by fibers of $\Pi$.
In particular, we must have the integral inequality
\begin{align*}
\mu_0(B_r& (x_L,x_R)) = \\
 & \int_{\Pi^{-1}(x_L,x_R) \cap (U_\onesp)^3}  \chi_{(\dist(\pi^c_L(m),x_L))^2 + (\dist(\pi^c_R(m),x_R))^2 \leq r^2} d \mu_0(m) \\
 & \leq C \sum_{\sigma} \int_{\Pi^{-1}(x_L,x_R) \cap (U_\onesp)^3} \chi_{\dist(\rho_\sigma^*(m), P_\sigma(x_L,x_R)) \leq r} \rho_\sigma^*(d {\mathcal Z}).
 \end{align*}
The right-hand side can now be estimated by Lemma \ref{countlem}.  If we assume that the number of nondegenerate solutions in $\threesp \cap U_{\mathcal M}^3$ of the system
\[ \Pi(m) = x, \rho_\sigma (m) = z \]
is bounded for all $m$, which by a result of Gabrielov \cite{gabrielov1968} will be the case when the closure of $U_{\mathcal M}$ is compact and the manifolds and mappings are all real analytic, then it must follow that
\[  \int_{\Pi^{-1}(x_L,x_R) \cap (U_\onesp)^3}  \chi_{(\dist(\pi^c_L(m),x_L))^2 + (\dist(\pi^c_R(m),x_R))^2 \leq r^2} d \mu_0(m) \lesssim r^\kappa \]
since the projection of the set $(\dist(\pi^c_L(m),x_L))^2 + (\dist(\pi^c_R(m),x_R))^2 \leq r^2$ via $\rho_\sigma$ is contained in a Euclidean ball of radius comparable to $r$.


\bibliography{mybib}

\end{document}